\documentclass{amsart}
\usepackage{amscd,amsmath,amssymb,amsfonts,bbm}
\usepackage[T1]{fontenc}
\usepackage{lmodern}

\usepackage{mathtools}
\usepackage{enumerate}
\usepackage{multicol}
\usepackage[all]{xy}

\usepackage{footmisc}

\newtheorem{thm}{Theorem}[section]
\newtheorem{prop}[thm]{Proposition}
\newtheorem{lem}[thm]{Lemma}
\newtheorem{cor}[thm]{Corollary}

\theoremstyle{definition}

\newtheorem{quest}[thm]{Question}

\theoremstyle{remark}

\numberwithin{equation}{section}

\DeclareMathOperator{\Br}{Br}

 \DeclareMathOperator{\Spec}{Spec}

\DeclareMathOperator{\Ker}{Ker}

\DeclareMathOperator{\Ima}{Im}

\DeclareMathOperator{\R}{\mathbf{R}}
\DeclareMathOperator{\C}{\mathbf{C}}
\DeclareMathOperator{\s}{\mathbf{S}}

\DeclareMathOperator{\Gal}{Gal}
\DeclareMathOperator{\Z}{\mathbb{Z}}
\DeclareMathOperator{\nr}{nr}
\DeclareMathOperator{\CH}{CH}

\begin{document}

\title[On Hilbert's 17\textsuperscript{th} problem in low degree]
{On Hilbert's 17\textsuperscript{th} problem in low degree}
\author{Olivier Benoist}
\address{Institut de Recherche Math\'ematique Avanc\'ee\\
UMR 7501, Universit\'e de Strasbourg et CNRS\\
7 rue Ren\'e Descartes\\
67000 Strasbourg, FRANCE}
\email{olivier.benoist@unistra.fr}

\renewcommand{\abstractname}{Abstract}
\begin{abstract}

 Artin solved Hilbert's 17\textsuperscript{th} problem, proving that a real polynomial in $n$ variables that is positive semidefinite is a sum of squares of rational functions, and Pfister showed that only $2^n$ squares are needed. 


 In this paper, we investigate situations where Pfister's theorem may be improved. We show that a real polynomial of degree $d$ in $n$ variables that is positive semidefinite is a sum of $2^n-1$ squares of rational functions if $d\leq 2n-2$. If $n$ is even or equal to $3$ or $5$, this result also holds for $d=2n$.
\end{abstract}
\maketitle


\section*{Introduction}\label{intro}

\subsection{Hilbert's 17\textsuperscript{th} problem}

Let $\R$ be a real closed field, for instance the field $\mathbb{R}$ of real numbers, and let $n\geq 1$. A polynomial $f\in \R[X_1,\dots,X_n]$ is said to be positive semidefinite, if $f(x_1,\dots, x_n)\geq 0$ for all $x_1,\dots,x_n\in \R$.  As an odd degree polynomial changes sign, such a polynomial has even degree.

In \cite{Artin17}, Artin answered Hilbert's 17\textsuperscript{th} problem by proving that a positive semidefinite polynomial $f\in \R[X_1,\dots,X_n]$ is a sum of squares of rational functions\footnote{Hilbert himself \cite{Hilbert} had given examples of positive semidefinite polynomials that are not sums of squares of polynomials.}.
This theorem was later improved by Pfister \cite[Theorem 1]{Pfister17} who showed that it is actually the sum of $2^n$ squares of rational functions.
We refer to \cite[Chapter 6]{Pfisterbook} for a nice account of these classical results.

In two variables, the situation is very well understood. Hilbert \cite{Hilbert} has shown that 
a positive semidefinite polynomial $f\in \R[X_1,X_2]$ of degree $\leq 4$ is a sum of $3$ squares of rational functions\footnote{In fact, in this  exceptional case, Hilbert actually showed that squares of polynomials suffice. We will not consider this question in what follows, and refer the interested reader to \cite{PfiSch}.}, and Cassels, Ellison and Pfister \cite{CEP} have given an example of a positive semidefinite polynomial $f\in \R[X_1,X_2]$ of degree $6$ that is not a sum of $3$ squares of rational functions.

Our goal is to prove an analogue of Hilbert's result -- that in low degree, less squares are needed -- in more than two variables:

\begin{thm}\label{main}
Let $f\in \R[X_1,\dots,X_n]$ be a positive semidefinite polynomial of degree $d$. Suppose that one of the following holds:
\begin{enumerate}[(i)]
\item $d\leq 2n-2$.
\item $d=2n$, and either $n$ is even, or $n=3$, or $n=5$. 
\end{enumerate}
Then $f$ is a sum of $2^n-1$ squares in $\R(X_1,\dots,X_n)$.
\end{thm}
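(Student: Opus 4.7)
The plan is to translate the statement into the isotropy of an explicit quadratic form over $K := \R(X_1,\dots,X_n)$, and then to establish this isotropy by analyzing the geometry of an associated double cover of $\mathbb{P}^n$. Setting $\phi_f := (2^n-1)\langle 1\rangle \perp \langle -f\rangle$, a form of dimension $2^n$, the theorem is equivalent to the isotropy of $\phi_f$ over $K$. A direct computation in the Witt ring gives $\phi_f \otimes \langle\langle f\rangle\rangle = \langle\langle -1,\dots,-1,f\rangle\rangle$, the $(n+1)$-fold Pfister form whose hyperbolicity is equivalent to $f$ being a sum of $2^n$ squares. By Artin's theorem and Pfister's theorem, this is automatic for positive semidefinite $f$. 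Hence $\phi_f$ is annihilated by $\langle\langle f\rangle\rangle$ in $W(K)$, so $\phi_f$ lies in the image of the Scharlau transfer $s_* \colon W(L) \to W(K)$ where $L := K(\sqrt{f})$.

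To bring in geometry, I would homogenize $f$ and consider the double cover $\pi \colon Y \to \mathbb{P}^n_\R$ branched along $\{\bar f = 0\}$, realized as $\{y^2 = \bar f\} \subset \mathbb{P}(1,\dots,1,d/2)$. After a small perturbation of $f$ (which can be reversed at the end by a specialization argument), $Y$ is smooth of dimension $n$, with $\R(Y) = L$, and adjunction gives $K_Y = \mathcal{O}_Y(d/2 - n - 1)$: thus $Y$ is Fano of index $n+1 - d/2$, namely of index $\geq 2$ in case (i), and of index $1$ in case (ii). The obstruction to descending the isotropy of $\phi_f$ from $L$ to $K$ can then be formulated as the vanishing of an unramified class attached to $Y$ in a suitable cohomology or Witt group.

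For case (i), $Y$ is strongly Fano (index $\geq 2$); the vanishing of the obstruction should follow from known vanishing results for unramified cohomology of Fano varieties (in the spirit of work of Colliot-Th\'el\`ene, Voisin, Pirutka, and others), together with a real-geometric analysis exploiting the psd hypothesis on $f$ to handle contributions from real orderings. For case (ii), $Y$ has Fano index $1$, and this is the main obstacle: the obstruction is controlled by the middle-degree Hodge cohomology of $Y(\mathbb{C})$ and its real structure, and its vanishing requires specific Hodge-theoretic input. When $n$ is even, Hodge symmetry permits Lefschetz-type arguments on even-weight Hodge structures to conclude; when $n = 3$, one uses the intermediate Jacobian of the Fano threefold $Y$ and known results on the Abel--Jacobi map; when $n = 5$, specific low-dimensional constructions (Fano varieties of linear subspaces on $Y$, or Prym-type phenomena) should again yield the vanishing. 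For other odd $n \geq 7$, Hodge-theoretic tools of this kind are not known to suffice, which is precisely why such cases are excluded from the statement.
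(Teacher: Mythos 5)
There is a genuine gap, and it begins with a sign. The quadratic extension your Witt-ring computation produces is the wrong one: from $\phi_f\otimes\langle 1,-f\rangle=0$ in $W(K)$ you place $\phi_f$ in the image of the transfer from $W(K(\sqrt{f}))$, and accordingly you take the double cover $\{y^2=\bar f\}$. But $K(\sqrt f)$ is a formally real field (of infinite level) and that cover has a Zariski-dense set of real points, so no obstruction of the kind you need lives on it; knowing $\phi_f=s_*(\psi)$ for some $\psi$ over $K(\sqrt f)$ says nothing about isotropy of $\phi_f$ over $K$. The operative reduction (Pfister; \cite[Chap.~11, Thm.~2.7]{Lam}) is that $f$ is a sum of $2^n-1$ squares in $K$ if and only if the \emph{level} of $K(\sqrt{-f})$ is $<2^n$, which is why the paper works with $Y=\{Z^2+F=0\}$: for this twist a resolution of $Y$ has no $\R$-points, its function field has finite level, and (via Elman--Lam and Voevodsky) the condition becomes the vanishing of $\omega^n$ in $H^n_{\nr}(\widetilde Y,\mathbb{Z}_2(n))$, i.e.\ a coniveau statement for a class pulled back from $\Spec\R$.

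The second, larger gap is that the heart of the argument is replaced by an appeal to ``known vanishing results for unramified cohomology of Fano varieties,'' which do not exist in the relevant range: the obstruction sits in degree $n=\dim Y$, is $2$-torsion, and comes from the base field $\R$, so neither the $\mathbb{Q}$-coefficient vanishing over $\C$ nor low-degree results apply. What the paper actually does is (a) use Bloch--Ogus theory plus the Bloch--Kato input to show that, because $\CH_0(Y_{\C})$ is supported in dimension $n-1$ (rational connectedness, i.e.\ $d\le 2n$), the coniveau~$\ge 1$ statement for $\omega^{n}$ is \emph{equivalent} to a coniveau~$\ge 2$ statement for $\omega^{n+1}$; and (b) verify the latter by explicit computations on the double cover, degenerating to the Fermat cover and comparing with the anisotropic quadric $\{Z^2+T_0^2+\cdots+T_n^2=0\}$ via a semialgebraic connectedness argument. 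The resulting dichotomy is governed by $d\bmod 4$ (when $d\equiv 0\ [4]$ one shows $\omega^{n+1}=0$ outright), not by the Fano index being $\ge 2$ versus $=1$ as in your case split; and for $n$ odd with $d=2n$ the residual issue is a purely geometric coniveau-$2$ statement for the ``half'' class $\alpha$ with $2\alpha=H_{\C}^{(n+1)/2}$ on $Y_{\C}$, settled for $n=3$ by exhibiting a line and for $n=5$ by Voisin's degeneration of the variety of lines --- not by intermediate Jacobians or Hodge symmetry. Your closing specialization step (perturb $f$ to make $F$ smooth over a larger real closed field, then specialize the sum-of-squares identity back via a valuation) does match the paper's Section~\ref{s6}.
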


Of course, when $d=2$, the classification of quadratic forms over $\R$ shows the much stronger result that $n+1$ squares are enough. However, to the best of our knowledge, our theorem is already new for $d=4$ and $n\geq 3$.

\subsection{Dependence on the degree}

 The question whether the bound $2^n$ in Pfister's aforementioned theorem is optimal is natural and well-known \cite[\S 4 Problem 1]{PfisterICM}. It is often formulated in the following equivalent way, where the Pythagoras number $p(K)$ of a field $K$ is the smallest number $p$ such that every sum of squares in $K$ is a sum of $p$ squares:

\begin{quest}\label{qPyt}
Do we have $p(\R(X_1,\dots,X_n))=2^n$ ?
\end{quest}

 When $n\geq 2$, the best known result is that $n+2\leq p(\R(X_1,\dots,X_n))\leq 2^n$ \cite[p. 97]{Pfisterbook}, where the upper bound is Pfister's theorem and the lower bound is an easy consequence of the Cassels-Ellison-Pfister theorem.

Our main theorem does not address this question directly: it explores the opposite direction, that is the values of the degree for which Pfister's bound may be improved.
However, Theorem \ref{main} gives insights into Question \ref{qPyt}. The bound $d\leq 2n$ has a natural geometric origin (it reflects the rational connectedness of an associated algebraic variety), and it would be natural to expect that Theorem \ref{main} cannot be extended to degrees $d\geq 2n+2$. 

In view of Theorem \ref{main}, it is natural to ask whether the bound $d\leq 2n-2$ may be improved to $d\leq 2n$ for every odd value of $n$. When $n=1$, this is not the case because $X_1^2+1$ is not a square. On the other hand, when $n\geq 3$ is odd, we reduce this question to a  geometric coniveau estimate (Proposition \ref{conditional}). When $n=3$, it is very easy to check. We also verify it when $n=5$, following an argument of Voisin. This explains the hypotheses on the degree in Theorem \ref{main}.


\subsection{Strategy of the proof}

In two variables, the theorems of Hilbert and Cassels-Ellison-Pfister quoted above have received geometric proofs by Colliot-Th\'el\`ene in \cite[Remark 2]{CTrealrat} and \cite{CTNL}. His idea is to consider the homogenization $F$ of $f$ and to introduce the algebraic surface  $Y:=\{Z^2+F=0\}$. Then, whether or not $f$ may be written as a sum of three squares in $\R(X_1,X_2)$ depends on the injectivity of the map $\Br(\R)\to \Br(\R(Y))$, which may be studied by geometrical methods.

We follow the same strategy in more variables. Proposition \ref{sumlevel} and Proposition \ref{levelnr} translate the property that $f$ is a sum of $2^n-1$ squares in $\R(X_1,\dots,X_n)$ into a cohomological property of (a resolution of singularities of) the variety $Y$. The group that plays a role analogous to that of the Brauer group in two variables is a degree $n$ unramified cohomology group. 

It remains to show that, when the degree of $f$ is small, some class in a degree $n$ unramified cohomology group vanishes.  This is more difficult than the corresponding result in two variables, as these groups are harder to control than Brauer groups. Our main tool to achieve this is Bloch-Ogus theory.

\subsection{Structure of the paper}

  The first two sections gather general cohomological results 
for varieties over $\R$, that are used throughout the text.
  It will be very important for us to use cohomology with integral coefficients (as opposed to $2$-torsion coefficients). For this reason, section \ref{s1} is devoted to general properties of the $2$-adic cohomology\footnote{It would also have been possible to work with equivariant Betti cohomology over the field $\mathbb{R}$ of real numbers \cite{Krasnov}, and with its semi-algebraic counterpart over a general real closed field.} of varieties over $\R$.
 
 In section \ref{s2}, we recall the basics of Bloch-Ogus theory, then focus on the specific properties of it over real closed fields.
In particular, we adapt to our needs a strategy of Colliot-Th\'el\`ene and Scheiderer \cite{CTS} to compare the Bloch-Ogus theory of a variety over $\R$ and over the algebraic closure $\C$ of $\R$, and explain in our context consequences of the Bloch-Kato conjectures discovered by Bloch and Srinivas \cite{BS} and extended by Colliot-Th\'el\`ene and Voisin \cite{CTV}.


In section \ref{s3}, we study when a positive semidefinite polynomial $f\in \R[X_1,\dots, X_n]$ is a sum of $2^n-1$ squares of rational functions.
We successively relate this property to the level of the function field $\R(Y)$ of the variety $Y:=\{Z^2+F=0\}$ in Proposition \ref{sumlevel} (this is due to Pfister), to degree $n$ unramified cohomology of $Y$ in Proposition \ref{levelnr} (an important tool is Voevodsky's solution  to the Milnor conjecture \cite{Voevodsky}) and to degree $n+1$ cohomology of $Y$ in Proposition \ref{nn+1} (this is the crucial step, that uses Bloch-Ogus theory, and where the rational connectedness of $Y$ plays a role).

Section \ref{s4} contains the cohomological computations on the variety $Y$ that are relevant to apply the results of section \ref{s3}. The last paragraph \ref{calculcohod2n} will only be useful when $n$ is odd and $d=2n$, and is complemented by a geometric coniveau estimate in Section \ref{s5}. The reader who is not interested in our partial and conditional results when $n\geq 3$ is odd and $d=2n$ may skip them.


Section \ref{s6} completes the proof of Theorem \ref{main}. For a generic choice of $f$ (that is when the degree of $f$ is maximal among the values allowed in the statement of Theorem \ref{main}, and $Y$ is a smooth variety), this is an immediate consequence of the results obtained so far.
In general, we do not know how to apply this argument directly, because we do not have a good control on the geometry of (a resolution of singularities of) $Y$. Instead, we rely on a specialization argument. This argument reduces Theorem \ref{main} to the generic case, but over a bigger real closed field. In particular, even if one is only interested in proving Theorem \ref{main} over $\mathbb{R}$, one has to work over real closed fields that are not necessarily archimedean.

\bigskip

{\it Acknowledgements.} I have benefited from numerous discussions with Olivier Wittenberg, that have shaped my understanding of the cohomology of real algebraic varieties, and have been very important for the completion of this work. 

I am grateful to Claire Voisin for explaining to me the coniveau computation contained in Section \ref{s5}, that allowed to deal with the $n=5$ and $d=10$ case of Theorem \ref{main}.

\section{Cohomology of real varieties}\label{s1}

Let $\R$ be a real closed field and $\C$ be an algebraic closure of $\R$. We will denote by $G:=\Gal(\C/\R)\simeq\mathbb{Z}/2\mathbb{Z}$ the Galois group. A variety over $\R$ is a separated scheme of finite type over $\R$.

\subsection{$2$-adic cohomology}

If $X$ is a variety over $\R$, we denote by $H^k(X,\mathbb{Z}/2^r\mathbb{Z}(j))$ its \'etale cohomology groups. These cohomology groups are finite: this follows from the Hochschild-Serre spectral sequence $$E_2^{p,q}=H^p(G,H^q(X_{\C},\mathbb{Z}/2^r\mathbb{Z}(j)))\Rightarrow H^{p+q}(X,\mathbb{Z}/2^r\mathbb{Z}(j))$$ using that $X_{\C}$ has finite cohomological dimension \cite[X Corollaire 4.3]{SGA43}, that the groups $H^q(X_{\C},\mathbb{Z}/2^r\mathbb{Z}(j))$ are finite \cite[XVI Th\'eor\`eme 5.1]{SGA43} and that a finite $G$-module has finite cohomology.

Let us define $H^k(X,\mathbb{Z}_2(j)):=\varprojlim_r H^k(X,\mathbb{Z}/2^r\mathbb{Z}(j))$. Since the Galois cohomology of finite $G$-modules is finite, \cite[(3.5) c)]{Jannsen} shows that these groups coincide with the continuous \'etale cohomology groups defined by Jannsen. In particular, we have a Hochschild-Serre spectral sequence \cite[(3.5) b)]{Jannsen}:
\begin{equation}\label{HS}
E_2^{p,q}=H^p(G,H^q(X_{\C},\mathbb{Z}_2(j)))\Rightarrow H^{p+q}(X,\mathbb{Z}_2(j)).
\end{equation}

We will also use freely the cup-products, cohomology groups with support, cycle class maps and Gysin morphisms defined by Jannsen \cite{Jannsen}.

Note that since $G=\mathbb{Z}/2\mathbb{Z}$, the sheaves $\mathbb{Z}/2^r\mathbb{Z}(j)$ only depend on the parity of $j$, hence so do all the cohomology groups considered above.

Let $\omega$ be the generator of $H^1(\R,\mathbb{Z}_2(1))\simeq\mathbb{Z}/2\mathbb{Z}$. We will denote as well by $\omega$ its reduction modulo $2$: the generator of $H^1(\R,\mathbb{Z}/2\mathbb{Z})\simeq\mathbb{Z}/2\mathbb{Z}$ . If $k\geq 1$, their powers $\omega^k$ generate $H^k(\R,\mathbb{Z}_2(k))\simeq\mathbb{Z}/2\mathbb{Z}$ and $H^k(\R,\mathbb{Z}/2\mathbb{Z})\simeq\mathbb{Z}/2\mathbb{Z}$, and we will still denote by $\omega^k$ their pull-backs to any variety $X$ over $\R$.

\subsection{Comparison with geometric cohomology.}\label{parrc}

Let $\pi:\Spec(\C)\to \Spec(\R)$ be the base-change morphism, and fix $j\in\mathbb{Z}$. There is a natural short exact sequence of \'etale sheaves on $\Spec(\R)$: $0\to\mathbb{Z}/2^r\mathbb{Z}(j)\to \pi_*\mathbb{Z}/2^r\mathbb{Z}\to\mathbb{Z}/2^r\mathbb{Z}(j+1)\to 0$, as one checks at the level of $G$-modules. They fit together to form a short exact sequence of $2$-adic sheaves on $\Spec(\R)$: $0\to\mathbb{Z}_2(j)\to \pi_*\mathbb{Z}_2\to\mathbb{Z}_2(j+1)\to 0$.

Let $X$ be a variety over $\R$, and let us still denote by $\pi:X_{\C}\to X$ the base-change morphism. Notice that by the Leray spectral sequence, $H^k(X, \pi_*\mathbb{Z}/2^r\mathbb{Z})=H^k(X_{\C},\mathbb{Z}/2^r\mathbb{Z})$. Now pull-back the exact sequence of $2$-adic sheaves $0\to\mathbb{Z}_2(j)\to \pi_*\mathbb{Z}_2\to\mathbb{Z}_2(j+1)\to 0$ on $X$ and take continuous \'etale cohomology. We obtain a long exact sequence: 
\begin{alignat}{2}\label{rc}
\dots\to H^k(X,\mathbb{Z}_2(j))\stackrel{\pi^*}{\longrightarrow}& H^k(X_{\C},\mathbb{Z}_2)\\
&\stackrel{\pi_*}{\longrightarrow}H^k(X,\mathbb{Z}_2(j+1))\stackrel{\omega}{\longrightarrow} H^{k+1}(X,\mathbb{Z}_2(j))\to\dots \nonumber
\end{alignat}
in which the boundary map $H^k(X,\mathbb{Z}_2(j+1))\to H^{k+1}(X,\mathbb{Z}_2(j))$ is the cup-product by the class of the extension $0\to\mathbb{Z}_2(j)\to \pi_*\mathbb{Z}_2\to\mathbb{Z}_2(j+1)\to 0$, that is the non-zero class $\omega\in H^1(G,\mathbb{Z}_2(1))\simeq\mathbb{Z}/2\mathbb{Z}$.


\subsection{Cohomological dimension}

Recall first the following well-known statement, that goes back to Artin:
\begin{prop}\label{realff}
Let $X$ be an integral variety over $\R$. The following are equivalent:
\begin{enumerate}[(i)]
\item $\R(X)$ is formally real, that is $-1$ is not a sum of squares in $\R(X)$.
\item $X$ has a smooth $\R$-point.
\item $X(\R)$ is Zariski-dense in $X$.
\end{enumerate}
\end{prop}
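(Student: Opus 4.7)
The plan is to prove the cyclic chain of implications (iii) $\Rightarrow$ (ii) $\Rightarrow$ (i) $\Rightarrow$ (iii). The first implication is essentially immediate: $\Sing(X)$ is a proper closed subset of $X$, so if $X(\R)$ is Zariski-dense it cannot be contained in $\Sing(X)$, hence meets the smooth locus.

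For (ii) $\Rightarrow$ (i), I would use a valuation-theoretic argument. Given a smooth $\R$-point $x$ of $X$, pick a regular system of parameters $t_1,\dots,t_n$ at $x$ (where $n=\dim X$). Iteratively localizing $\mathcal{O}_{X,x}$ at the generic points of the successive smooth subvarieties cut out by $t_n$, then $t_{n-1}$, etc., produces a rank-$n$ valuation $v$ on $\R(X)$ with value group $\mathbb{Z}^n$ (lexicographically ordered, hence torsion-free) and residue field $\R$. Assume for contradiction that $-1=\sum_i f_i^2$ in $\R(X)$ and set $v_0 := \min_i v(f_i)$. Because the residue field $\R$ is formally real, the leading terms of a sum of squares of nonzero elements of valuation $v_0$ cannot cancel, so $v(\sum_i f_i^2)=2v_0$. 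Comparing with $v(-1)=0$ and using that the value group is torsion-free forces $v_0=0$, and reducing modulo the maximal ideal produces $-1$ as a sum of squares in $\R$, a contradiction.

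The heart of the matter is (i) $\Rightarrow$ (iii), which I would deduce from the Tarski--Seidenberg transfer principle. Assuming that $\R(X)$ is formally real, it admits an ordering extending the ordering of $\R$ (since $\R$ is real closed), so passing to the real closure yields an $\R$-embedding $\R(X)\hookrightarrow L$ into a real closed field $L$. The composite $\Spec(L)\to \Spec(\R(X))\to X$ is an $L$-point landing at the generic point of $X$, hence it lies on any given non-empty open subset $U\subset X$. To deduce an $\R$-point, reduce to a basic affine open of the form $V(f_1,\dots,f_r)\cap D(g)\subset \mathbf{A}^N_\R$: the existence of such a point with coordinates in a real closed extension of $\R$ is a first-order sentence in the language of ordered fields, so its truth over $L$ implies its truth over $\R$ by the model completeness of the theory of real closed fields. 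Consequently $X(\R)$ meets every non-empty open of $X$, which is precisely Zariski-density.

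The main obstacle is the last implication (i) $\Rightarrow$ (iii), which genuinely relies on the model theory of real closed fields (equivalently, on the Artin--Lang homomorphism theorem); the other two implications are formal once one has the valuation in (ii) $\Rightarrow$ (i) and the observation that the smooth locus is a dense open in (iii) $\Rightarrow$ (ii). A minor point to check in (ii) $\Rightarrow$ (i) is that the iterative construction of $v$ is well defined, which follows from the fact that each successive subvariety $\{t_n=\cdots=t_{n-k+1}=0\}$ is regular at $x$ of dimension $n-k$.
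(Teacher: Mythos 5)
Your proof is correct, but it is organized differently from the paper's, and two of the three implications are established by genuinely different means. The paper proves the two equivalences (i)$\Leftrightarrow$(iii) and (ii)$\Leftrightarrow$(iii) separately: (i)$\Rightarrow$(iii) is the Artin--Lang homomorphism theorem quoted as a black box, (iii)$\Rightarrow$(i) is the one-line observation that an identity $-1=\sum_i f_i^2$ could be evaluated at a real point avoiding all poles, (ii)$\Rightarrow$(iii) is the semialgebraic implicit function theorem, and (iii)$\Rightarrow$(ii) is trivial. You instead close the single cycle (iii)$\Rightarrow$(ii)$\Rightarrow$(i)$\Rightarrow$(iii). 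Your (i)$\Rightarrow$(iii), via an ordering of $\R(X)$, its real closure and the Tarski--Seidenberg transfer principle, is essentially the standard proof of Artin--Lang, so it amounts to unfolding the paper's citation rather than replacing it. The genuinely different ingredient is your (ii)$\Rightarrow$(i): the rank-$n$ composite valuation attached to a flag of regular subschemes through a smooth $\R$-point, with torsion-free value group and residue field $\R$, along which a presentation of $-1$ as a sum of squares specializes (after normalizing by a term of minimal value, using that sums of nonzero squares in $\R$ do not vanish) to such a presentation in $\R$. This is a purely algebraic substitute for the implicit function theorem: it never produces a dense set of real points near $x$, only the formal reality of $\R(X)$, which is all you need to launch the transfer argument. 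The trade-off is that the paper's route is shorter given the two standard references to \cite{BCR}, while yours is closer to self-contained (modulo model completeness of real closed fields) and avoids semialgebraic topology entirely.
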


\begin{proof}
By the Artin-Lang homomorphism theorem \cite[Theorem 4.1.2]{BCR}, if (i) holds, every open affine subset of $X$ contains a $\R$-point, proving (iii). Conversely, if $X(\R)$ were Zariski-dense in $X$, $-1$ could not be a sum of squares in $\R(X)$, because we would get a contradiction by evaluating this identity at an $\R$-point outside of the poles of the rational functions that appear. That (ii) implies (iii) is a consequence of the implicit function theorem \cite[Corollary 2.9.8]{BCR}, and the converse is trivial.
\end{proof}

  From this proposition, it is possible to deduce estimates on the cohomological dimension of varieties $X$ over $\R$ without $\R$-points. For the cohomological dimension of $\R(X)$, this follows from a theorem of Serre \cite{Serreprofini} and Artin-Schreier theory. The cohomological dimension of an arbitrary variety $X$ may then be controlled using \cite[X Corollaire 4.2]{SGA43}.

Here, we rather point out places in the literature where the statements we need are explicitly formulated.

\begin{prop}\label{cd}
Let $X$ be an integral variety of dimension $n$ over $\R$ such that $X(\R)=\varnothing$. 
\begin{enumerate}[(i)]
\item $\R(X)$ has cohomological dimension $n$.
\item $X$ has \'etale cohomological dimension $\leq 2n$.
\item If $X$ is affine, $X$ has \'etale cohomological dimension $\leq n$.
\end{enumerate}
\end{prop}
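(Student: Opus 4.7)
The plan is to establish each statement from known cohomological dimension results in the literature, using Proposition \ref{realff} as the bridge between the geometric hypothesis $X(\R) = \varnothing$ and the algebraic statement that $\R(X)$ is not formally real. For (i), I would combine Artin-Schreier theory with Serre's theorem: the absence of orderings on $\R(X)$ forces the $2$-cohomological dimension of $\R(X)$ to be finite, and passing to the quadratic extension $K := \R(X)(\sqrt{-1})$, which is a finitely generated extension of $\C$ of transcendence degree $n$, Serre's theorem \cite{Serreprofini} yields $\mathrm{cd}(K) = n$. A standard restriction-corestriction argument together with the preceding finiteness then gives $\mathrm{cd}(\R(X)) = n$.

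For (ii), I would apply the general bound of \cite[X Corollaire 4.2]{SGA43} controlling the \'etale cohomological dimension of a noetherian scheme of finite Krull dimension by $\dim X$ plus the supremum of the cohomological dimensions of its residue fields. Since $X(\R) = \varnothing$, every point $x \in X$ has residue field equal to the function field of the integral subvariety $\overline{\{x\}}$, which still has no $\R$-point and has dimension at most $n$; by (i) its function field has cohomological dimension at most $n$, while closed points contribute $\kappa(x) = \C$ of cohomological dimension $0$. This gives $\mathrm{cd}(X) \leq n + n = 2n$. For (iii), the improvement in the affine case requires a different input: one combines Artin's vanishing theorem over $\C$, which yields $\mathrm{cd}(X_\C) \leq n$, with the fact that $X(\R) = \varnothing$ collapses the high-degree Galois contribution through the Hochschild-Serre spectral sequence; a clean statement over a general real closed field is available in Scheiderer's book on real and \'etale cohomology, and as an alternative one may cover $X$ by affines each satisfying (iii) and iterate Mayer-Vietoris.

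The main obstacle here is bibliographic rather than mathematical, exactly as the author signals by announcing that he will only point out references. Once (i) is in place via Serre and Artin-Schreier, (ii) is a formal consequence of residue-field reasoning, while (iii) is the only genuinely delicate input and is the one for which care must be taken to find a statement valid over an arbitrary real closed field, not only over $\mathbb{R}$.
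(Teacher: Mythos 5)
Your proposal is correct in substance and, for parts (i) and (ii), follows the ``first-principles'' route that the paper itself sketches in the paragraph preceding the proposition but then deliberately sidesteps: the paper simply cites \cite[Proposition 1.2.1]{CTParimala} (attributed to Ax) for (i), and for (ii) and (iii) invokes \cite[Corollary 7.21]{Scheiderer} together with the emptiness of the real spectrum (via Proposition \ref{realff}) and the bounds $\mathrm{cd}(X_{\C})\leq 2n$, resp.\ $\leq n$ in the affine case, from SGA4. Your argument for (i) is the intended one, but be careful about the mechanism: restriction-corestriction is useless for a subgroup of index $2$ at the prime $2$. What you actually need is Serre's theorem that a profinite group without $p$-torsion has the same $\mathrm{cd}_p$ as any open subgroup; Artin--Schreier supplies the torsion-freeness of $G_{\R(X)}$ from the failure of formal reality, and the finiteness of $\mathrm{cd}_2(\R(X))$ is the \emph{output} of that theorem, not a separate input. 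Your residue-field argument for (ii) via \cite[X Corollaire 4.2]{SGA43} is fine (note that every integral closed subvariety $Z\subset X$ inherits $Z(\R)=\varnothing$, so (i) applies to each $\kappa(x)$), and it is a genuinely different, more self-contained derivation than the paper's appeal to Scheiderer.

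For (iii), however, the phrase about the Hochschild--Serre spectral sequence ``collapsing the high-degree Galois contribution'' is not an argument: since $G=\mathbb{Z}/2\mathbb{Z}$ has infinite cohomological dimension, the $E_2$-page has nonzero columns in arbitrarily high total degree, and no formal degeneration gives $\mathrm{cd}(X)\leq n$. The substantive input really is Scheiderer's comparison between \'etale cohomology and the cohomology of the real spectrum, which kills everything above $\mathrm{cd}(X_{\C})$ precisely because $X_r=\varnothing$; since you do defer to Scheiderer's book for the clean statement, your proof of (iii) coincides with the paper's. The proposed Mayer--Vietoris alternative is circular for (iii) itself ($X$ is already affine), though it could serve as yet another route from (iii) to (ii).
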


\begin{proof}
 The first statement  is \cite[Proposition 1.2.1]{CTParimala}, where it is attributed to Ax.

The second (resp. third) statement follows from \cite[Corollary 7.21]{Scheiderer}, noticing that the real spectrum of $X$ is empty by Proposition \ref{realff} and using that $X_{\C}$ has \'etale cohomological dimension $\leq 2d$ (resp. $\leq d$) by \cite[X Corollaire 4.3]{SGA43} (resp. \cite[XIV Corollaire 3.2]{SGA43}).
\end{proof}

\section{Bloch-Ogus theory}\label{s2}

\subsection{Gersten's conjecture}\label{parBO}
In this paragraph, let $X$ be a smooth variety over $\R$.

  We want to apply Bloch-Ogus theory to the cohomology groups $H^k(X,\mathbb{Z}_2(j))$. For this purpose, one needs to check the validity of Gersten's conjecture for this cohomology theory. There are two ways to do so.

  First, the formal properties of continuous \'etale cohomology proven by Jannsen \cite{Jannsen} allow to prove that associating to a variety $X$ over $\R$ its continuous \'etale cohomology groups $H^k(X,\mathbb{Z}_2(j))$ is part of a Poincar\'e duality theory with supports in the sense of Bloch-Ogus \cite[Definition 1.3]{BO}, in the same way as it is proven for \'etale cohomology with finite coefficients by Bloch and Ogus in \cite[\S 2]{BO}. Then, it is possible to apply \cite[Theorem 4.2]{BO}.

  Another possibility is to use the axioms of \cite{BOG}, that are easier to check. That these axioms hold for continuous \'etale cohomology is explained for instance in \cite[\S 3C]{Kahn}, allowing to apply \cite[Corollary 5.1.11]{BOG}.

\vspace{1em}

  Let us now explain the meaning
of Gersten's conjecture in our context. Let us define $\mathcal{H}^k_X(j)$ to be the Zariski sheaf on $X$ that is the sheafification of $U\mapsto H^k(U,\mathbb{Z}_2(j))$. Moreover, if $z\in X$ is a point with closure $Z\subset X$, we define\footnote{
Beware that since continuous \'etale cohomology does not commute with inverse limit of schemes, this group does not coincide in general with the continuous Galois cohomology of the residue field of $z$.} \begin{equation}\label{cohinj}
H^k_{\to}(z,\mathbb{Z}_2(j)):=\varinjlim_{U\subset Z}H^k(U,\mathbb{Z}_2(j)),
\end{equation}
where $U$ runs over all nonempty open subsets of $Z$. We define $\iota_z:z\to X$ to be the inclusion, and we will consider the skyscraper sheaves $\iota_{z_{*}}H^k_{\to}(z,\mathbb{Z}_2(j))$ on $X$. 
Finally, we set $X^{(c)}$ to be the set of codimension $c$ points in $X$.
 Then the sheaves $\mathcal{H}^k_X(j)$ admit Cousin resolutions (see either \cite[(4.2.2)]{BO}, or \cite[Corollary 5.1.11]{BOG} taking into account purity \cite[(3.21)]{Jannsen} to obtain the precise form below):
\begin{alignat}{2}
0\to\mathcal{H}^k_X(j)&\to\bigoplus_{z\in X^{(0)}}\iota_{z_{*}}H^k_{\to}(z,\mathbb{Z}_2(j))\to\bigoplus_{z\in X^{(1)}}\iota_{z_{*}}H^{k-1}_{\to}(z,\mathbb{Z}_2(j-1))\label{Cousin}\\
&\to\dots\to\bigoplus_{z\in X^{(k)}}\iota_{z_{*}}H^0_{\to}(z,\mathbb{Z}_2(j-k))\to 0. \nonumber
\end{alignat}
The way this Cousin resolution is constructed, from a coniveau spectral sequence, shows that the arrows in (\ref{Cousin}) are given by maps in long exact sequences of cohomology with support, also called residue maps.

Since the sheaves in this resolution are flasque, the Cousin complex obtained by taking its global sections computes the Zariski cohomology of $\mathcal{H}^k_X(j)$.  For instance, this implies that
$H^0(X,\mathcal{H}^k_X(j))$ coincides with the unramified cohomology group $H^k_{\nr}(X,\mathbb{Z}_2(j))$, that is the subgroup of $H^k_{\to}(\eta,\mathbb{Z}_2(j))$ on which all residues at codimension $1$ points of $X$ vanish.



The exactness of (\ref{Cousin}) allows to compute the second page of the coniveau spectral sequence for $X$ mentioned above. As shown in \cite[Corollary 6.3]{BO} or \cite[Corollary 5.1.11]{BOG}, it reads:
\begin{equation}\label{coniveau}
E_2^{p,q}=H^p(X,\mathcal{H}_X^q(j))\Rightarrow H^{p+q}(X,\mathbb{Z}_2(j)).
\end{equation}
Recall that the filtration induced by this spectral sequence on $H^{k}(X,\mathbb{Z}_2(j))$ is the coniveau filtration, where a class $\alpha\in H^{k}(X,\mathbb{Z}_2(j))$ has coniveau $\geq c$ if it vanishes in the complement of a closed subset of codimension $c$ of $X$.


\subsection{Bloch-Ogus theory over $\R$.}







If $X$ is a variety over $\R$, we still denote by $\pi:X_{\C}\to X$ the natural morphism, and we view naturally $X_{\C}$ as a variety over $\R$. The following proposition was proved in \cite[Lemma 2.2.1]{CTS} over $\mathbb{R}$ and with $2$-torsion coefficients, but the proof goes through, and we include it for completeness.

\begin{prop}\label{propCTS}
Let $X$ be a smooth variety over $\R$ and fix $j\in\mathbb{Z}$. Then there exists a long exact sequence of Zariski sheaves on $X$:
\begin{equation}
\dots\to \mathcal{H}_X^k(j)\to \pi_* \mathcal{H}_{X_{\C}}^k\to \mathcal{H}_X^k(j+1)\to \mathcal{H}_X^{k+1}(j)\to\dots
\label{borc}
\end{equation}
Moreover, the sheaf $\pi_* \mathcal{H}_{X_{\C}}^k$ coincides with the sheafification of $U\mapsto H^k(U_{\C},\mathbb{Z}_2)$ and its cohomology groups are $H^q(X,\pi_* \mathcal{H}_{X_{\C}}^k)=H^q(X_{\C}, \mathcal{H}_{X_{\C}}^k)$ for any $k,q\geq 0$.
\end{prop}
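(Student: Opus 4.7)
The plan is to sheafify the long exact sequence \eqref{rc} on Zariski open subsets of $X$. Applying \eqref{rc} to every Zariski open $U\subset X$ (with its base change morphism $U_{\C}\to U$) yields, functorially in $U$, a long exact sequence of Zariski presheaves
\[\cdots\to H^k(-,\mathbb{Z}_2(j))\to H^k((-)_{\C},\mathbb{Z}_2)\to H^k(-,\mathbb{Z}_2(j+1))\to H^{k+1}(-,\mathbb{Z}_2(j))\to\cdots.\]
Since Zariski sheafification is exact, this produces a long exact sequence of Zariski sheaves. The outer terms are the sheaves $\mathcal{H}_X^k(j)$ and $\mathcal{H}_X^{k+1}(j)$ by definition. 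It remains to identify the middle sheaf $\mathcal{F}^k$, namely the sheafification of $U\mapsto H^k(U_{\C},\mathbb{Z}_2)$, with $\pi_{*}\mathcal{H}_{X_{\C}}^k$; this identification will simultaneously establish the second assertion of the proposition.

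For this identification I would argue stalk by stalk. Since $\pi:X_{\C}\to X$ is finite, the higher Zariski direct images $R^i\pi_{*}$ vanish for $i\geq 1$, and for every Zariski sheaf $\mathcal{G}$ on $X_{\C}$ and every $x\in X$ one has $(\pi_{*}\mathcal{G})_x=\bigoplus_{x'\in\pi^{-1}(x)}\mathcal{G}_{x'}$. Applied to $\mathcal{G}=\mathcal{H}_{X_{\C}}^k$ and using the description of the latter as a sheafification, $(\pi_{*}\mathcal{H}_{X_{\C}}^k)_x$ equals $\varinjlim_V H^k(V,\mathbb{Z}_2)$, where $V$ ranges over Zariski open neighborhoods of the finite set $\pi^{-1}(x)\subset X_{\C}$. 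Because $\pi$ is closed with finite fibers, the opens of the form $U_{\C}$ for $x\in U$ are cofinal in such $V$, so this limit coincides with $\varinjlim_{x\in U} H^k(U_{\C},\mathbb{Z}_2)=\mathcal{F}^k_x$, as required.

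The final assertion $H^q(X,\pi_{*}\mathcal{H}_{X_{\C}}^k)=H^q(X_{\C},\mathcal{H}_{X_{\C}}^k)$ then falls out of the Leray spectral sequence for $\pi$, which degenerates by the same vanishing of higher Zariski direct images already invoked.

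The main obstacle is the cofinality step in the stalk computation. A priori pushforward does not commute with sheafification, and the identification of $\pi_{*}\mathcal{H}_{X_{\C}}^k$ with $\mathcal{F}^k$ genuinely relies on $\pi$ being finite, so that the Zariski opens of the form $U_{\C}$ form a cofinal system of neighborhoods of each fiber $\pi^{-1}(x)$. Everything else is formal: the exactness of Zariski sheafification, and the vanishing of higher Zariski direct images under a finite morphism.
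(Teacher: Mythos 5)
Your overall architecture (sheafify the long exact sequence \eqref{rc}, identify the middle sheaf with $\pi_*\mathcal{H}^k_{X_{\C}}$ stalkwise, then apply Leray) is the same as the paper's, and the step you single out as the main obstacle --- cofinality of the opens $U_{\C}$ among Zariski neighbourhoods of the fibre $\pi^{-1}(x)$ --- is indeed needed but is the easy part (it follows from $\pi$ being a closed map). The genuine gap lies elsewhere, in the two places where you appeal to finiteness of $\pi$ as if it formally implied good behaviour of $\pi_*$ for Zariski sheaves. First, the stalk formula $(\pi_*\mathcal{G})_x=\bigoplus_{x'\in\pi^{-1}(x)}\mathcal{G}_{x'}$ is an \'etale-topology fact and is false for the Zariski topology: when the two conjugate points of $\pi^{-1}(x)$ lie on a common irreducible component, the correct stalk is $\varinjlim_V\mathcal{G}(V)$ over neighbourhoods $V$ of the whole fibre, i.e.\ sections over a \emph{semi-local} scheme, not a direct sum of stalks. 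Second, and more seriously, that limit is $\varinjlim_V H^0(V,\mathcal{H}^k_{X_{\C}})$ --- sections of the \emph{sheafification} --- whereas what you need it to equal is $\varinjlim_V H^k(V,\mathbb{Z}_2)$, the value of the presheaf. Sections of a sheafification over a neighbourhood of a finite set of points do not agree with the presheaf values in general; the equality here is exactly the statement that the Cousin (Gersten) complex remains exact after semi-localization at the finite set $\pi^{-1}(x)$, i.e.\ the effaceability theorem of Bloch--Ogus--Gabber. This is the substantive input of the paper's proof (its displays \eqref{lim1} and \eqref{lim2}) and cannot be replaced by "$\pi$ is finite".

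The same issue invalidates your Leray step: $R^p\pi_*\mathcal{G}=0$ for $p>0$ is \emph{not} automatic for finite morphisms in the Zariski topology (already for a semi-local Dedekind domain with two maximal ideals there are abelian sheaves with nonzero $H^1$ of the spectrum, so $R^1\pi_*$ of a finite morphism need not vanish on arbitrary sheaves). The vanishing of $R^p\pi_*\mathcal{H}^k_{X_{\C}}$ for $p>0$ holds for the specific sheaves at hand precisely because their flasque Cousin resolutions stay exact over the semi-local schemes $\varprojlim V$, which is again \eqref{lim1}. To repair your argument you should replace both appeals to finiteness by the effaceability statement for continuous \'etale cohomology (as in \cite[Theorem 5.1.10]{BOG}), from which both the stalk identification and the degeneration of the Leray spectral sequence follow as in the paper.
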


\begin{proof}
Let $x\in X$. If $V$ is a neighbourhood of $\pi^{-1}(x)$ in $X_{\C}$, the sheaf $\mathcal{H}_{V}^k$ has a flasque Cousin resolution (\ref{Cousin}). Taking global sections and taking the limit over all such neighbourhoods $V$ gives a complex that is exact in positive degree (the argument for \'etale cohomology with finite coefficients is \cite[Proposition 2.1.2]{BOG}, and the corresponding effaceability condition for continuous \'etale cohomology follows from \cite[Theorem 5.1.10]{BOG}). As a consequence,
\begin{equation}\label{lim1}
\varinjlim_{V} H^p(V,\mathcal{H}_{V}^k)=0\text{ for }p>0.
\end{equation}
Considering the coniveau spectral sequences (\ref{coniveau}) for every $V$, and taking (\ref{lim1}) into account shows that 
\begin{equation}\label{lim2}
\varinjlim_{V} H^k(V,\mathbb{Z}_2)=\varinjlim_{V} H^0(V,\mathcal{H}_{V}^k).
\end{equation}
Note that in both (\ref{lim1}) and (\ref{lim2}), it is possible to restrict to neighbourhoods of the form $U_{\C}$, for $U\subset X$ because they form a cofinal family.

Now, the exact sequences obtained by applying (\ref{rc}) to all open subsets of $X$ fit together to induce a long exact sequence of Zariski presheaves on $X$. By exactness of sheafification, one obtains a long exact sequence of Zariski sheaves on $X$:
$$\dots\to \mathcal{H}_X^k(j)\to \mathcal{F}^k\to \mathcal{H}_X^k(j+1)\to \mathcal{H}_X^{k+1}(j)\to\dots,$$
where $\mathcal{F}^k$ is the sheafification of $U\mapsto H^k(U_{\C},\mathbb{Z}_2)$. The universal property of sheafification gives a morphism $\mathcal{F}^k\to\pi_* \mathcal{H}_{X_{\C}}^k$. The map induced on stalks at $x\in X$ is precisely (\ref{lim2}), hence an isomorphism. It follows that  $\mathcal{F}^k\simeq\pi_* \mathcal{H}_{X_{\C}}^k$, completing the construction of (\ref{borc}).

If $k\geq 0$ and $p>0$, the stalk of $R^p\pi_*\mathcal{H}_{X_{\C}}^k=0$ at $x\in X$ is given by (\ref{lim1}), hence trivial.
It follows that $R^p\pi_*\mathcal{H}_{X_{\C}}^k$ vanishes, and the Leray spectral sequence for $\pi$ implies the last statement of the proposition.
\end{proof}

\subsection{Consequences of the Bloch-Kato conjecture.}

The following proposition is due to Bloch and Srinivas \cite[Proof of Theorem 1]{BS} for $k\leq 2$ and to Colliot-Th\'el\`ene and Voisin \cite[Th\'eor\`eme 3.1]{CTV} in general. Since both references work over an algebraically closed field,  and since \cite{CTV} uses Betti cohomology, we repeat the proof to emphasize that it works in our setting.

\begin{prop}\label{BK}
Let $X$ be a smooth variety over $\R$. Then, for every $k\geq 0$, the sheaf $\mathcal{H}^{k+1}_X(k)$ is torsion free.
\end{prop}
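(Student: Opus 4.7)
The plan is to reduce torsion-freeness to the vanishing of a Bockstein map, and then kill the Bockstein using the Bloch--Kato conjecture. Since $\mathbb{Z}_2$ has only $2$-primary torsion, it suffices to show that multiplication by $2$ is injective on $\mathcal{H}^{k+1}_X(k)$. I would start from the short exact sequence of $2$-adic \'etale sheaves $0 \to \mathbb{Z}_2(k) \xrightarrow{\cdot 2} \mathbb{Z}_2(k) \to \mathbb{Z}/2\mathbb{Z}(k) \to 0$ and derive, for each open $U\subset X$, a long exact sequence in continuous \'etale cohomology (available because the mod $2^r$ groups are finite, ensuring the Mittag--Leffler condition as in \S\ref{s1}). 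Sheafifying in the Zariski topology -- an exact operation -- yields
\[
\cdots \to \mathcal{H}^k_X(\mathbb{Z}/2\mathbb{Z}(k)) \xrightarrow{\delta} \mathcal{H}^{k+1}_X(k) \xrightarrow{\cdot 2} \mathcal{H}^{k+1}_X(k) \to \cdots,
\]
so that the torsion subsheaf of $\mathcal{H}^{k+1}_X(k)$ equals the image of $\delta$. The task reduces to proving $\delta=0$.

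Next, I would appeal to Voevodsky's theorem on the Milnor conjecture to describe the source. The upshot I want to extract is that the stalk $\mathcal{H}^k_{X,x}(\mathbb{Z}/2\mathbb{Z}(k))$ at any $x\in X$ is generated by symbols $\{f_1,\ldots,f_k\}$ with each $f_i$ a unit of the local ring $\mathcal{O}_{X,x}$, so that after shrinking the $f_i$ are invertible regular functions on a common Zariski neighborhood of $x$. The cleanest route is Kerz's refinement, giving a surjection $K^M_k(\mathcal{O}_{X,x})/2 \twoheadrightarrow \mathcal{H}^k_{X,x}(\mathbb{Z}/2\mathbb{Z}(k))$; alternatively one can work at the generic point via Bloch--Kato for fields and then use the Cousin-type resolution (\ref{Cousin}) together with unramifiedness to absorb zeros and poles into factors that contribute trivially.

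Finally, each such symbol lifts to an integral class, hence is killed by $\delta$. For $f\in\mathcal{O}^*(U)$, Kummer theory produces a compatible family of classes in $H^1(U,\mu_{2^r})$, yielding $(f)\in H^1(U,\mathbb{Z}_2(1))$, and the cup products lift $\{f_1,\ldots,f_k\}$ to $(f_1)\cup\cdots\cup(f_k)\in H^k(U,\mathbb{Z}_2(k))$. Consequently the generators of $\mathcal{H}^k_X(\mathbb{Z}/2\mathbb{Z}(k))$ lie in the image of mod-$2$ reduction and are annihilated by $\delta$, so $\delta=0$. The main obstacle will be the second step: Bloch--Kato in its field-theoretic form only produces symbols at the generic point, and turning these into symbols in \emph{regular invertible} functions on a Zariski neighborhood -- which is what allows Kummer theory to provide integral lifts -- requires either Kerz's theorem on Milnor K-theory of smooth local rings or a careful coniveau manipulation using the Cousin resolution.
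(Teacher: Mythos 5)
Your proposal is correct and follows essentially the same route as the paper: reducing torsion-freeness to the surjectivity of $\mathcal{H}^{k}_X(k)\to\mathcal{H}^{k}_X(\mu_2^{\otimes k})$ (your ``$\delta=0$'' is the same condition), then generating the mod-$2$ sheaf by symbols via Kerz's Gersten conjecture for Milnor K-theory together with Voevodsky's resolution of the Milnor conjecture, and lifting those symbols $2$-adically through the Kummer sequences. You have also correctly identified the delicate point, namely that one needs symbols in units of the local ring rather than merely at the generic point, which is exactly where the paper invokes Kerz.
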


\begin{proof}
%
%
Since it is a sheaf of $\mathbb{Z}_2$-modules, it suffices to prove that it has no $2$-torsion. Consider the exact sequence of $2$-adic sheaves on $X$: $0\to\mathbb{Z}_2(k)\stackrel{2}{\longrightarrow}\mathbb{Z}_2(k)\to\mu_2^{\otimes k}\to 0$. Taking long exact sequences of continuous cohomology over every open subset $U\subset X$ to get a long exact sequence of presheaves on $X$ and sheafifying it gives a long exact sequence of sheaves on $X$, part of which is:
\begin{equation*}
\mathcal{H}^{k}_X(k)\to\mathcal{H}^{k}_X(\mu_2^{\otimes k})\to\mathcal{H}^{k+1}_X(k)\stackrel{2}{\longrightarrow}\mathcal{H}^{k+1}_X(k),
\end{equation*}
where $\mathcal{H}^{k}_X(\mu_2^{\otimes k})$ is the sheafification of $U\mapsto H^k(U,\mu_2^{\otimes k})$.
Consequently, it suffices to prove the surjectivity of $\mathcal{H}^{k}_X(k)\to\mathcal{H}^{k}_X(\mu_2^{\otimes k})$.

On an open set $U\subset X$, the Kummer exact sequence $0\to\mu_2\to \mathbb{G}_m\stackrel{2}{\longrightarrow}\mathbb{G}_m\to 0$ induces a boundary map $H^0(U,\mathcal{O}_U^*)\to H^1(U,\mu_2)$. These maps sheafify to $\mathcal{O}_X^*\to \mathcal{H}^1_X(\mu_2)$, inducing via cup-products a morphism of sheaves $(\mathcal{O}_X^*)^{\otimes k}\to \mathcal{H}^{k}_X(\mu_2^{\otimes k})$. It is explained in \cite[end of section 2.2]{CTV} how Gersten's conjecture for Milnor K-theory proven by Kerz \cite{Kerz} and the Bloch-Kato conjecture proven by Rost and Voevodsky (since we only need this conjecture at the prime $2$, Voevodsky's work on Milnor's conjecture \cite[Corollary 7.4]{Voevodsky} is sufficient here) imply the surjectivity of this morphism.

Over an open set $U\subset X$, the boundary maps $H^0(U,\mathcal{O}_U^*)\to H^1(U,\mu_{2^r})$ for the Kummer exact sequences $0\to\mu_{2^r}\to \mathbb{G}_m\stackrel{2^r}{\longrightarrow}\mathbb{G}_m\to 0$ fit together to induce a map $H^0(U,\mathcal{O}_U^*)\to \varprojlim_r H^1(U,\mu_{2^r})=H^1(U,\mathbb{Z}_2(1))$. Again, this sheafifies to a morphism $\mathcal{O}_X^*\to \mathcal{H}^1_X(1)$, inducing via cup-products a morphism of sheaves $(\mathcal{O}_X^*)^{\otimes k}\to \mathcal{H}^{k}_X(k)$ lifting $(\mathcal{O}_X^*)^{\otimes k}\to \mathcal{H}^{k}_X(\mu_2^{\otimes k})$. The surjectivity of $\mathcal{H}^{k}_X(k)\to\mathcal{H}^{k}_X(\mu_2^{\otimes k})$ is now a consequence of the surjectivity of 
$(\mathcal{O}_X^*)^{\otimes k}\to \mathcal{H}^{k}_X(\mu_2^{\otimes k})$.
\end{proof}

In \cite{BS} and \cite{CTV}, the authors worked over an algebraically closed field, and the Tate twist was not essential for the result to hold. Here, it is very important: it is not true in general that the sheaf $\mathcal{H}^{k}_X(k)$ has no torsion.

As in these references, straightforward corollaries are:

\begin{cor}\label{sstorsion}
Let $X$ be a smooth variety over $\R$ and $k\geq 0$.

 Then $H^{k+1}_{\nr}(X,\mathbb{Z}_2(k))=H^0(X,\mathcal{H}^{k+1}_X(k))$ is torsion free.
\end{cor}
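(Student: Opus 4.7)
The plan is to deduce this directly from Proposition \ref{BK}, using only the left-exactness of global sections. There is essentially no new content beyond unwinding what \emph{torsion free} means at the sheaf level.

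First, I would recall from the discussion of the Cousin resolution (\ref{Cousin}) in \S\ref{parBO} that the identification $H^0(X,\mathcal{H}^{k+1}_X(k))=H^{k+1}_{\nr}(X,\mathbb{Z}_2(k))$ is already established, so it suffices to show that the abelian group $H^0(X,\mathcal{H}^{k+1}_X(k))$ is torsion free. Since $\mathcal{H}^{k+1}_X(k)$ is a sheaf of $\mathbb{Z}_2$-modules, its sections form a $\mathbb{Z}_2$-module, and it is enough to prove that the group has no $2$-torsion.

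Next, I would invoke Proposition \ref{BK}, which asserts that the Zariski sheaf $\mathcal{H}^{k+1}_X(k)$ is torsion free. In particular, multiplication by $2$ defines an injective morphism of sheaves
\begin{equation*}
\mathcal{H}^{k+1}_X(k)\xrightarrow{\ 2\ }\mathcal{H}^{k+1}_X(k).
\end{equation*}
Applying the left-exact functor $H^0(X,-)$ preserves this injectivity, so multiplication by $2$ is injective on $H^0(X,\mathcal{H}^{k+1}_X(k))$. Hence this group has no $2$-torsion, and therefore no torsion at all as a $\mathbb{Z}_2$-module.

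There is no real obstacle here: the genuine work is entirely contained in Proposition \ref{BK}, whose proof uses Gersten's conjecture for Milnor $K$-theory and the Bloch--Kato conjecture at the prime $2$. The only point to keep in mind is that the identification with unramified cohomology and the passage from \emph{sheaf torsion free} to \emph{sections torsion free} both rely on sheaf-theoretic facts (flasqueness of the Cousin resolution for the first, left-exactness of $\Gamma$ for the second) that are already available in the setup.
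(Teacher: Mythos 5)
Your argument is correct and is exactly the (implicit) argument of the paper, which states this corollary as an immediate consequence of Proposition \ref{BK}: the identification with $H^0(X,\mathcal{H}^{k+1}_X(k))$ comes from the flasque Cousin resolution, and torsion-freeness of global sections follows from torsion-freeness of the sheaf via left-exactness of $H^0(X,-)$. Nothing to add.
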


\begin{cor}\label{sstorsionbis}
Let $X$ be an integral variety over $\R$ with generic point $\eta$ and $k\geq 0$.

 Then $H^{k+1}_{\to}(\eta,\mathbb{Z}_2(k))$ is torsion free.
\end{cor}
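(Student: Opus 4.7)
The plan is to pass to a smooth dense open subscheme and realize $H^{k+1}_{\to}(\eta,\mathbb{Z}_2(k))$ as a stalk of the torsion-free sheaf produced in Proposition~\ref{BK}.

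First, since $\R$ is a real closed field, it has characteristic zero, so generic smoothness holds and the smooth locus $U\subseteq X$ is a dense open subset. The generic point $\eta$ of $X$ is also the generic point of $U$, and the non-empty open subsets of $U$ form a cofinal system among the non-empty open subsets of $X$. By the definition (\ref{cohinj}) of $H^{k+1}_{\to}(\eta,\mathbb{Z}_2(k))$, this gives
$$H^{k+1}_{\to}(\eta,\mathbb{Z}_2(k))=\varinjlim_{V\subseteq U}H^{k+1}(V,\mathbb{Z}_2(k)),$$
where the colimit runs over non-empty Zariski open subsets of $U$.

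Next, because $U$ is irreducible with generic point $\eta$, the non-empty open subsets of $U$ are exactly the Zariski open neighbourhoods of $\eta$. Hence the above colimit computes the stalk at $\eta$ of the presheaf $V\mapsto H^{k+1}(V,\mathbb{Z}_2(k))$, and since sheafification does not change stalks, it coincides with the stalk at $\eta$ of its sheafification $\mathcal{H}^{k+1}_U(k)$. Thus
$$H^{k+1}_{\to}(\eta,\mathbb{Z}_2(k))=\bigl(\mathcal{H}^{k+1}_U(k)\bigr)_{\eta}.$$

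Finally, since $U$ is smooth, Proposition~\ref{BK} asserts that the Zariski sheaf $\mathcal{H}^{k+1}_U(k)$ is torsion free, i.e.\ multiplication by $2$ is injective on it. Filtered colimits in abelian categories are exact, so the stalk at $\eta$ inherits this property, and we conclude that $H^{k+1}_{\to}(\eta,\mathbb{Z}_2(k))$ is torsion free. There is no real obstacle in this argument: the only point worth noting is that Proposition~\ref{BK} requires smoothness, but the reduction to the smooth locus is free in characteristic zero and does not alter the generic point.
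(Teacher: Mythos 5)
Your argument is correct and is essentially the paper's: both reduce to the smooth locus and derive the statement from the torsion-freeness of the sheaf $\mathcal{H}^{k+1}(k)$ established in Proposition~\ref{BK}. The paper phrases the final step via Corollary~\ref{sstorsion} (a torsion class dies in $H^0(U,\mathcal{H}^{k+1}_U(k))$, hence on a smaller open), whereas you identify $H^{k+1}_{\to}(\eta,\mathbb{Z}_2(k))$ directly as the stalk at $\eta$; these are the same observation, and your exactness-of-stalks formulation is a clean way to put it.
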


\begin{proof}
If $\alpha\in H^{k+1}(U,\mathbb{Z}_2(k))$ is a torsion class on a smooth open subset $U\subset X$, it vanishes in $H^{k+1}_{\nr}(U,\mathbb{Z}_2(k))$ by 
Corollary \ref{sstorsion}, hence on an open subset $V\subset U$.
\end{proof}

Another application of Proposition \ref{BK} is:

\begin{prop}\label{bkrc}
Let $X$ be a smooth variety over $\R$. Then for every $k\geq 0$, there is an exact sequence:
\begin{equation*}
0\to \mathcal{H}_X^{k-1}(k)\to \pi_* \mathcal{H}_{X_{\C}}^{k-1}\to \mathcal{H}_X^{k-1}(k+1)\to \mathcal{H}_X^{k}(k)\to \pi_* \mathcal{H}_{X_{\C}}^{k}\to \mathcal{H}_X^{k}(k+1)\to 0.
\end{equation*}
\end{prop}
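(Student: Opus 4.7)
The plan is to derive the six-term sequence by truncating the long exact sequence (\ref{borc}) of Proposition \ref{propCTS}, specialized to Tate twist $j = k$. In that sequence, the desired six terms appear with a boundary map $\delta_1 : \mathcal{H}^{k-2}_X(k+1) \to \mathcal{H}^{k-1}_X(k)$ entering on the left and a boundary map $\delta_2 : \mathcal{H}^{k}_X(k+1) \to \mathcal{H}^{k+1}_X(k)$ exiting on the right; it therefore suffices to show that $\delta_1 = 0 = \delta_2$. Both maps are sheafifications of the boundary maps from (\ref{rc}) and are thus given by cup product with the extension class $\omega \in H^1(G,\mathbb{Z}_2(1)) \simeq \mathbb{Z}/2\mathbb{Z}$ (see \S\ref{parrc}); since $2\omega = 0$, the images of $\delta_1$ and $\delta_2$ are both annihilated by $2$.

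For $\delta_2$, the target $\mathcal{H}^{k+1}_X(k)$ is torsion-free by Proposition \ref{BK} applied with parameter $k$, so the $2$-torsion image of $\delta_2$ vanishes and $\delta_2=0$.

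For $\delta_1$, the target $\mathcal{H}^{k-1}_X(k)$ is not literally of the form covered by Proposition \ref{BK}. However, because $G = \mathbb{Z}/2\mathbb{Z}$, the Galois module $\mathbb{Z}_2(j)$ depends only on the parity of $j$, so for $k \geq 2$ the Tate-twist shift gives
\[
\mathcal{H}^{k-1}_X(k) \;=\; \mathcal{H}^{k-1}_X(k-2) \;=\; \mathcal{H}^{(k-2)+1}_X(k-2),
\]
which is torsion-free by Proposition \ref{BK} applied with parameter $k-2$; hence $\delta_1 = 0$. In the boundary cases $k = 0$ and $k = 1$, the source $\mathcal{H}^{k-2}_X(k+1)$ vanishes for cohomological-degree reasons, and $\delta_1 = 0$ trivially.

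The main subtle step is the Tate-twist parity identification used to control $\delta_1$. Proposition \ref{BK} is stated only for sheaves $\mathcal{H}^{m}_X(j)$ with $m = j+1$, whereas $\mathcal{H}^{k-1}_X(k)$ naïvely sits in the opposite regime $m = j-1$; recognizing that one can rewrite it as $\mathcal{H}^{(k-2)+1}_X(k-2)$ using $\mathbb{Z}_2(j) \cong \mathbb{Z}_2(j-2)$ as $G$-modules is the only non-routine input needed beyond what is already available in the paper.
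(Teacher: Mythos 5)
Your proof is correct and follows essentially the same route as the paper: both truncate the long exact sequence (\ref{borc}) at twist $j=k$ and kill the two connecting maps by combining a $2$-torsion observation with the torsion-freeness of Proposition \ref{BK}. The only (harmless) differences are that you detect the $2$-torsion via the boundary map being cup product with $\omega$ and $2\omega=0$, whereas the paper uses the composite $\pi_*\pi^*=2$, and that you spell out the parity-of-twist identification $\mathcal{H}^{k-1}_X(k)=\mathcal{H}^{(k-2)+1}_X(k-2)$ (and the edge cases $k=0,1$) that the paper leaves implicit.
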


\begin{proof}
Let us prove that the long exact sequence (\ref{borc}) splits into these shorter exact sequences. It suffices to prove that, for $k\geq 0$, the morphism  $\mathcal{H}_X^{k-1}(k)\to \pi_* \mathcal{H}_{X_{\C}}^{k-1}$ is injective.
The composition $\mathcal{H}_X^{k-1}(k)\to \pi_* \mathcal{H}_{X_{\C}}^{k-1}\to\mathcal{H}_X^{k-1}(k)$ is multiplication by $2$.
Consequently, the kernel of $\mathcal{H}_X^{k-1}(k)\to \pi_* \mathcal{H}_{X_{\C}}^{k-1}$ is of $2$-torsion. Since $\mathcal{H}_X^{k-1}(k)$ is torsion free by Proposition \ref{BK}, this kernel is trivial as required.
\end{proof}

\begin{prop}\label{bkrc2}
Let $X$ be an integral variety over $\R$ with generic point $\eta$. Then for every $k\geq 0$, there is an exact sequence:
\begin{alignat*}{2}
0\to H^{k-1}_{\to}&(\eta,\mathbb{Z}_2(k))\to H^{k-1}_{\to}(\eta,\pi_*\mathbb{Z}_2)\to H^{k-1}_{\to}(\eta,\mathbb{Z}_2(k+1))\\
&\to H^{k}_{\to}(\eta,\mathbb{Z}_2(k))\to H^{k}_{\to}(\eta,\pi_*\mathbb{Z}_2)\to H^{k}_{\to}(\eta,\mathbb{Z}_2(k+1))\to 0.
\end{alignat*}
\end{prop}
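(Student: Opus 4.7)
The plan is to deduce Proposition \ref{bkrc2} from the sheaf-level statement of Proposition \ref{bkrc} by passing to the stalk at the generic point $\eta$. First I would reduce to the smooth case: since $\R$ has characteristic zero, the smooth locus $X^{\sm}$ of $X$ is a dense open integral subscheme with the same generic point, and since the groups $H^k_{\to}(\eta,-)$ depend only on a cofinal system of open neighborhoods of $\eta$ contained in $X^{\sm}$, I may replace $X$ by $X^{\sm}$ and assume $X$ is smooth. Applying Proposition \ref{bkrc} then yields the desired six-term exact sequence of Zariski sheaves on $X$, and since the stalk functor at $\eta$ is exact, taking stalks produces an exact sequence of abelian groups with the same shape.

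It then remains to identify each stalk with the corresponding group in the statement. For $\mathcal{H}^k_X(j)$, which is by definition the sheafification of $U \mapsto H^k(U,\mathbb{Z}_2(j))$, the stalk at $\eta$ coincides with $\varinjlim_{U \ni \eta} H^k(U,\mathbb{Z}_2(j)) = H^k_{\to}(\eta,\mathbb{Z}_2(j))$, since sheafification preserves stalks. For $\pi_*\mathcal{H}^k_{X_{\C}}$, the stalk at $\eta$ is
\[
\varinjlim_{U \ni \eta}\, (\pi_*\mathcal{H}^k_{X_{\C}})(U) \;=\; \varinjlim_{U}\, H^0(U_{\C}, \mathcal{H}^k_{X_{\C}}),
\]
and by the direct-limit identification (\ref{lim2}) established in the proof of Proposition \ref{propCTS}, this equals $\varinjlim_U H^k(U_{\C},\mathbb{Z}_2) = \varinjlim_U H^k(U,\pi_*\mathbb{Z}_2) = H^k_{\to}(\eta,\pi_*\mathbb{Z}_2)$, using that the Leray spectral sequence degenerates for the finite morphism $\pi$.

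The one subtle point is the stalk identification for $\pi_*\mathcal{H}^k_{X_{\C}}$: for a fixed open $U$, the group $H^0(U_{\C},\mathcal{H}^k_{X_{\C}})$ is the unramified cohomology $H^k_{\nr}(U_{\C},\mathbb{Z}_2)$, which is generally strictly smaller than $H^k(U_{\C},\mathbb{Z}_2)$. The discrepancy disappears only after passing to the limit over shrinking $U$, thanks to the vanishing $\varinjlim_V H^p(V,\mathcal{H}^k_V)=0$ for $p>0$ extracted from the Cousin resolution in the proof of Proposition \ref{propCTS}. Once this identification is in hand, the conclusion follows merely by invoking the exactness of stalk-taking. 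As an alternative route, one could instead directly take the direct limit of the long exact sequence (\ref{rc}) over smooth opens $U \subset X$ containing $\eta$ and then split the resulting long sequence into six-term pieces using Corollary \ref{sstorsionbis} together with the periodicity of $\mathbb{Z}_2(j)$ modulo $2$; this yields the same conclusion but requires redoing the multiplication-by-$2$ argument of Proposition \ref{bkrc} at the level of the generic point.
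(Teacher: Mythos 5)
Your proof is correct, but your primary route differs from the paper's. The paper proves Proposition \ref{bkrc2} exactly by your ``alternative route'': it takes the direct limit of the long exact sequence (\ref{rc}) over all open subsets of $X$ and splits the resulting long exact sequence into six-term pieces by repeating the multiplication-by-$2$ argument of Proposition \ref{bkrc}, with Corollary \ref{sstorsionbis} supplying the torsion-freeness of $H^{k-1}_{\to}(\eta,\mathbb{Z}_2(k))$ in place of Proposition \ref{BK}. Your main argument instead deduces the statement from the already-established sheaf-level splitting of Proposition \ref{bkrc} by passing to stalks at $\eta$, after reducing to the smooth locus (legitimate in characteristic zero, since the nonempty smooth opens are cofinal and $H^k_{\to}(\eta,-)$ only sees a cofinal system). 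The stalk identifications you give are right: sheafification preserves stalks, so the stalk of $\mathcal{H}^k_X(j)$ at $\eta$ is $H^k_{\to}(\eta,\mathbb{Z}_2(j))$, and since Proposition \ref{propCTS} identifies $\pi_*\mathcal{H}^k_{X_{\C}}$ with the sheafification of $U\mapsto H^k(U_{\C},\mathbb{Z}_2)$, its stalk at $\eta$ is $H^k_{\to}(\eta,\pi_*\mathbb{Z}_2)$ directly --- you do not actually need to reopen the discussion of $(\ref{lim2})$ or of unramified cohomology of a fixed $U_{\C}$, though what you say there is harmless. What your route buys is that the torsion-freeness input is used only once, at the sheaf level; what the paper's route buys is that it avoids the reduction to the smooth case and works verbatim for the (possibly singular) integral $X$ in the statement. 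Both are complete proofs.
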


\begin{proof}
Take the direct limit of the long exact sequence (\ref{rc}) applied to all open subsets of $X$: it splits into exact sequences of length six by the same argument as in the proof of Proposition \ref{bkrc}, using Corollary \ref{sstorsionbis} instead of Corollary \ref{sstorsion}.
\end{proof}

\section{Sums of squares and unramified cohomology}\label{s3}

\subsection{Sums of squares and level.}\label{parsquares}

Let $n\geq 1$, consider a nonzero positive semidefinite polynomial $f\in \R[X_1,\dots,X_n]$ and its homogenization $F\in \R[X_0,\dots,X_n]$. Notice that since an odd degree polynomial over $\R$ changes sign, $f$ and $F$ must have even degree. This allows to consider the double cover $Y$ of $\mathbb{P}^n_{\R}$ ramified over $\{F=0\}$ defined by the equation $Y:=\{Z^2+F=0\}$ in the weighted projective space $\mathbb{P}(1,\dots, 1,\deg(F)/2)$. 

\begin{lem}\label{rev}
The variety $Y$ is integral, $\R(Y)$ is not formally real,
and if $\widetilde{Y}\to Y$ is a resolution of singularities, $\widetilde{Y}(\R)=\varnothing$.
\end{lem}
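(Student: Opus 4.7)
The plan is to establish the three claims in sequence, each feeding into the next.

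First, I would prove integrality by showing $Z^2+F$ is irreducible in $\R[X_0,\dots,X_n][Z]$. Since this is a monic quadratic in $Z$, reducibility is equivalent to $-F$ being a square in the UFD $\R[X_0,\dots,X_n]$. Were $-F=H^2$, then $F=-H^2\leq 0$ pointwise on $\R^{n+1}$; combined with $F\geq 0$ (the homogeneous analogue of positive semidefiniteness of $f$), $F$ would vanish identically on $\R^{n+1}$, hence as a polynomial, using that the real closed field $\R$ is infinite. This contradicts $f\neq 0$.

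Next, to check that $\R(Y)$ is not formally real, I would work in the affine chart $X_0=1$ and set $z:=Z/X_0^{d/2}$, so that $\R(Y)=\R(X_1,\dots,X_n)(z)$ with $z^2=-f$ and $z\neq 0$ (since $f\neq 0$). Artin's solution to Hilbert's 17th problem, recalled in the introduction, yields rational functions $r_j\in\R(X_1,\dots,X_n)$ with $f=\sum_j r_j^2$. Then
\[
\sum_j\left(\frac{r_j}{z}\right)^2=\frac{\sum_j r_j^2}{z^2}=\frac{f}{-f}=-1
\]
in $\R(Y)$, exhibiting $-1$ as a sum of squares.

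For the third point, since $\widetilde{Y}\to Y$ is birational we have $\R(\widetilde{Y})=\R(Y)$. If $\widetilde{Y}(\R)$ were nonempty, then $\widetilde{Y}$ being smooth, any such point would be a smooth $\R$-point, and Proposition \ref{realff} applied to $\widetilde{Y}$ would force $\R(\widetilde{Y})$ to be formally real, contradicting the previous step.

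I do not anticipate any genuine obstacle: the only substantive external input is Artin's theorem in the second step, which is entirely natural in this context. A more self-contained alternative would pick a point $p\in\R^n$ with $f(p)>0$ and analyze the real locus of the fiber of $Y\to\mathbb{P}^n_\R$ over $p$, but this is no shorter and less transparent than the argument via Artin.
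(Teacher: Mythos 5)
Your proof is correct. The integrality step is essentially the paper's argument (show $-F$, equivalently $-f$, is not a square, using positivity and the infinitude of $\R$), but for the remaining two claims you reverse the paper's logical flow. The paper never invokes Artin's theorem here: it observes that any $\R$-point of $\widetilde{Y}$ must lie over a zero of $F$ (by positivity of $F$), so $\widetilde{Y}(\R)$ is not Zariski-dense, and then a single application of Proposition \ref{realff} to the smooth variety $\widetilde{Y}$ yields simultaneously that $\widetilde{Y}(\R)=\varnothing$ and that $\R(\widetilde{Y})=\R(Y)$ is not formally real. You instead first produce the explicit identity $-1=\sum_j (r_j/z)^2$ from Artin's decomposition $f=\sum_j r_j^2$ and $z^2=-f$, and then deduce $\widetilde{Y}(\R)=\varnothing$ from Proposition \ref{realff} via the non-formal-reality. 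Your route is perfectly valid (Artin's theorem is an external input already used elsewhere in the paper, e.g.\ in Proposition \ref{sumlevel}, so there is no circularity) and has the merit of exhibiting a concrete sum of squares representing $-1$; the paper's route is more economical in that it needs only the elementary geometric observation about where the real points sit, plus Proposition \ref{realff}, and treats both conclusions in one stroke. The only point you assert without proof is that the homogenization $F$ is itself nonnegative on $\R^{n+1}$; this follows from $F(x_0,x)=x_0^d f(x/x_0)$ with $d$ even together with a density argument at $x_0=0$, and the paper uses the same fact without comment, so this is not a gap.
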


\begin{proof}
To prove that $Y$ is integral, one has to check that $-f$ is not a square in $\R(X_1,\dots,X_n)$, equivalently that it is not a square in $\R[X_1,\dots,X_n]$. But if it were, $f$ would be negative on $\R^n$, hence zero on $\R^n$ by positivity, hence zero by Zariski-density of $\R^n$ in $\C^n$: this is a contradiction.

The $\R$-points of $\widetilde{Y}$ necessarily lie above $\R$-points of $Y$, hence, by positivity of $F$, above zeroes of $F$. Consequently,  $\widetilde{Y}(\R)$ is not Zariski-dense in $\widetilde{Y}$. Applying Proposition \ref{realff} using the smoothness of $\widetilde{Y}$ shows that $\widetilde{Y}(\R)=\varnothing$, and that $\R(Y)$ is not formally real.
\end{proof}

Recall that the level $s(K)\in\mathbb{N}^*\cup\{\infty\}$ of a field $K$ is $\infty$ if $-1$ is not a sum of squares in $K$ and the smallest $s$ such that $-1$ is a sum of $s$ squares otherwise. In the latter case, it has been shown by Pfister \cite[Satz 4]{PfisterStufe} to be a power of $2$.

\begin{prop}\label{sumlevel}
The polynomial $f$ is a sum of $2^n-1$ squares in $\R(X_1,\dots,X_n)$ if and only if $\R(Y)$ has level $< 2^{n}$.
Conversely, the polynomial $f$ is not a sum of $2^n-1$ squares in $\R(X_1,\dots,X_n)$ if and only if $\R(Y)$ has level $2^{n}$.
\end{prop}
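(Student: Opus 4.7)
The identification $\R(Y)=K(\sqrt{-f})$, with $K:=\R(X_1,\dots,X_n)$ and $Z:=\sqrt{-f}$ satisfying $Z^2+f=0$, reduces the proposition to a level computation in $L:=\R(Y)$. First, the two halves of the proposition are contrapositive assertions once one knows $s(L)\le 2^n$: indeed $L$ has transcendence degree $n$ over $\R$, so Pfister's bound gives $p(L)\le 2^n$; combined with the fact that $L$ is not formally real (Lemma~\ref{rev}) we get $s(L)\le p(L)\le 2^n$, and by Pfister's Stufensatz $s(L)$ is a power of $2$. Thus $s(L)\in\{1,2,\dots,2^n\}$ and the conditions ``$s(L)<2^n$'' and ``$s(L)=2^n$'' are complementary; it is enough to prove the first iff.

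\emph{Forward direction.} If $f=y_1^2+\dots+y_{2^n-1}^2$ in $K$, then in $L$ we have $Z^2+\sum y_i^2=0$, and dividing by $Z^2\ne 0$ gives $-1=\sum(y_i/Z)^2$, a sum of $2^n-1$ squares; hence $s(L)\le 2^n-1<2^n$.

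\emph{Backward direction.} Assume $s(L)<2^n$; since the level is a power of $2$ this forces $s(L)\le 2^{n-1}$. Write $-1=\sum_{i=1}^{2^{n-1}}\alpha_i^2$ in $L$ with $\alpha_i=c_i+d_iZ$, and compare components in the $K$-basis $(1,Z)$ of $L$ to obtain the Pfister equations
\[ 1+\sum_i c_i^2 = f\sum_i d_i^2, \qquad \sum_i c_i d_i = 0.\]
Set $b:=\sum_i d_i^2$. Then $b\ne 0$, for otherwise $-1=\sum_i c_i^2$ in $K$, contradicting the formal reality of $\R(X_1,\dots,X_n)$ (evaluate at a real point not a pole). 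The plan is to show that $fb^2$ is a sum of $2^n-1$ squares in $K$, from which the same follows for $f$ after dividing by the square $b^2$. The crux is the identity
\[ \Bigl(\sum_i c_i^2\Bigr)\Bigl(\sum_i d_i^2\Bigr)=\Bigl(\sum_i c_id_i\Bigr)^{\!2}+\bigl(\text{a sum of }2^{n-1}-1\text{ squares in }K\bigr), \tag{$\star$}\]
which, combined with the orthogonality $\sum c_id_i=0$, gives $(\sum c_i^2)(\sum d_i^2)$ as a sum of $2^{n-1}-1$ squares. Then
\[ fb^2=\Bigl(1+\sum_i c_i^2\Bigr)b = b + \Bigl(\sum_i c_i^2\Bigr)\Bigl(\sum_i d_i^2\Bigr) \]
is a sum of $2^{n-1}+(2^{n-1}-1)=2^n-1$ squares, as required.

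\emph{Main obstacle.} The identity $(\star)$ is the delicate point: it is precisely the norm equation $|c\bar d|^2=|c|^2|d|^2$ read in the normed composition algebra of dimension $2^{n-1}$, whose scalar part is the inner product $\sum c_id_i$. This makes $(\star)$ routine for $n-1\le 3$ via $\mathbb{C}$, $\mathbb{H}$, $\mathbb{O}$. For $n-1\ge 4$ no such composition algebra exists over $\R$ (Hurwitz), and $(\star)$ must be derived from Pfister's general rational multiplicative formula for the $(n-1)$-fold Pfister form, arranging the first component of the product to be $\sum c_id_i$; equivalently, one can package everything in Pfister's classical correspondence between levels of quadratic extensions and representations by Pfister forms, so that $s(L)\le 2^{n-1}$ directly yields $f$ as a sum of $2^{n-1}\le 2^n-1$ squares of rational functions.
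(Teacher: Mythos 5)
Your argument is correct and reaches the same conclusion, but where the paper disposes of the whole equivalence in one citation --- after noting that $\R(X_1,\dots,X_n)$ is formally real (Proposition~\ref{realff}) and that $f$ is a sum of squares by Artin, it invokes \cite[Chap.~11, Theorem 2.7]{Lam} for ``$f$ is a sum of $2^n-1$ squares iff $s(\R(Y))<2^n$'', and \cite[Theorem 2]{Pfister17} for $s(\R(Y))\le 2^n$ --- you instead unpack that citation and reprove Pfister's Satz~5 by hand: expand $-1=\sum(c_i+d_iZ)^2$ in the basis $(1,Z)$, use formal reality of $K$ to get $\sum d_i^2\ne 0$, and conclude via the composition identity $(\star)$. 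This is a legitimate and essentially self-contained alternative, with two caveats. First, your justification of $s(L)\le 2^n$ via ``$p(L)\le 2^n$'' is not quite the right citation: Pfister's $2^n$ bound on the Pythagoras number is for the rational function field, whereas for a general non-formally-real function field of transcendence degree $n$ the correct statement is directly the level bound $s(L)\le 2^n$ of \cite[Theorem 2]{Pfister17} (which is what the paper uses); the conclusion is unaffected. Second, the identity $(\star)$ is exactly the nontrivial content of the cited theorem, so your proof is only as complete as your justification of it. It is indeed a theorem of Pfister, and rather than invoking the rational composition formula (which introduces denominators and hence specialization issues when sub-sums of squares vanish), one can prove it cleanly in the abstract: if $a=\sigma(c)$, $b=\sigma(d)$, $s=\sum c_id_i$ with $c,d$ independent and $\sigma=\sigma_{2^{n-1}}$ anisotropic over $K$, then the restriction of $\sigma$ to the plane spanned by $c,d$ is $\langle a, a(ab-s^2)\rangle$, so $\langle 1, ab-s^2\rangle\subset a\sigma\cong\sigma$ by roundness of Pfister forms, and Witt cancellation of the $\langle 1\rangle$ gives $ab-s^2\in D(\sigma_{2^{n-1}-1})$, which with $s=0$ is precisely $(\star)$. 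With that lemma in place your backward direction, and hence the whole proposition, is correct.
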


\begin{proof}
Proposition \ref{realff} shows that $\R(X_1,\dots,X_n)$ is formally real and Artin's solution to Hilbert's 17\textsuperscript{th} problem \cite{Artin17} shows that $f$ is a sum of squares in $\R(X_1,\dots,X_n)$.

Then, \cite[Chap. 11 Theorem 2.7]{Lam} applies and shows that $f$ is a sum of $2^n-1$ squares in $\R(X_1,\dots,X_n)$ if and only if $\R(Y)$ has level $< 2^{n}$ (this is essentially due to Pfister: the statement we have used is very close and its proof is identical to \cite[Satz 5]{PfisterStufe}).

Since $\R(Y)$ is not formally real by Lemma \ref{rev}, Pfister has shown that its level is $\leq 2^n$ \cite[Theorem 2]{Pfister17}. This concludes the proof.
\end{proof}

\subsection{Level and unramified cohomology}
To apply Proposition \ref{sumlevel}, we need to control the level of the function field of a variety over $\R$. The following proposition relates it to one of its unramified cohomology groups.
The equivalence (i)$\Leftrightarrow$(ii) 
is hinted at in \cite[bottom of p.236]{CTNL}, at least for $n=3$.
I am grateful to Olivier Wittenberg for explaining to me that the implication (ii)$\Rightarrow$(iii) holds.

\begin{prop}\label{levelnr}
Let $X$ be a smooth integral variety over $\R$
, and fix $n\geq 1$.  The following assertions are equivalent:
\begin{enumerate}[(i)]
\item The function field $\R(X)$ has level $<2^{n}$.
\item The map $H^n(\R,\mathbb{Z}/2\mathbb{Z})\to H^n(\R(X),\mathbb{Z}/2\mathbb{Z})$ vanishes.
\item The map $H^n(\R,\mathbb{Z}_2(n))\to H^n_{\nr}(X,\mathbb{Z}_2(n))$ vanishes.
\end{enumerate}
\end{prop}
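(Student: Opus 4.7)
I plan to prove (i)$\Leftrightarrow$(ii) using Voevodsky's resolution of the Milnor conjecture, and (ii)$\Leftrightarrow$(iii) by comparing $\mathbb{Z}/2\mathbb{Z}$- and $\mathbb{Z}_2(n)$-coefficients via the results of Sections~\ref{s1}--\ref{s2}.

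For (i)$\Leftrightarrow$(ii), the plan is to invoke Voevodsky's theorem \cite{Voevodsky}: the norm residue map identifies $H^n(\R(X),\mathbb{Z}/2\mathbb{Z})\cong K^M_n(\R(X))/2\cong I^n/I^{n+1}$ (the last being the filtration on the Witt ring), sending $\omega^n$ to the class of the Pfister form $\phi_n=\langle 1,1,\ldots,1\rangle$ with $2^n$ entries. This class vanishes iff $\phi_n\in I^{n+1}$, iff (by the Arason--Pfister Hauptsatz, since $\dim\phi_n=2^n<2^{n+1}$) $\phi_n$ is hyperbolic, iff it is isotropic, iff $-1$ is a sum of fewer than $2^n$ squares in $\R(X)$, iff $\R(X)$ has level $<2^n$.

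The implication (iii)$\Rightarrow$(ii) is immediate via reduction modulo $2$: the left vertical $H^n(\R,\mathbb{Z}_2(n))\to H^n(\R,\mathbb{Z}/2\mathbb{Z})$ is the identity between $\mathbb{Z}/2\mathbb{Z}$-generators, and the right vertical factors through the injection $H^n_{\nr}(X,\mathbb{Z}/2\mathbb{Z})\hookrightarrow H^n(\R(X),\mathbb{Z}/2\mathbb{Z})$ given by Gersten's conjecture with mod~$2$ coefficients (classical Bloch--Ogus).

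The hard direction is (ii)$\Rightarrow$(iii). My plan is to embed $H^n_{\nr}(X,\mathbb{Z}_2(n))\hookrightarrow H^n_{\to}(\eta,\mathbb{Z}_2(n))$ using Gersten in $2$-adic cohomology (Section~\ref{s2}), reducing (iii) to the statement $\omega^n|_\eta=0$ in $H^n_{\to}(\eta,\mathbb{Z}_2(n))$. Using the factorization $\omega^n|_\eta=\omega\cdot\omega^{n-1}|_\eta$ together with the exact sequence~(\ref{rc}) on $\eta$, this translates to: $\omega^{n-1}|_\eta\in H^{n-1}_{\to}(\eta,\mathbb{Z}_2(n+1))$ lies in the image of the pushforward $\pi_*\colon H^{n-1}(\eta_{\C},\mathbb{Z}_2)\to H^{n-1}_{\to}(\eta,\mathbb{Z}_2(n+1))$. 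The analogous mod-$2$ sequence reformulates (ii) as the same lifting statement modulo $2$. The critical step---promoting the mod-$2$ lift to a $\mathbb{Z}_2$ lift---is the main obstacle, which I plan to tackle via a diagram chase between the $\mathbb{Z}_2$ and $\mathbb{Z}/2\mathbb{Z}$ exact sequences, using Corollary~\ref{sstorsionbis} to eliminate torsion obstructions, the vanishing $H^{n+1}(\C(X),\mathbb{Z}_2)=0$ (following from $\operatorname{cd}(\C(X))\le n$, cf.\ Proposition~\ref{cd}) to ensure surjectivity of reduction mod $2$ in the relevant degrees, and the fact that $\omega^n$ is $2$-torsion.
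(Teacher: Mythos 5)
Your overall strategy matches the paper's in all three parts. For (i)$\Leftrightarrow$(ii), the paper uses Elman--Lam's linkage theorem to pass directly from isotropy of $\langle 1,1\rangle^{\otimes n}$ to the vanishing of $\{-1\}^n$ in $K^M_n(\R(X))/2$, whereas you route through $I^n/I^{n+1}$ and Arason--Pfister; both work, though your route additionally invokes the identification $K^M_n/2\cong I^n/I^{n+1}$ (Orlov--Vishik--Voevodsky), which is more than \cite{Voevodsky} alone. Your (iii)$\Rightarrow$(ii) is the paper's argument. For (ii)$\Rightarrow$(iii), the paper works on an open $U$ where $\omega^n$ dies mod $2$, writes $\omega^n=2\alpha$, observes that $\alpha_{\C}$ is $2$-torsion, shrinks $U$ so that $\alpha_{\C}=0$ using Corollary \ref{sstorsionbis}, writes $\alpha=\beta\cdot\omega$ and concludes $\omega^n=2\alpha=\beta\cdot 2\omega=0$. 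Your reformulation as a lifting problem for $\omega^{n-1}$ through $\pi_*$ at the generic point is the same diagram chase read in the opposite direction, and it does close: the cokernel of reduction mod $2$ in degree $n-1$ injects (via the Bockstein) into the $2$-torsion of $H^{n}_{\to}(\eta_{\C},\mathbb{Z}_2)$, which vanishes by Corollary \ref{sstorsionbis}, so a mod-$2$ lift promotes to a $\mathbb{Z}_2$-lift up to an element of the form $2y=\pi_*\pi^*y$, which is absorbed into the image of $\pi_*$.

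The one genuine flaw is the appeal to ``$H^{n+1}(\C(X),\mathbb{Z}_2)=0$ following from $\operatorname{cd}(\C(X))\le n$.'' In Proposition \ref{levelnr} the integer $n$ is arbitrary and decoupled from $\dim X$ (unlike in Proposition \ref{cd}, where $n$ denotes the dimension), so $\operatorname{cd}(\C(X))=\dim X$ may well exceed $n$ and this vanishing is false in general. Fortunately the step it was meant to justify does not need it: the surjectivity of reduction mod $2$ you require is in degree $n-1$, and its obstruction lives in the torsion of $H^{n}_{\to}(\eta_{\C},\mathbb{Z}_2)$, which Corollary \ref{sstorsionbis} (applied to $X_{\C}$, where the twist is irrelevant) already kills. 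Delete the cohomological-dimension argument and rely only on the torsion-freeness statement; the proof then goes through and coincides in substance with the paper's.
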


\begin{proof}
Consider the property that the level of $\R(X)$ is $<2^n$.  It is equivalent to the fact $-1$ is a sum of $2^n-1$ squares in $\R(X)$, hence to the fact that the Pfister quadratic form $q:=\langle 1,1\rangle^{\otimes n}$ is isotropic over $\R(X)$. By a theorem of Elman and Lam \cite[Corollary 3.3]{ElmanLam}, this is equivalent to the vanishing of the symbol $\{-1\}^{n}$ in the Milnor K-theory group $K_n^M(\R(X))/2$. By Voevodsky's proof of the Milnor conjecture \cite[Corollary 7.4]{Voevodsky}, the natural map $K_n^M(\R(X))/2\to H^n(\R(X),\mathbb{Z}/2\mathbb{Z})$ is an isomorphism, so that our property is equivalent to the vanishing of $H^n(\R,\mathbb{Z}/2\mathbb{Z})\to H^n(\R(X),\mathbb{Z}/2\mathbb{Z})$. We have proven that (i) and (ii) are equivalent.

Suppose that (iii) holds and let $\eta$ be the generic point of $X$.
 The definition of $H^n_{\nr}(X,\mathbb{Z}_2(n))$ as a subgroup of $H^n_{\to}(\eta,\mathbb{Z}_2(n))$ shows that 
$H^n(\R,\mathbb{Z}_2(n))\to H^n_{\to}(\eta,\mathbb{Z}_2(n))$ vanishes.
 Then we have a commutative diagram:
\begin{equation*}
\xymatrix{
H^n(\R,\mathbb{Z}_2(n))\ar^{\cong}[d]\ar[r]
&  H^n_{\to}(\eta,\mathbb{Z}_2(n))\ar[d]  \\
H^n(\R,\mathbb{Z}/2\mathbb{Z})\ar[r]
& H^n_{\to}(\eta,\mathbb{Z}/2\mathbb{Z}),
}
\end{equation*}
where the groups on the right are defined as inductive limits on the open subsets of $X$ as in (\ref{cohinj}), showing that $H^n(\R,\mathbb{Z}/2\mathbb{Z})\to H^n_{\to}(\eta,\mathbb{Z}/2\mathbb{Z})$ vanishes. Since \'etale cohomology commutes with such limits \cite[VII Corollaire 5.8]{SGA42}, $H^n_{\to}(\eta,\mathbb{Z}/2\mathbb{Z})$ is nothing but the Galois cohomology group $H^n(\R(X),\mathbb{Z}/2\mathbb{Z})$, proving (ii).


Suppose conversely that (ii) holds, and let $U\subset X$ be an open subset such that  $\omega^{n}$ vanishes in $H^n(U,\mathbb{Z}/2\mathbb{Z})$. 
Consider the following commutative exact diagram, where the lines are (\ref{rc}):
\begin{equation*}
\xymatrix{
H^{n-1}(U,\mathbb{Z}_2(n-1))\ar^-{\omega}[r]\ar^2[d] &H^{n}(U,\mathbb{Z}_2(n))\ar^2[d]\ar[r]&H^{n}(U_{\C},\mathbb{Z}_2)\ar^2[d]\\
H^{n-1}(U,\mathbb{Z}_2(n-1))\ar^-{\omega}[r]&H^{n}(U,\mathbb{Z}_2(n))\ar[r]\ar[d]&H^{n}(U_{\C},\mathbb{Z}_2)\\
&H^{n}(U,\mathbb{Z}/2\mathbb{Z})&\\
}
\end{equation*}
Look at $\omega^{n}\in H^{n}(U,\mathbb{Z}_2(n))$. By hypothesis, it vanishes in $H^{n}(U,\mathbb{Z}/2\mathbb{Z})$, hence may be written $2\alpha$ for some $\alpha\in H^{n}(U,\mathbb{Z}_2(n))$. Since $\omega^n\in H^{n}(U,\mathbb{Z}_2(n))$ is the image of $\omega^{n-1}\in H^{n-1}(U,\mathbb{Z}_2(n-1))$, $\alpha_{\C}\in H^{n}(U_{\C},\mathbb{Z}_2)$ is a $2$-torsion class.
By Corollary \ref{sstorsionbis}, any torsion class in $H^{n}(U_{\C},\mathbb{Z}_2)$ vanishes on an open subset: up to shrinking $U$, we may assume that $\alpha_{\C}=0$, hence that there is $\beta\in H^{n-1}(U,\mathbb{Z}_2(n-1))$ such that $\beta\cdot\omega=\alpha$. Then $\omega^n=\beta\cdot 2\omega=0\in H^{n}(U,\mathbb{Z}_2(n))$, proving (iii).
\end{proof}

\subsection{From degree $n$ to degree $n+1$ cohomology}\label{parnn+1}

Condition (iii) in Proposition \ref{levelnr} means that $\omega^n$
has coniveau $\geq 1$.
Proposition \ref{nn+1} uses Bloch-Ogus theory to relate this property to the coniveau of $\omega^{n+1}$.


\vspace{1em}

Fix $n\geq 1$ and let $X$ be a smooth variety 
over $\R$.
The coniveau spectral sequence (\ref{coniveau}) induces two maps $H^n(X,\mathbb{Z}_2(n))\stackrel{\phi}{\longrightarrow} H^n_{\nr}(X,\mathbb{Z}_2(n))$ and 
$K:=\Ker[H^{n+1}(X,\mathbb{Z}_2(n+1))\to H^{n+1}_{\nr}(X,\mathbb{Z}_2(n+1))]\stackrel{\psi}{\longrightarrow}H^1(X,\mathcal{H}_X^n(n+1))$.

Cup-product with 
$\omega$
gives morphisms $H^n(X,\mathbb{Z}_2(n))\stackrel{\omega}{\longrightarrow}H^{n+1}(X,\mathbb{Z}_2(n+1))$ and $H^n_{\nr}(X,\mathbb{Z}_2(n))\stackrel{\omega}{\longrightarrow}H^{n+1}_{\nr}(X,\mathbb{Z}_2(n+1))$. Let $I:=\{\alpha\in H^n(X,\mathbb{Z}_2(n))\mid \alpha\cdot\omega\in K\}$ and
$I_{\nr}:=\{\alpha\in H^n_{\nr}(X,\mathbb{Z}_2(n))\mid \alpha\cdot\omega=0\}$.

Finally, Proposition \ref{bkrc} gives an exact sequence of sheaves on $X$:
\begin{equation}\label{exactn}
0\to\mathcal{H}_X^n(n+1)\to\pi_*\mathcal{H}_{X_{\C}}^n\to
\mathcal{H}_X^{n}(n)\stackrel{\omega}{\longrightarrow}\mathcal{H}_X^{n+1}(n+1)\to\cdots
\end{equation}
Taking cohomology, we obtain an exact sequence:
\begin{equation}\label{noyaudelta}
0\to H^n_{\nr}(X,\mathbb{Z}_2(n+1))\to H^n_{\nr}(X_{\C},\mathbb{Z}_2)\to I_{\nr}\stackrel{\delta}{\longrightarrow}H^1(X,\mathcal{H}_X^n(n+1)).
\end{equation}

\begin{lem}\label{commute} Let $X$ be a smooth variety
over $\R$.
The diagram 
\begin{equation*}
\xymatrix{
I \ar[r]^-{\omega}\ar[d]_-{\phi}
&  K\ar[d]^-{\psi}  \\
I_{\nr}\ar[r]^-{\delta}
& H^1(X,\mathcal{H}_X^n(n+1))
}
\end{equation*}
constructed above commutes.
\end{lem}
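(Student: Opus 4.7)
The plan is to exhibit an explicit \v{C}ech $1$-cocycle with values in $\mathcal{H}^n_X(n+1)$ and show that it represents both $\delta(\phi(\alpha))$ and $\psi(\alpha\cdot\omega)$. Let $\alpha\in I$. Since $\alpha\cdot\omega\in K$ has trivial image in $H^{n+1}_{\nr}(X,\mathbb{Z}_2(n+1))$, I can find a Zariski open cover $\{U_i\}$ of $X$ such that $(\alpha\cdot\omega)|_{U_i}=0$ in $H^{n+1}(U_i,\mathbb{Z}_2(n+1))$. Applying (\ref{rc}) on each $U_i$ with $j=n-1$, and using the parity identification $\mathbb{Z}_2(n-1)\simeq\mathbb{Z}_2(n+1)$, this vanishing yields classes $\tilde\alpha_i\in H^n(U_{i,\C},\mathbb{Z}_2)$ with $\pi_*(\tilde\alpha_i)=\alpha|_{U_i}$. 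On $U_{ij}$ the difference $\tilde\alpha_i-\tilde\alpha_j$ lies in $\ker\pi_*$, hence equals $\pi^*(\beta_{ij})$ for some $\beta_{ij}\in H^n(U_{ij},\mathbb{Z}_2(n+1))$. Although the \'etale class $\beta_{ij}$ is only determined modulo the image of $\omega\colon H^{n-1}(U_{ij},\mathbb{Z}_2(n))\to H^n(U_{ij},\mathbb{Z}_2(n+1))$, its sheafification $[\beta_{ij}]\in\mathcal{H}^n_X(n+1)(U_{ij})$ is canonical because $\mathcal{H}^n_X(n+1)\hookrightarrow\pi_*\mathcal{H}^n_{X_{\C}}$ is injective by Proposition~\ref{bkrc}, and the cocycle condition on triple intersections is immediate.

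Next, I would identify $\delta(\phi(\alpha))$ with the class $[\{[\beta_{ij}]\}]$ in $H^1(X,\mathcal{H}^n_X(n+1))$ by unwinding the standard cocycle formula for the connecting map of the short exact sequence of Zariski sheaves $0\to\mathcal{H}^n_X(n+1)\to\pi_*\mathcal{H}^n_{X_{\C}}\to\mathcal{J}\to 0$ underlying (\ref{noyaudelta}). Indeed, the sheafifications $[\tilde\alpha_i]\in(\pi_*\mathcal{H}^n_{X_{\C}})(U_i)$ are local lifts of $\phi(\alpha)|_{U_i}\in\mathcal{J}(U_i)$, and their differences $[\tilde\alpha_i]-[\tilde\alpha_j]$ are by construction the images of $[\beta_{ij}]$ in $\pi_*\mathcal{H}^n_{X_{\C}}(U_{ij})$.

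The main obstacle is the identification $\psi(\alpha\cdot\omega)=[\{[\beta_{ij}]\}]$, which requires describing the coniveau edge map concretely at the \v{C}ech level. My approach would be to regard the cup product with $\omega$ as the connecting morphism of the distinguished triangle of complexes of Zariski sheaves on $X$ obtained by applying the derived pushforward from the \'etale to the Zariski topology to the short exact sequence of \'etale sheaves $0\to\mathbb{Z}_2(n+1)\to\pi_*\mathbb{Z}_2\to\mathbb{Z}_2(n)\to 0$. This triangle simultaneously recovers (\ref{borc}) on cohomology sheaves and cup product with $\omega$ on hypercohomology, and its second hypercohomology spectral sequence is the coniveau spectral sequence (\ref{coniveau}). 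Setting up a \v{C}ech bicomplex for the cover $\{U_i\}$ against a Godement-type resolution and tracing the triangle's boundary through \v{C}ech degree identifies the image of $\alpha\cdot\omega$ in $E^{1,n}_\infty\subseteq H^1(X,\mathcal{H}^n_X(n+1))$ with the class constructed from the local primitives $\tilde\alpha_i$, namely $\{[\beta_{ij}]\}$. Combining the two identifications gives the commutativity of the square.
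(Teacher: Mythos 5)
Your construction of the \v{C}ech cocycle $\{[\beta_{ij}]\}$ is sound: the canonicity of $[\beta_{ij}]$ via the injectivity of $\mathcal{H}^n_X(n+1)\hookrightarrow\pi_*\mathcal{H}^n_{X_{\C}}$ from Proposition~\ref{bkrc} is a correct observation, and the identification of $[\{[\beta_{ij}]\}]$ with $\delta(\phi(\alpha))$ by the standard \v{C}ech description of the connecting map of (\ref{noyaudelta}) is fine (up to a sign that is harmless since everything is killed by $2$). The gap is in the last step, and it is exactly where the difficulty of the lemma lives. The map $\psi$ is by definition the edge map of the \emph{coniveau} spectral sequence (\ref{coniveau}); concretely, $\psi(\alpha\cdot\omega)$ is computed by lifting $\alpha\cdot\omega$ to a class with support on a divisor $D$ and applying purity/Gysin at the codimension-one points, which is how it enters the proof of Proposition~\ref{nn+1} (``coniveau $\geq 2$'' is a statement about the coniveau filtration). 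Your \v{C}ech--Godement chase computes instead the edge map of the Leray (hypercohomology) spectral sequence for the change of topology $X_{\et}\to X_{\mathrm{Zar}}$. The assertion that ``its second hypercohomology spectral sequence is the coniveau spectral sequence (\ref{coniveau})'' is precisely the point at issue: Bloch--Ogus only identify the $E_2$-terms (via the Cousin resolution (\ref{Cousin})); the identification of the two spectral sequences as filtered objects together with their edge maps is a genuine comparison theorem (due to Paranjape, and later D\'eglise), which you neither prove nor cite. Without it, your chase identifies $\delta(\phi(\alpha))$ with the Leray image of $\alpha\cdot\omega$, not with $\psi(\alpha\cdot\omega)$, and the square is not yet shown to commute for the map actually used in the paper.

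Note also where the cost has gone. The paper stays entirely inside the Cousin/coniveau framework: it represents both $\delta\circ\phi(\alpha)$ and $\psi(\alpha\cdot\omega)$ by classes at codimension-one points ($\beta$ and $\gamma$), obtained by chasing a diagram whose rows are localization sequences and whose columns are instances of (\ref{rc}); the nontrivial input is the compatibility, up to sign, of the two composite boundary maps, which is supplied by Jannsen's homological algebra lemma on a $3\times 3$ diagram of distinguished triangles. In your approach that particular compatibility becomes a formal total-differential computation in a double complex, but the nonformal content resurfaces as the coniveau-versus-Leray comparison. Either supply that comparison (with a precise reference covering the edge maps in bidegrees $(0,n)$ and $(1,n)$, and the filtration in degree $n+1$), or reformulate $\psi$ and the coniveau conditions in Proposition~\ref{nn+1} consistently in the Leray picture; as written, the argument does not close.
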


\begin{proof}
Let $\alpha\in I$. By hypothesis, the class $\alpha\cdot\omega\in H^{n+1}(X,\mathbb{Z}_2(n+1))$ vanishes on an open subset $U\subset X$. Let $D:=X\setminus U$ be endowed with its reduced structure.

The description of $H^1(X,\mathcal{H}_X^n(n+1))$ as a cohomology group of the Cousin complex (\ref{Cousin}) shows that if $X^{\circ}\subset X$ is an open subset whose complement has codimension $\geq 2$, the restriction $H^1(X,\mathcal{H}_X^n(n+1))\to H^1(X^{\circ},\mathcal{H}_{X^{\circ}}^n(n+1))$ is injective. Consequently, to prove that $\psi(\alpha\cdot\omega)=\delta\circ\phi(\alpha)$, it is possible to remove from $X$ a closed subset of codimension $\geq 2$. 
This allows to suppose that $D$ is smooth of pure codimension $1$.

  Our next task is to identify concretely $\delta\circ\phi(\alpha)$. The cohomology theory with supports in the sense of \cite[Definition 5.1.1]{BOG} that to a variety $X$ over $\R$ and a closed subset $Z\subset X$ associates the groups $H_Z^k(X,\pi_*\mathbb{Z}_2)=H^k_{Z_{\C}}(X_{\C},\mathbb{Z}_2)$ satisfies axioms COH1 and COH3 by \cite[5.5 (1)]{BOG}, hence COH2 by \cite[Proposition 5.3.2]{BOG}. It follows from \cite[Corollary 5.1.11]{BOG} that the sheafification of $U\mapsto H^n(U_{\C},\mathbb{Z}_2)$ (that is $\pi_*\mathcal{H}^n_{X_{\C}}$ by Proposition \ref{propCTS}) admits a Cousin resolution by flasque sheaves, and the same goes for $\pi_*\mathcal{H}^{n+1}_{X_{\C}}$. These resolutions fit together with the Cousin resolutions (\ref{Cousin}) of $\mathcal{H}_X^n(n+1)$, $\mathcal{H}_X^n(n)$, $\mathcal{H}_X^{n+1}(n+1)$ and $\mathcal{H}_X^{n+1}(n)$, giving rise to a diagram, that is an exact sequence of flasque resolutions for the exact sequence of sheaves (\ref{exactn}) by Proposition \ref{bkrc2}. Let us only draw the relevant part of the diagram, containing the Cousin resolutions for $\mathcal{H}_X^n(n+1)$, $\pi_*\mathcal{H}^{n+1}_{X_{\C}}$ and $\mathcal{H}_X^n(n)$:

\begin{equation}
\begin{gathered}\label{Cousindiag}
\xymatrix{
\bigoplus\limits_{z\in X^{(0)}}\iota_{z_{*}}H^n_{\to}(z,\mathbb{Z}_2(n+1))\ar[r]\ar[d]& \bigoplus\limits_{z\in X^{(1)}}\iota_{z_{*}}H^{n-1}_{\to}(z,\mathbb{Z}_2(n)) \ar[r]\ar[d] &\dots\\
\bigoplus\limits_{z\in X^{(0)}}\iota_{z_{*}}H^n_{\to}(z,\pi_*\mathbb{Z}_2)\ar[r]\ar[d] &\bigoplus\limits_{z\in X^{(1)}}\iota_{z_{*}}H^{n-1}_{\to}(z,\pi_*\mathbb{Z}_2) \ar[r]\ar[d]& \dots\\
\bigoplus\limits_{z\in X^{(0)}}\iota_{z_{*}}H^n_{\to}(z,\mathbb{Z}_2(n))\ar[r] &\bigoplus\limits_{z\in X^{(1)}}\iota_{z_{*}}H^{n-1}_{\to}(z,\mathbb{Z}_2(n-1)) \ar[r]& \dots\\
}
\end{gathered}
\end{equation}



It is now possible to give a description of $\delta\circ\phi(\alpha)$ by a diagram chase in the diagram obtained by taking the global sections of (\ref{Cousindiag}). More precisely, $\alpha$ induces a class $\phi(\alpha)\in \Ker[\bigoplus_{z\in X^{(0)}}H^n_{\to}(z,\mathbb{Z}_2(n))\to\bigoplus_{z\in X^{(1)}}H^{n-1}_{\to}(z,\mathbb{Z}_2(n-1))]$. Lifting it in $\bigoplus_{z\in X^{(0)}}H^n_{\to}(z,\pi_*\mathbb{Z}_2)$ by the hypothesis that $\alpha\in I$, pushing it to $\bigoplus_{z\in X^{(1)}}H^{n-1}_{\to}(z,\pi_*\mathbb{Z}_2)$ and lifting it again to 
$\bigoplus_{z\in X^{(1)}}H^{n-1}_{\to}(z,\mathbb{Z}_2(n))$ gives a cohomology class of degree one of the complex of global sections of the Cousin resolution of $\mathcal{H}_X^n(n+1)$ representing $\delta\circ\phi(\alpha)\in H^1(X,\mathcal{H}_X^n(n+1))$.

At this point, consider the following commutative diagram, whose rows are exact sequences of cohomology with support, whose columns are instances of (\ref{rc}), and where the coefficient ring $\mathbb{Z}_2$ has been omitted:
\begin{equation*}
\xymatrix{
 & &H^n(U,n+1)\ar[r]\ar[d]&H^{n-1}(D,n)\ar[d]^{\pi^*}\\
& &H^{n}(U_{\C})\ar[d]^{\pi_*}\ar[r]^{\partial}&H^{n-1}(D_{\C})\ar[d]\\
H^{n-2}(D,n-1)\ar[d]\ar[r]& H^{n}(X,n)\ar^{\omega}[d]\ar[r]^{j^*}&H^{n}(U,n)\ar[d]\ar[r]&H^{n-1}(D,n-1)&\\
H^{n-1}(D,n)\ar[d]\ar[r]^{i_*}& H^{n+1}(X,n+1)\ar[r]&H^{n+1}(U,n+1)&\\
H^{n-1}(D_{\C})& &&\\
}
\end{equation*}
Here, we have denoted by $i:D\to X$ and $j:U\to X$ the inclusions, and by $\partial$ the residue map.
By our choice of $U$, $\alpha\in H^n(X,\mathbb{Z}_2(n))$ vanishes in $H^{n+1}(U,\mathbb{Z}_2(n+1))$. Chasing the diagram, there are two ways to construct a (not well-defined) class in $H^{n-1}(D,\mathbb{Z}_2(n))$. First, we may consider a class $\beta\in H^{n-1}(D,\mathbb{Z}_2(n))$ such that $i_*\beta=\alpha\cdot\omega$. Second, we may lift $j^*\alpha$ along $\pi_*$, apply the residue map $\partial$, and lift the resulting class along $\pi^*$ to obtain $\gamma\in H^{n-1}(D,\mathbb{Z}_2(n))$.

Our diagram has been constructed from the diagram of distinguished triangles in the derived category of $2$-adic sheaves on $X$:
\begin{equation*}
\xymatrix{
 i_*Ri^!\mathbb{Z}_2(n+1)\ar[d]\ar[r]&\mathbb{Z}_2(n+1)\ar[d]\ar[r]&Rj_*j^*\mathbb{Z}_2(n+1)\ar[d]\ar[r]&\\
 i_*Ri^!\pi_*\mathbb{Z}_2\ar[d]\ar[r]&\pi_*\mathbb{Z}_2\ar[d]\ar[r]&Rj_*j^*\pi_*\mathbb{Z}_2\ar[d]\ar[r]&\\
 i_*Ri^!\mathbb{Z}_2(n)\ar[d]\ar[r]&\mathbb{Z}_2(n)\ar[d]\ar[r]&Rj_*j^*\mathbb{Z}_2(n)\ar[d]\ar[r]&\\
&&&\\
}
\end{equation*}
  A homological algebra lemma due to Jannsen \cite[Lemma p. 268]{Jannsenletter}, applied exactly as in \cite[Proof of Theorem 2]{Jannsenletter}, shows that the images of $\beta$ and $\gamma$ in $H^{n-1}(D_{\C},\mathbb{Z}_2)$, that are well-defined up to the image of $H^n(U,\mathbb{Z}_2(n+1))$, coincide up to a sign. It follows that $\beta$ and $\gamma$, well-defined up to the images of $H^n(U,\mathbb{Z}_2(n+1))$ and $H^{n-2}(D,\mathbb{Z}_2(n-1))$ in $H^{n-1}(D,\mathbb{Z}_2(n))$, coincide up to a sign.

Now notice that $\beta$ and $\gamma$ induce classes in $\bigoplus_{z\in X^{(1)}}H^{n-1}_{\to}(z,\mathbb{Z}_2(n))$. Our explicit description of $\delta\circ\phi(\alpha)$, shows that $\beta$ is a representative of it as a cohomology class of degree one of the Cousin complex.
On the other hand, $\gamma$ has been constructed by lifting $\alpha\cdot\omega$ along the Gysin morphism $H^{n-1}(D,\mathbb{Z}_2(n))\to H^{n+1}(X,\mathbb{Z}_2(n+1))$. By construction of the coniveau spectral sequence (\cite[\S 3]{BO}, \cite[\S 1]{BOG}), $\gamma$ 
is a representative of $\psi(\alpha\cdot\omega)$ as a cohomology class of degree one of the Cousin complex.

At this point, we have proven that $\psi(\alpha\cdot\omega)=[\gamma]=-[\beta]=-\delta\circ\phi(\alpha)$. Since this element is $2$-torsion because $\omega$ is, one has in fact $\psi(\alpha\cdot\omega)=\delta\circ\phi(\alpha)$, as wanted.
\end{proof}

\begin{prop}\label{nn+1}
Let $X$ be a smooth projective variety over $\R$, and fix $n\geq 1$.
Consider the following assertions:
\begin{enumerate}[(i)]
\item The class $\omega^n\in H^n(X,\mathbb{Z}_2(n))$ has coniveau $\geq 1$.
\item The class $\omega^{n+1}\in H^{n+1}(X,\mathbb{Z}_2(n+1))$ has coniveau $\geq 2$.
\end{enumerate}
Then (i) implies (ii). Moreover, if $\CH_0(X_{\C})$ is supported on a closed subvariety of $X_{\C}$ of dimension $n-1$, the converse holds.
\end{prop}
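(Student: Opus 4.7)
The plan is to use Lemma~\ref{commute} together with the exact sequence~(\ref{noyaudelta}). Recall from the coniveau spectral sequence~(\ref{coniveau}) that $\omega^n$ has coniveau $\geq 1$ if and only if $\phi(\omega^n) = 0$, and that $\omega^{n+1}$ has coniveau $\geq 2$ if and only if $\omega^{n+1} \in K$ and $\psi(\omega^{n+1}) = 0$.

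For the implication (i)$\Rightarrow$(ii), assume $\phi(\omega^n) = 0$. By multiplicativity of the coniveau filtration under cup products, $\omega^{n+1} = \omega \cdot \omega^n$ has coniveau $\geq 1$, so $\omega^{n+1} \in K$ and $\omega^n \in I$. Lemma~\ref{commute} applied to $\alpha = \omega^n$ then yields $\psi(\omega^{n+1}) = \delta(\phi(\omega^n)) = 0$, so $\omega^{n+1}$ has coniveau $\geq 2$.

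For the converse under the $\CH_0$ hypothesis, assume $\omega^{n+1}$ has coniveau $\geq 2$. Then $\omega^{n+1} \in K$, hence $\omega^n \in I$, and $\psi(\omega^{n+1}) = 0$; Lemma~\ref{commute} gives $\delta(\phi(\omega^n)) = 0$. The exact sequence~(\ref{noyaudelta}) then exhibits $\phi(\omega^n) = \pi_*(\tilde\alpha)$ for some $\tilde\alpha \in H^n_{\nr}(X_{\C}, \mathbb{Z}_2)$, reducing matters to showing $\tilde\alpha = 0$. The Bloch--Srinivas diagonal decomposition on $X_{\C}$ supplies an integer $N \geq 1$, a closed subvariety $W \subset X_{\C}$ of dimension $n-1$, a divisor $D \subset X_{\C}$, and cycles $Z_1 \subset X_{\C} \times W$, $Z_2 \subset D \times X_{\C}$ with $N[\Delta_{X_{\C}}] = [Z_1] + [Z_2]$. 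Applied to $\tilde\alpha$: the correspondence $Z_2$ lands in the coniveau-$\geq 1$ part of $H^n(X_{\C}, \mathbb{Z}_2)$ and thus acts by zero on $H^n_{\nr}(X_{\C}, \mathbb{Z}_2)$; the correspondence $Z_1$ factors the action through $H^n_{\nr}(W, \mathbb{Z}_2) \subset H^n(\C(W), \mathbb{Z}_2)$, which vanishes since $\C(W)$ has cohomological dimension $\dim W = n-1$. Hence $N\tilde\alpha = 0$ in $H^n_{\nr}(X_{\C}, \mathbb{Z}_2)$.

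To conclude, apply Corollary~\ref{sstorsion} to the smooth $\R$-variety $X_{\C}$ with $k = n-1$: the group $H^n_{\nr}(X_{\C}, \mathbb{Z}_2(n-1)) = H^n_{\nr}(X_{\C}, \mathbb{Z}_2)$ (the Tate twist being trivial on the \'etale cohomology of a $\C$-variety) is torsion free. The $N$-torsion element $\tilde\alpha$ therefore vanishes, giving $\phi(\omega^n) = 0$ and establishing (i). The main obstacle is to make rigorous the factorization of the $Z_1$-correspondence action on unramified cohomology through $H^n_{\nr}(W, \mathbb{Z}_2)$, which in principle requires the Rost cycle-module formalism (or equivalent functoriality of Bloch--Ogus theory under correspondences, possibly after passing to a resolution of singularities of $W$).
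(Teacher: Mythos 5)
Your argument is correct and follows essentially the same route as the paper: both implications come from Lemma~\ref{commute} together with the exact sequence~(\ref{noyaudelta}), with the converse reduced to the vanishing of $H^n_{\nr}(X_{\C},\mathbb{Z}_2)$ under the $\CH_0$ hypothesis. The only difference is that the paper simply quotes \cite[Proposition 3.3 (ii)]{CTV} for that vanishing (observing that the diagonal-decomposition argument carries over to $2$-adic coefficients), whereas you sketch the Bloch--Srinivas argument yourself and correctly flag the one technical point (functoriality of unramified cohomology under correspondences) that the citation takes care of.
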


\begin{proof}
Either (i) or (ii) implies that $\omega^{n+1}$ has coniveau $\geq 1$, or equivalently that it vanishes in $H^{n+1}_{\nr}(X,\mathbb{Z}_2(n+1))$. Let  us suppose this is the case: in particular, $\omega^n\in I$. 

 By the coniveau spectral sequence (\ref{coniveau}), $\omega^n$ has coniveau $\geq 1$ in $X$ if and only if its class in $H^{n}_{\nr}(X,\mathbb{Z}_2(n))$ vanishes, and $\omega^{n+1}$ has coniveau $\geq 2$ if and only if its class in $H^1(X,\mathcal{H}_X^n(n+1))$ vanishes. Then consider the diagram:
\begin{equation*}
\xymatrix{
H^n(\R,\mathbb{Z}_2(n)) \ar[r]^-{\omega}_-{\cong}\ar[d]
&  H^{n+1}(\R,\mathbb{Z}_2(n+1))\ar[d]  \\
I_{\nr}\ar[r]^-{\delta}
& H^1(X,\mathcal{H}_X^n(n+1)),
}
\end{equation*}
that is commutative by Lemma \ref{commute}.
Contemplating it shows that (i) implies (ii). 

Conversely, if $CH_0(X_{\C})$ is supported on a closed subvariety of $X_{\C}$ of dimension $n-1$, we have $H^n_{\nr}(X_{\C},\mathbb{Z}_2)=0$ by \cite[Proposition 3.3 (ii)]{CTV}. Indeed, the argument given there for Betti cohomology over $\mathbb{C}$, that relies on decomposition of the diagonal, works as well for $2$-adic cohomology over $\C$. It then follows from the exact sequence (\ref{noyaudelta}) that $\delta$ is injective, proving that (ii) implies (i).
\end{proof}

\section{Cohomology of smooth double covers}\label{s4}

Recall the notation of paragraph \ref{parsquares}. The polynomial $F\in\R[X_0,\dots,X_n]$ is the homogenization of a nonzero positive semidefinite polynomial $f\in \R[X_1,\dots,X_n]$. Its degree $d$ is even. We introduced the double cover $Y$ of $\mathbb{P}^n_{\R}$ ramified over $\{F=0\}$ defined by the equation $Y:=\{Z^2+F=0\}$. 
  
In all this section, we make the additional hypothesis that $\{F=0\}$ is smooth so that $Y$ is smooth. By Lemma \ref{rev}, $Y(\R)=\varnothing$. The main goal of this section is to prove Propositions \ref{vanisheven2} and \ref{combiodd}.

\subsection{Geometric cohomology}
We first collect the results on the cohomology of $Y_{\C}$ that we will need. They follow from general theorems on the cohomology of weighted complete intersections due to Dimca \cite{Dimca}. When $n=3$, we could also have applied \cite[Corollary 1.19 and Lemma 1.23]{Clemens}.

\begin{prop}\label{cohogeo}
 Let $H_{\C}\in H^2(Y_{\C},\mathbb{Z}_2(1))$ be the class of $\mathcal{O}_{\mathbb{P}^n_{\C}}(1)$.
\begin{enumerate}[(i)]
\item The cohomology groups $H^k(Y_{\C},\mathbb{Z}_2)$ have no torsion.
\item If $k\neq n$ is odd, $H^k(Y_{\C},\mathbb{Z}_2)=0$.
\item If $0\leq l< n/2$,  $H^{2l}(Y_{\C},\mathbb{Z}_2)\simeq\mathbb{Z}_2(l)$ as a $G$-module, and is generated by $H_{\C}^l$.
\item If $n/2<l\leq n$,  $H^{2l}(Y_{\C},\mathbb{Z}_2)\simeq\mathbb{Z}_2(l)$ as a $G$-module, and has a generator $\alpha_{l}$ such that $2\alpha_{l}=H_{\C}^l$.
\end{enumerate}
\end{prop}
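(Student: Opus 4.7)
The plan is to realize $Y$ as a smooth (equivalently, quasi-smooth) hypersurface of degree $d$ in the weighted projective space $\mathbb{P}:=\mathbb{P}_{\R}(\underbrace{1,\dots,1}_{n+1},d/2)$ via the quasi-homogeneous equation $Z^2+F$ (where $Z$ has weight $d/2$ and the $X_i$ have weight $1$), and then to invoke Dimca's cohomology theorems for quasi-smooth weighted hypersurfaces, together with Poincar\'e duality on $Y$ and a degree computation for the double cover $\pi\colon Y\to\mathbb{P}^n_{\R}$. The smoothness of $Y$, and hence its quasi-smoothness, is verified by the Jacobian criterion as in the proof of Lemma \ref{rev}; moreover the unique singular point $[0:\dots:0:1]$ of $\mathbb{P}$ (when $d/2>1$) does not lie on $Y$ since $F$ vanishes but $Z^2=1$ there, so $Y$ is contained in the smooth locus of $\mathbb{P}$.

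Dimca's Lefschetz hyperplane theorem for quasi-smooth weighted hypersurfaces gives that restriction $H^k(\mathbb{P}_{\C},\mathbb{Z}_2)\to H^k(Y_{\C},\mathbb{Z}_2)$ is an isomorphism for $k<n$, and his torsion-freeness statement handles $H^n(Y_{\C},\mathbb{Z})$. The cohomology of the ambient $\mathbb{P}_{\C}$ is torsion-free, concentrated in even degrees, and of rank one in each degree $2l$ with $0\leq l\leq n+1$ (standard for weighted projective spaces with a single isolated singularity). Combined with Poincar\'e duality on $Y_{\C}$, this yields (i), (ii), and that $H^{2l}(Y_{\C},\mathbb{Z}_2)\cong\mathbb{Z}_2$ for $0\leq l\leq n$. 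For (iii) I would note that $c_1(\mathcal{O}_{\mathbb{P}}(1))$ restricts on the linear subspace $\{Z=0\}\simeq\mathbb{P}^n\subset\mathbb{P}$ to the ordinary hyperplane class, hence is a generator of $H^2(\mathbb{P}_{\C},\mathbb{Z}_2)$, and likewise its powers $c_1(\mathcal{O}_{\mathbb{P}}(1))^l$ generate $H^{2l}(\mathbb{P}_{\C},\mathbb{Z}_2)$ for $l\leq n$; the Lefschetz isomorphism then sends these to $H_{\C}^l$ in $H^{2l}(Y_{\C},\mathbb{Z}_2)$.

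For (iv) I define $\alpha_l\in H^{2l}(Y_{\C},\mathbb{Z}_2)$ to be the generator dual under Poincar\'e duality to $H_{\C}^{n-l}$ (which generates $H^{2(n-l)}(Y_{\C},\mathbb{Z}_2)$ by (iii), since $n-l<n/2$), characterized by $\alpha_l\cdot H_{\C}^{n-l}=[\mathrm{pt}]_{Y_{\C}}$. The degree-two calculation $\pi^*h^n=2\,[\mathrm{pt}]_{Y_{\C}}$ (where $h:=c_1(\mathcal{O}_{\mathbb{P}^n_{\C}}(1))$), valid since the preimage of a generic point of $\mathbb{P}^n_{\C}$ consists of two points, gives $H_{\C}^n=\pi^*h^n=2\,[\mathrm{pt}]_{Y_{\C}}$. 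Therefore $(H_{\C}^l-2\alpha_l)\cdot H_{\C}^{n-l}=H_{\C}^n-2\,[\mathrm{pt}]_{Y_{\C}}=0$ in the torsion-free group $H^{2n}(Y_{\C},\mathbb{Z}_2)$, and the perfectness of the Poincar\'e pairing forces $H_{\C}^l=2\alpha_l$.

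The main obstacle is transferring Dimca's integral statements (typically phrased in terms of Betti cohomology) into the $2$-adic \'etale setting and pinning down the explicit generators of the ambient weighted projective space's cohomology. The former is not serious because, for smooth complex varieties, Artin's comparison theorem matches Betti and $\ell$-adic cohomology compatibly with all relevant structures (cup product, Poincar\'e duality, cycle class); the latter reduces, via the inclusion $\mathbb{P}^n\hookrightarrow\mathbb{P}$ of the $\{Z=0\}$-locus, to the elementary fact that $c_1(\mathcal{O}_{\mathbb{P}}(1))$ restricts to the primitive class $h\in H^2(\mathbb{P}^n_{\C},\mathbb{Z}_2)$.
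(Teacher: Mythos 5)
Your overall architecture (reduce to Betti cohomology over $\mathbb{C}$, quote Dimca for the quasi-smooth weighted hypersurface $Y\subset\mathbb{P}:=\mathbb{P}(1,\dots,1,d/2)$, then use Poincar\'e duality and $\deg\pi=2$ to produce $\alpha_l$) is close to the paper's, and your treatment of (i), (ii) and (iv) is essentially sound. But the justification of (iii) has a genuine gap: the integral (or $\mathbb{Z}_2$-) Lefschetz restriction isomorphism $H^{2l}(\mathbb{P}_{\C},\mathbb{Z}_2)\to H^{2l}(Y_{\C},\mathbb{Z}_2)$ from the \emph{singular} ambient space that you invoke is false here, and even the input to it is wrong. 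First, $\mathcal{O}_{\mathbb{P}}(1)$ is not locally free at the singular point $[0:\dots:0:1]$ when $d/2>1$, so it has no first Chern class in $H^2(\mathbb{P}_{\C},\mathbb{Z})$. Second, one can compute what the actual generator does: the local cohomology of $\mathbb{P}_{\C}$ at its singular point is the reduced cohomology of the lens space $S^{2n+1}/\mu_{d/2}$ shifted by one, so the long exact sequence of the pair, together with Kawasaki's theorem ($H^{\mathrm{odd}}(\mathbb{P}_{\C},\mathbb{Z})=0$, $H^{2l}\cong\mathbb{Z}$), forces the restriction $H^{2l}(\mathbb{P}_{\C},\mathbb{Z})\to H^{2l}(\mathbb{P}_{\C}\setminus\{\mathrm{pt}\},\mathbb{Z})\cong\mathbb{Z}\cdot h^l$ to have image $\tfrac{d}{2}\mathbb{Z}\cdot h^l$ for $1\leq l\leq n$ (for $n=1$, $d=4$ this is the familiar fact that the generator of $H^2$ of the quadric cone restricts to twice the hyperplane class on a ruling). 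Since $Y$ sits inside $\mathbb{P}\setminus\{\mathrm{pt}\}$ and $H_{\C}=\pi^*h$, the image of $H^{2l}(\mathbb{P}_{\C},\mathbb{Z})$ in $H^{2l}(Y_{\C},\mathbb{Z})$ is $\tfrac{d}{2}\mathbb{Z}\cdot H_{\C}^l$; when $\tfrac{d}{2}$ is even this is a proper subgroup of $\mathbb{Z}_2\cdot H_{\C}^l$ after tensoring with $\mathbb{Z}_2$, so the restriction map cannot be surjective, let alone identify a generator with $H_{\C}^l$. (This also shows that the integral Lefschetz isomorphism you want cannot hold: the usual proof via Artin vanishing plus duality on the affine complement $\mathbb{P}\setminus Y$ breaks precisely because that complement contains the singular point.)

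What you can legitimately extract from Dimca is the torsion-freeness of $H^*(Y_{\C},\mathbb{Z})$ and the equality of Betti numbers $b_k(Y_{\C})=b_k(\mathbb{P}^n_{\C})$ for $k\neq n$; this gives (i), (ii) and $H^{2l}(Y_{\C},\mathbb{Z}_2)\cong\mathbb{Z}_2$, but not yet the generator. To pin down that $H_{\C}^l$ generates for $l<n/2$ you need a separate argument. The paper's is a one-line intersection computation: if $H_{\C}^l=2\xi$ then $\xi\cdot\xi\cdot H_{\C}^{n-2l}=\tfrac14 H_{\C}^n=\tfrac12$, which is not an integer. (Alternatively, one could use the correct form of the Lefschetz-type statement for branched covers, namely that $\pi^*:H^k(\mathbb{P}^n_{\C},\mathbb{Z})\to H^k(Y_{\C},\mathbb{Z})$ is an isomorphism for $k<n$ --- but that is pullback along $\pi$ from the smooth $\mathbb{P}^n$, not restriction from the singular weighted projective space, and the two differ integrally by exactly the factor $\tfrac{d}{2}$ computed above.) With (iii) repaired, your derivation of (iv) via Poincar\'e duality and $H_{\C}^n=2[\mathrm{pt}]$ coincides with the paper's and is correct.
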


\begin{proof}
It suffices to prove the equalities as $\mathbb{Z}_2$-modules (this means that is it is possible to forget the twist indicating the action of $G$), because one recovers the correct twist by noticing that the relevant cohomology groups are rationally generated by algebraic cycles.

Using the fact that $Y_{\C}$ is defined over an algebraically closed subfield that may be embedded in $\mathbb{C}$  together with the invariance of \'etale cohomology under an extension of algebraically closed fields, it suffices to prove the lemma when $\C=\mathbb{C}$. Moreover, by comparison with Betti cohomology, it suffices to prove it for Betti cohomology. 

Since $Y_{\C}$ is a strongly smooth weighted complete intersection in the sense of \cite{Dimca}, its cohomology groups have no torsion by \cite[Proposition 6 (ii)]{Dimca}. Moreover, its Betti numbers in degree $k\neq n$ are computed in \cite[Proposition 6 (i)]{Dimca}.

If $l<n/2$, the class $H_{\C}^l\in H^{2l}(Y_{\C},\mathbb{Z}_2)$ cannot be divisible by $2$ because the intersection product $\frac{H_{\C}^l}{2}\cdot\frac{H_{\C}^l}{2}\cdot H_{\C}^{n-2l}=\frac{1}{2}$ would not be an integer. It follows that $H_{\C}^l$ generates $H^{2l}(Y_{\C},\mathbb{Z}_2)$.

If $l>n/2$, since $H_{\C}^l\cdot H_{\C}^{n-l}=2$, it follows by Poincar\'e duality that $H^{2l}(Y_{\C},\mathbb{Z}_2)$ is generated by a class $\alpha_l$ such that $2\alpha_l=H_{\C}^l$.
\end{proof}

\subsection{Preparation for a deformation argument}\label{parsemialg}

In the next paragraphs, we will perform some computations on the cohomology of $Y$. One of the arguments we will use, in the proofs of Lemma \ref{vanisheven1} and Proposition \ref{combiodd}, is a reduction to the Fermat double cover $Y^{\dagger}:=\{Z^2+F^{\dagger}=0\}$, where $F^{\dagger}:=X_0^d+\dots+X_n^d$ is the Fermat equation. This deformation argument relies on a little bit of semi-algebraic geometry. We have found it more convenient to collect here the relevant lemmas.

Let $V:=\R[X_0,\dots,X_n]_d$ be space of the degree $d$ homogeneous polynomials viewed as an algebraic variety over $\R$.
The discriminant $\Delta\subset V$ is the closed algebraic subvariety parametrizing equations that do not define smooth hypersurfaces in $\mathbb{P}^n_{\R}$. It is irreducible, and a general point of $\Delta$ defines a hypersurface with only one ordinary double point as singularities. Let $\Delta'\subset\Delta$ be the closed algebraic subvariety parametrizing singular hypersurfaces that do not have only one ordinary double point as singularities: it has codimension $\geq 2$ in $V$. We view the sets of $\R$-points $V(\R)$, $\Delta(\R)$ and $\Delta'(\R)$ as semi-algebraic sets. Define:
$$\Pi:=\{H\in V(\R)\mid H(x_0,\dots,x_n)>0 \text{ for every } (x_0,\dots,x_n)\in \R^{n+1}\setminus(0,\dots,0)\}.$$

\begin{lem}\label{open}
The set $\Pi\subset V(\R)$ is convex, open and semi-algebraic. Moreover, the polynomials $F$ and $F^{\dagger}$ belong to $\Pi$.
\end{lem}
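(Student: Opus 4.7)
The plan is to verify the five assertions (convex, semi-algebraic, $F^{\dagger} \in \Pi$, open, $F \in \Pi$) in turn. Convexity is immediate since a positive combination of strictly positive numbers is positive. Semi-algebraicity follows from the Tarski--Seidenberg principle applied to the first-order formula defining $\Pi$: the condition ``for all $x \in \R^{n+1}$, either $x = 0$ or $H(x) > 0$'' involves only polynomial inequalities in the coefficients of $H$. That $F^{\dagger} \in \Pi$ is also clear: since $d$ is even, each monomial $X_i^d$ is nonnegative and vanishes only when $X_i = 0$.

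Openness is the step I expect to require the most care, since there is no classical compactness argument available over an arbitrary real closed field. My plan is to exploit homogeneity to replace the quantifier over nonzero $x$ by a quantifier over the semi-algebraic sphere $S := \{x \in \R^{n+1} : \sum_i x_i^2 = 1\}$, so that $H \in \Pi$ is equivalent to $H(x) > 0$ for every $x \in S$. Since $S$ is closed and bounded, the projection $V(\R) \times S \to V(\R)$ is semi-algebraically proper, hence sends closed semi-algebraic subsets to closed semi-algebraic subsets (see \cite{BCR}). Applying this to the closed semi-algebraic set $\{(H, x) \in V(\R) \times S : H(x) \leq 0\}$ shows that its image $V(\R) \setminus \Pi$ is closed, so $\Pi$ is open.

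Finally, to show $F \in \Pi$ I would use the standing smoothness hypothesis on $\{F = 0\}$. Since $F$ is the homogenization of the positive semidefinite polynomial $f$, one has $F \geq 0$ on the Zariski-dense open subset $\{X_0 \neq 0\}$ of $\R^{n+1}$, and hence on all of $\R^{n+1}$. The crucial additional input is that if $F$ had a nontrivial real zero $p$, then $p$ would be a local minimum of the semi-algebraic function $F$, so all its partial derivatives would vanish at $p$, making $p$ a singular point of $\{F = 0\}$ --- contradicting the smoothness hypothesis. Thus $F$ is strictly positive away from the origin, i.e.\ $F \in \Pi$.
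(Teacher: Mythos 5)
Your proof is correct, and for convexity, semi-algebraicity, the openness argument (restricting to the unit sphere and using that the projection from a closed bounded semi-algebraic set has closed image, cf.\ \cite[Theorem 2.5.8]{BCR}), and the membership $F^{\dagger}\in\Pi$, it coincides with the paper's. The one genuine divergence is the last step, $F\in\Pi$: the paper deduces it from $Y(\R)=\varnothing$ (Lemma \ref{rev}), which itself rests on Proposition \ref{realff}, i.e.\ on the Artin--Lang theorem and the implicit function theorem over $\R$; you instead argue directly that a nontrivial real zero of $F$ would be a global minimum of the nonnegative polynomial $F$, hence a critical point, hence a singular point of the smooth hypersurface $\{F=0\}$. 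Your route is more elementary and self-contained (the first-order ``minimum implies vanishing gradient'' argument transfers to any real closed field), whereas the paper's reuses machinery it has already set up; both correctly exploit the standing smoothness hypothesis of Section \ref{s4}, without which the claim $F\in\Pi$ would be false.
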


\begin{proof}
It is immediate that $\Pi$ is convex.
We will rather prove that the complement of $\Pi$ is a closed semi-algebraic set.
By homogeneity of $H$, it coincides with the projection to $V(\R)$ of:
$$Q:=\{(H,x_0,\dots,x_n)\in V(\R)\times\mathbb{S}^n\mid H(x_0,\dots,x_n)\leq 0\},$$
where $\mathbb{S}^n:=\{(x_0,\dots,x_n)\in \R^{n+1}\mid x_0^2+\dots+x_n^2=1\}$ is the unit sphere.

That it is semi-algebraic follows from the Tarski-Seidenberg theorem \cite[Theorem 2.2.1]{BCR}. To check that it is closed, it suffices to check that its intersection with every closed hypercube in $V(\R)$ is closed, which follows from \cite[Theorem 2.5.8]{BCR}.

That $F^{\dagger}\in \Pi$ is clear. We know that $F\geq 0$ because it is positive semidefinite. Moreover, it cannot vanish on $\R^{n+1}\setminus(0,\dots,0)$ because
$Y(\R)\neq \varnothing$ by Lemma \ref{rev}. This shows that $F\in \Pi$.
\end{proof}

Now choose a general affine subspace $W\subset V$ of dimension $2$ that contains $F$ and $F^{\dagger}$.

\begin{lem}\label{finite}
The set $\Pi\cap W(\R)\cap\Delta(\R)$ is finite.
\end{lem}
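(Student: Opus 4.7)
The plan is to show that the intersection $\Pi\cap \Delta(\R)$ is already contained in the smaller semi-algebraic set $\Delta'(\R)$, and then to use the genericity of $W$ together with the codimension estimate $\codim_V\Delta'\geq 2$ to deduce finiteness on the two-dimensional slice $W(\R)$.

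The first step is the key geometric observation. Let $H\in \Pi\cap \Delta(\R)$. By definition of $\Pi$ we have $H(x_0,\dots,x_n)>0$ for every $(x_0,\dots,x_n)\in\R^{n+1}\setminus\{0\}$, so the projective hypersurface $\{H=0\}\subset\mathbb{P}^n_{\C}$ has no $\R$-points. In particular, every singular point of $\{H=0\}$ is non-real. Since the singular locus is defined over $\R$, it is stable under the action of $G=\Gal(\C/\R)$, and complex conjugation acts freely on it. Hence the singular locus cannot consist of a single point; it has either at least two points or positive dimension. In every case $\{H=0\}$ does not have exactly one ordinary double point as its sole singularity, so $H\in\Delta'(\R)$.

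The second step is a standard dimension count. The space of affine $2$-planes $W\subset V$ containing the line joining the two distinct points $F,F^{\dagger}$ is an irreducible variety (a projective space of dimension $\dim V-2$), and since $F,F^{\dagger}\in \Pi$ are smooth, they do not lie in $\Delta'$. Because $\Delta'$ has codimension $\geq 2$ in $V$, a dimension count shows that the locus of such $W$ for which $W\cap \Delta'$ has positive dimension is a proper Zariski-closed subset. A general $W$ therefore meets $\Delta'$ in a finite (possibly empty) set of points. Combining this with the inclusion $\Pi\cap W(\R)\cap\Delta(\R)\subset W(\R)\cap \Delta'(\R)$ established in the first step yields the claim.

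The main subtlety is the first step, where one has to exploit the interplay between strict positivity (real structure) and singularities (complex structure); once this inclusion $\Pi\cap\Delta\subset\Delta'$ is in place, the finiteness is a routine transversality-of-a-generic-slice argument using the constraint $\codim_V\Delta'\geq 2$.
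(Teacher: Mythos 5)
Your proof is correct and follows essentially the same route as the paper: the key observation that a singular hypersurface with no real points must have at least two geometric singular points (a singular point together with its distinct conjugate), hence lies in $\Delta'$, and then the finiteness of $W\cap\Delta'$ for a general plane $W$ by the codimension $\geq 2$ estimate. No gaps.
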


\begin{proof}
Let $H\in \Pi\cap W(\R)\cap\Delta(\R)$. Since $H\in \Delta(\R)$, $\{H=0\}\subset\mathbb{P}^n_{\R}$ is a singular hypersurface. Since $H\in \Pi$, $\{H=0\}$ has no real point. Consequently, $\{H=0\}$ has (geometrically) at least two singular points: any singular point and its distinct complex conjugate. This shows that 
$\Pi\cap W(\R)\cap\Delta(\R)\subset W(\R)\cap\Delta'(\R)$. But if $W$ has been chosen to intersect properly $\Delta'$, the variety $W\cap \Delta'$ is already finite.
\end{proof}

\begin{lem}\label{connected} There exists a variety $S$ over $\R$, two points $s, s^{\dagger}\in S(\R)$, and a morphism $\rho : S\to W\setminus\Delta$ such that $S(\R)$ is semi-algebraically connected, $\rho(s)=F$, $\rho(s^{\dagger})=F^{\dagger}$ and $\rho(S(\R))\subset \Pi$.
\end{lem}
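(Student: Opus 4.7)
The plan is to take $\rho$ to parametrize a ``thin'' real ellipse in $W(\R)$ passing through $F$ and $F^{\dagger}$, contained in $\Pi$, and avoiding the finite set $\{P_1,\dots,P_k\}:=\Pi\cap W(\R)\cap\Delta(\R)$ produced by Lemma~\ref{finite}. For the source, I would use the standard affine real conic $S_0:=\Spec\R[x,y]/(x^2+y^2-1)$, whose real locus is the unit circle, a semi-algebraically connected and semi-algebraically closed and bounded subset of $W(\R)$ under an appropriate embedding.

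First I would fix affine coordinates on $W$, set $M:=\tfrac12(F+F^{\dagger})$, $u:=\tfrac12(F-F^{\dagger})$, and choose any $v\in W(\R)$ linearly independent from $u$. For $b\in\R$ I would define $\rho_b\colon S_0\to W$ by $\rho_b(x,y):=M+xu+byv$, so that $\rho_b(1,0)=F$, $\rho_b(-1,0)=F^{\dagger}$, and $\rho_b(S_0(\R))$ is an ellipse lying in a tube of width $O(|b|)$ around the segment $[F,F^{\dagger}]$. Since $\Pi$ is open semi-algebraic by Lemma~\ref{open} and $[F,F^{\dagger}]\subset\Pi\cap W(\R)$ by convexity of $\Pi$, for all sufficiently small $|b|$ one will have $\rho_b(S_0(\R))\subset\Pi$. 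Next, using that $F,F^{\dagger}\notin\Delta$ because $\{F=0\}$ and $\{F^{\dagger}=0\}$ are smooth, each $P_i$ differs from both $F$ and $F^{\dagger}$, and a direct computation in the basis $(u,v)$ shows that $P_i\in\rho_b(S_0(\R))$ holds for at most two values of $b$. Only finitely many $b$'s are thus forbidden. I would then select $b>0$ small enough to enforce the openness condition and avoiding these bad values, and write $\rho_0:=\rho_b$.

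With such a $b$, one gets $\rho_0(S_0(\R))\subset\Pi\setminus\{P_1,\dots,P_k\}$, which by Lemma~\ref{finite} is disjoint from $\Delta(\R)$. Since the ellipse $\rho_0(S_0)$ is not contained in $\Delta$, as it contains $F\notin\Delta$, the scheme-theoretic preimage $D:=\rho_0^{-1}(\Delta)\subset S_0$ is a finite closed subscheme; any real point of $D$ would provide a real point in $\rho_0(S_0(\R))\cap\Delta(\R)=\varnothing$, so $D$ must be supported on non-real closed points of $S_0$. Setting $S:=S_0\setminus D$ and $\rho:=\rho_0|_S\colon S\to W\setminus\Delta$ then yields a variety $S$ over $\R$ with $S(\R)=S_0(\R)$ semi-algebraically connected, with $\rho(S(\R))\subset\Pi$, and with distinguished real points $s:=(1,0)$, $s^{\dagger}:=(-1,0)\in S(\R)$ satisfying $\rho(s)=F$ and $\rho(s^{\dagger})=F^{\dagger}$, as required.

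The only delicate point will be verifying that the ``$b$ small'' (staying in $\Pi$) and ``$b$ generic'' (avoiding all $P_i$) constraints can be simultaneously satisfied; but the latter excludes only finitely many values of $b$ while the former describes an open interval about $0$, so a suitable $b$ always exists.
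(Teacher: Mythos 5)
Your proof is correct, and it rests on the same two pillars as the paper's (openness of $\Pi$ from Lemma \ref{open}, finiteness of $\Pi\cap W(\R)\cap\Delta(\R)$ from Lemma \ref{finite}, plus the positivity of the distance from $[F,F^{\dagger}]$ to $W(\R)\setminus\Pi$), but it handles the finitely many bad points differently. The paper takes the \emph{filled} thin ellipse inside $\Pi$ and passes to the double cover $W'=\{x^2+\tfrac1\varepsilon y^2+z^2=1\}\to W$, so that $S(\R)$ is a $2$-sphere minus finitely many points, which stays connected; you instead stay with a $1$-dimensional conic and use the extra parameter $b$ to move the ellipse off the finite bad set entirely, then delete the (non-real) scheme-theoretic intersection with $\Delta$ so that $\rho$ genuinely lands in $W\setminus\Delta$ without changing $S(\R)$. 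Your key counting step checks out: in the $(u,v)$-frame a point $P_i=(p,q)$ lies on $\rho_b(S_0(\R))$ only if $q\neq 0$ and $b^2=q^2/(1-p^2)$, or else $P_i\in\{F,F^{\dagger}\}\not\subset\Delta$, so each $P_i$ excludes at most two values of $b$; and the intersection $D=\rho_0^{-1}(\Delta)$ is indeed finite because the irreducible conic $\rho_0(S_0)$ contains $F\notin\Delta$. What your version buys is a lower-dimensional, more explicit parameter space (a circle rather than a sphere) and no appeal to connectedness of a punctured surface; what the paper's version buys is that no genericity choice of the ellipse is needed beyond thinness, since removing finitely many points from $\mathbb{S}^2$ costs nothing. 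Both ultimately invoke the same semi-algebraic connectedness criterion from \cite{BCR}.
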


\begin{proof}
Choose a coordinate system on $W$ for which $F$ has coordinate $(-1,0)$ and $F^{\dagger}$ has coordinate $(1,0)$. By Lemma \ref{open}, the segment $[F,F^{\dagger}]$ is included in $\Pi$ and $W(\R)\setminus\Pi$ is closed and semi-algebraic.
Consequently, combining \cite[Proposition 2.2.8 (ii)]{BCR} and \cite[Theorem 2.5.8]{BCR}, we see that the distance between $[F,F^{\dagger}]$ and $W(\R)\setminus \Pi$ is positive. It follows that if $\varepsilon\in\R$ is small enough, the ellipse $\{x^2+\frac{1}{\varepsilon}y^2\leq 1\}\subset W(\R)$, that contains $F$ and $F^{\dagger}$, is included in $\Pi$.

Now, consider the double cover $\rho:W':=\{x^2+\frac{1}{\varepsilon}y^2+z^2=1\}\to W$ and define $S:=\rho^{-1}(W\setminus\Delta)\subset W'$. That $\rho(S(\R))$ is included in $\Pi$ and contains $F$ and $F^{\dagger}$ follows from our choice of the ellipse. The semi-algebraic set $W'(\R)$ is a sphere $\mathbb{S}^2$, and $S(\R)$ is the complement of a finite number of points in it by Lemma \ref{finite}. This allows to show by hand that it is semi-algebraically path-connected, hence semi-algebraically connected by \cite[Proposition 2.5.13]{BCR}. 
\end{proof}

Over the base $S$, there is a smooth projective family $\mathcal{Y}\stackrel{p}{\longrightarrow}S$ obtained by pulling back by 
$\rho$ the universal family of smooth double covers over $W\setminus \Delta$. In particular, $\mathcal{Y}_s\simeq Y$ and $\mathcal{Y}_{s^{\dagger}}\simeq Y^{\dagger}$. 
Since $\rho(S(\R))\subset \Pi$, we see that $\mathcal{Y}(\R)=\varnothing$.

\subsection{Cohomology over $\R$ when $d\equiv 0[4]$}
We start with a general lemma:

\begin{lem}\label{cohotop} Let $X$ be a smooth projective geometrically integral variety of dimension $n$ over $\R$ such that $X(\R)=\varnothing$.
Then:
\begin{enumerate}[(i)]
\item $H^{2n}(X,\mathbb{Z}_2(n))\simeq\mathbb{Z}_2$.
\item $H^{2n}(X,\mathbb{Z}_2(n+1))\simeq\mathbb{Z}/2\mathbb{Z}$.
\end{enumerate}
\end{lem}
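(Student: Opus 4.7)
The plan is to handle (ii) first and then deduce (i) from it. The main inputs are the long exact sequence (\ref{rc}), the cohomological dimension bound from Proposition \ref{cd}(ii), and the behavior of cycle classes of closed points on $X$.

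First, since $X(\R)=\varnothing$, Proposition \ref{cd}(ii) forces $H^k(X,\mathbb{Z}_2(j))=0$ for all $k>2n$ and all $j$ (obtained as inverse limits from the mod~$2^r$ vanishings). Feeding this into (\ref{rc}) with $k=2n$ for both $j=n$ and $j=n+1$ shows that both $H^{2n}(X,\mathbb{Z}_2(n))$ and $H^{2n}(X,\mathbb{Z}_2(n+1))$ are cyclic quotients of $H^{2n}(X_{\C},\mathbb{Z}_2)\simeq \mathbb{Z}_2$, the last identification coming from Poincar\'e duality on the geometrically connected $X_{\C}$. Moreover, every closed point of $X$ has residue field $\C$ (because $X(\R)=\varnothing$); picking such a point $P$, its cycle class $[P]\in H^{2n}(X,\mathbb{Z}_2(n))$ pulls back to $[P\times_X X_{\C}]$, which is the sum of two classes of $\C$-points, each generating $H^{2n}(X_{\C},\mathbb{Z}_2)\simeq\mathbb{Z}_2$. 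Hence $\pi^*[P]=2$, and the image of $\pi^*:H^{2n}(X,\mathbb{Z}_2(n))\to \mathbb{Z}_2$ contains $2\mathbb{Z}_2$.

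The crux is then to show this image is a proper submodule of $\mathbb{Z}_2$, equivalently that $H^{2n}(X,\mathbb{Z}_2(n+1))\neq 0$. For this I would use the Bockstein sequence of $0\to \mathbb{Z}_2(n+1)\xrightarrow{2}\mathbb{Z}_2(n+1)\to \mathbb{Z}/2\mathbb{Z}\to 0$ (the quotient is $\mu_2^{\otimes(n+1)}=\mathbb{Z}/2\mathbb{Z}$ with trivial Galois action) combined with $H^{2n+1}(X,\mathbb{Z}_2(n+1))=0$, to obtain $H^{2n}(X,\mathbb{Z}_2(n+1))/2\simeq H^{2n}(X,\mathbb{Z}/2\mathbb{Z})$. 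The latter is $\mathbb{Z}/2\mathbb{Z}$: over $\R=\mathbb{R}$ it is the top $\mathbb{Z}/2\mathbb{Z}$-cohomology of the closed connected real $2n$-manifold $X(\mathbb{C})/G$ (by Artin's comparison, using that $G$ acts freely because $X(\R)=\varnothing$), and for a general real closed field one reduces to $\R=\mathbb{R}$ using the invariance of \'etale cohomology under extensions of real closed fields (via Hochschild--Serre and the analogous alg-closed invariance) together with the preservation of $X(\R)=\varnothing$ by such extensions (by model-completeness of the theory of real closed fields). This nonvanishing forces the image of $\pi^*$ to equal $2\mathbb{Z}_2$ exactly, yielding $H^{2n}(X,\mathbb{Z}_2(n+1))\simeq \mathbb{Z}/2\mathbb{Z}$ and proving (ii).

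Statement (i) follows at once: in (\ref{rc}) with $j=n+1$, the map $\pi^*:H^{2n}(X,\mathbb{Z}_2(n+1))\to \mathbb{Z}_2$ has $2$-torsion source (by (ii)) and torsion-free target, so it vanishes; the resulting surjection $\mathbb{Z}_2\twoheadrightarrow H^{2n}(X,\mathbb{Z}_2(n))$ must then be an isomorphism. The step I expect to be most delicate is justifying $H^{2n}(X,\mathbb{Z}/2\mathbb{Z})\simeq\mathbb{Z}/2\mathbb{Z}$ over an arbitrary real closed base field, while every other step is formal once (\ref{rc}) and the cycle-class description of $\pi^*[P]$ are in hand.
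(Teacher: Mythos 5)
Your skeleton coincides with the paper's up to the crux: both use (\ref{rc}) together with the vanishing of $H^{>2n}$ from Proposition \ref{cd} (ii) to present the two groups as quotients of $H^{2n}(X_{\C},\mathbb{Z}_2)\simeq\mathbb{Z}_2$ via $\pi_*$, both use that closed points of $X$ have residue field $\C$ to see that $\Ima(\pi^*)\supseteq 2\mathbb{Z}_2$, and both deduce (i) from the torsionness of $H^{2n}(X,\mathbb{Z}_2(n+1))$ by the same torsion-source-versus-torsion-free-target argument. You diverge only in proving that $H^{2n}(X,\mathbb{Z}_2(n+1))\neq 0$. The paper does this with no external input and in the opposite logical order: once (i) gives that $\pi_*\colon H^{2n}(X_{\C},\mathbb{Z}_2)\to H^{2n}(X,\mathbb{Z}_2(n))$ is an isomorphism, the identity $\pi_*\pi^*=2$ forces $\Ima(\pi^*)$ to be exactly $2\mathbb{Z}_2$, so the cokernel computing $H^{2n}(X,\mathbb{Z}_2(n+1))$ is exactly $\mathbb{Z}/2\mathbb{Z}$. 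This makes your Bockstein step and the computation of $H^{2n}(X,\mathbb{Z}/2\mathbb{Z})$ unnecessary.

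Your route can be completed, but the step you rightly flag as delicate has a genuine problem as stated: one cannot in general ``reduce to $\R=\mathbb{R}$,'' because an arbitrary real closed field neither contains nor embeds into $\mathbb{R}$ (a finitely generated field of definition of $X$ may carry a non-archimedean ordering, which admits no order-preserving embedding into $\mathbb{R}$), and model completeness yields elementary equivalence of the fields, not a comparison of cohomology groups of a fixed $X$ after descent and re-embedding; invariance under \emph{extensions} of real closed fields does not bridge two real closed fields with no common real closed subfield of definition. The correct substitute is to work directly over the given $\R$ with the semialgebraic/$b$-topological machinery of Delfs--Knebusch and Scheiderer \cite{Scheiderer}, identifying $H^{2n}(X,\mathbb{Z}/2\mathbb{Z})$ with the top cohomology of the connected closed semialgebraic $2n$-manifold $X(\C)/G$ (the paper itself invokes these tools in the proof of Lemma \ref{vanisheven1}, precisely because the naive topological argument is unavailable over general $\R$). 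So either import that machinery to justify $H^{2n}(X,\mathbb{Z}/2\mathbb{Z})\simeq\mathbb{Z}/2\mathbb{Z}$, or, more economically, adopt the projection-formula argument and drop that computation altogether.
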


\begin{proof}
We use the exact sequence (\ref{rc}), as well as Proposition \ref{cd} (ii).

Consider $H^{2n}(X,\mathbb{Z}_2(n))\to H^{2n}(X_{\C},\mathbb{Z}_2)\stackrel{\pi_*}{\longrightarrow}H^{2n}(X,\mathbb{Z}_2(n+1))\to 0$. The cohomology class of a closed point in $H^{2n}(X,\mathbb{Z}_2(n))$ pulls back to twice the cohomology class of a closed point in $H^{2n}(X_{\C},\mathbb{Z}_2)$.  This shows that $H^{2n}(X,\mathbb{Z}_2(n+1))$ is torsion.
From $H^{2n}(X,\mathbb{Z}_2(n+1))\to H^{2n}(X_{\C},\mathbb{Z}_2)\to H^{2n}(X,\mathbb{Z}_2(n))\to 0$, we deduce that $\mathbb{Z}_2\simeq H^{2n}(X_{\C},\mathbb{Z}_2)\to H^{2n}(X,\mathbb{Z}_2(n))$ is an isomorphism. The composition $H^{2n}(X,\mathbb{Z}_2(n))\stackrel{\pi^*}{\longrightarrow} H^{2n}(X_{\C},\mathbb{Z}_2)\stackrel{\pi_*}{\longrightarrow}  H^{2n}(X,\mathbb{Z}_2(n))$ being multiplication by $2$, we see that the image of $H^{2n}(X,\mathbb{Z}_2(n))\stackrel{\pi^*}{\longrightarrow} H^{2n}(X_{\C},\mathbb{Z}_2)$ has index $2$, so that $H^{2n}(X,\mathbb{Z}_2(n+1))=\mathbb{Z}/2\mathbb{Z}$.
\end{proof}


We need information about $\omega^{2n}\in H^{2n}(Y,\mathbb{Z}_2(2n))$, that will be provided by Lemma \ref{vanisheven1} below. 
As a first step towards this result, we deal with the Fermat double cover $Y^{\dagger}:=\{Z^2+F^{\dagger}=0\}$, where $F^{\dagger}:=X_0^d+\dots+X_n^d$.

\begin{lem}\label{vanishFermat}
Suppose that $n$ is odd and $d\equiv 0[4]$. 
Then $\omega^{2n}\in H^{2n}(Y^{\dagger},\mathbb{Z}_2(2n))$ is zero.
\end{lem}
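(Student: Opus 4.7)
My plan is to exploit the sum-of-squares structure that $d\equiv 0\pmod 4$ forces on $F^{\dagger}$. Setting $e:=d/2$ (which is even), we have
$$F^{\dagger}=X_0^d+\cdots+X_n^d=(X_0^e)^2+\cdots+(X_n^e)^2,$$
displaying $F^{\dagger}$ as a sum of $n+1$ squares of polynomials. Setting $V_i:=X_i^e$ yields a finite morphism $\phi:Y^{\dagger}\to Q$ onto the smooth anisotropic quadric $Q:=\{W^2+V_0^2+\cdots+V_n^2=0\}\subset\mathbb{P}^{n+1}_{\R}$ via $(X_0,\dots,X_n,Z)\mapsto (X_0^e,\dots,X_n^e,Z)$. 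Since $\omega^{2n}$ comes from $\Spec(\R)$, naturality gives $\phi^{*}(\omega^{2n}_Q)=\omega^{2n}_{Y^{\dagger}}$, so it suffices to show $\omega^{2n}_Q=0$ in $H^{2n}(Q,\mathbb{Z}_2(2n))$.

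On $Q$, the relation $W^2=-\sum_{i=0}^n V_i^2$ provides an explicit writing of $-1$ as a sum of $n+1$ squares in $\R(Q)$, so the level of $\R(Q)$ is at most $n+1$. For odd $n\geq 3$, we have $n+1<2^n$, so Proposition~\ref{levelnr} gives that $\omega^n$ has coniveau $\geq 1$ on $Q$. Moreover, $Q_{\C}$ is a smooth rational variety, so $\CH_0(Q_{\C})=\mathbb{Z}$ and the converse in Proposition~\ref{nn+1} applies to $Q$.

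The crux is then to upgrade coniveau $\geq 1$ of $\omega^n$ to vanishing of $\omega^{2n}$. I plan to iterate the Bloch-Ogus mechanism of Proposition~\ref{nn+1}: assuming $\omega^{n+k}$ has coniveau $\geq k+1$ on $Q$, an analogous construction using Proposition~\ref{bkrc} with higher twists together with the vanishing of higher unramified cohomology of $Q_{\C}$ (a consequence of $\CH_0(Q_{\C})=\mathbb{Z}$ via Bloch-Srinivas, exactly as invoked in the proof of Proposition~\ref{nn+1}) should yield that $\omega^{n+k+1}$ has coniveau $\geq k+2$ on $Q$. Iterating $n$ times gives that $\omega^{2n}$ has coniveau $\geq n+1$ on $Q$, which forces $\omega^{2n}_Q=0$ since $Q$ has dimension $n$.

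The case $n=1$ falls outside this range, since then $n+1=2^n=2$ and the level argument yields no coniveau for $\omega$; it must be handled separately. There $Y^{\dagger}$ is an elliptic curve with $\R(Y^{\dagger})$ of level exactly $2$, so the quaternion algebra $(-1,-1)$ splits over $\R(Y^{\dagger})$; Auslander-Goldman injectivity $\Br(Y^{\dagger})\hookrightarrow\Br(\R(Y^{\dagger}))$ for smooth curves then gives $\omega^2=0$ in $H^2(Y^{\dagger},\mathbb{Z}/2\mathbb{Z})$, which the Bockstein and $H^3(Y^{\dagger},\mathbb{Z}_2(2))=0$ (from Proposition~\ref{cd}(ii)) lift to $\omega^2=0$ in $H^2(Y^{\dagger},\mathbb{Z}_2(2))$. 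The main obstacle will be making the iteration above rigorous: Proposition~\ref{nn+1} is formulated for a single step only, and an inductive extension analyzing the Cousin resolutions of $\mathcal{H}^{q}_{Q}(\cdot)$ for $q$ beyond the range of Proposition~\ref{nn+1}, combined with the triviality of $\CH_0(Q_{\C})$, will be needed.
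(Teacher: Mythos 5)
Your first step --- the morphism $\mu:Y^{\dagger}\to Q^n$ defined by $T_i=X_i^{d/2}$ and the identity $\omega^{2n}_{Y^{\dagger}}=\mu^*(\omega^{2n}_{Q^n})$ --- is exactly the paper's starting point. But the reduction you then make, ``it suffices to show $\omega^{2n}_{Q^n}=0$,'' is fatal: that statement is false. For $n=3$ one can see this from the paper's own results: Lemma~\ref{generateurs} gives $\omega^{4}\neq 0$ in $H^{4}(Q^3,\mathbb{Z}_2(4))$, and running the argument of Proposition~\ref{vanisheven2} on $Q^3$ (a smooth double cover with $d=2$) shows that the integer $k$ with $\omega^k\neq0$, $\omega^{k+1}=0$ must be even with $k/2$ odd, hence $k=6$; so $\omega^{6}\neq 0$ in $H^{6}(Q^3,\mathbb{Z}_2(6))$. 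Consequently your proposed iteration of Proposition~\ref{nn+1} --- which you already flag as the unproven step --- cannot be repaired: coniveau does not increase by one each time you cup with $\omega$, and the single-step mechanism of Proposition~\ref{nn+1} relies on the torsion-freeness of $\mathcal{H}^{k+1}_X(k)$ and the vanishing of $H^n_{\nr}(X_{\C})$, inputs that have no analogue for the higher steps $H^p(Q^n,\mathcal{H}^q)$, $p\geq 2$, that your induction would need. A structural warning sign is that your argument uses only that $d$ is even (to define $\mu$), never that $d\equiv 0[4]$; since $Q^n$ itself is the case $d=2$ and carries a nonzero $\omega^{2n}$, any proof ignoring the stronger congruence must fail.

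What the hypothesis $d\equiv 0[4]$ actually buys is that $d/2$ is even, hence $\deg\mu=(d/2)^n$ is even. The paper exploits this, not any vanishing on $Q^n$: by Lemma~\ref{cohotop} the maps $\pi_*:H^{2n}(\cdot_{\C},\mathbb{Z}_2)\to H^{2n}(\cdot,\mathbb{Z}_2(2n))\simeq\mathbb{Z}/2\mathbb{Z}$ are surjective for both $Q^n$ and $Y^{\dagger}$, while $\mu_{\C}^*$ on $H^{2n}(Q^n_{\C},\mathbb{Z}_2)\simeq\mathbb{Z}_2$ is multiplication by the even number $\deg\mu$; the resulting commutative square forces $\mu^*:H^{2n}(Q^n,\mathbb{Z}_2(2n))\to H^{2n}(Y^{\dagger},\mathbb{Z}_2(2n))$ to be the zero map, so $\omega^{2n}_{Y^{\dagger}}=\mu^*(\omega^{2n}_{Q^n})=0$ even though $\omega^{2n}_{Q^n}\neq 0$. (Your separate treatment of $n=1$ also has a gap --- triviality in $\Br(Y^{\dagger})$ does not give triviality in $H^2(Y^{\dagger},\mu_2)$ because of the $\Pic(Y^{\dagger})/2$ contribution --- but this is moot, as the degree argument above handles $n=1$ uniformly.)
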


\begin{proof}
The morphism $\mu:Y^{\dagger}\to Q^n$ to $Q^n:=\{Z^2+T_0^2+\dots+T_n^2=0\}\subset\mathbb{P}^{n+1}_{\R}$ defined by $T_i=X_i^{d/2}$ has even degree because $d\equiv 0[4]$. By Lemma \ref{cohotop} applied to $Y^{\dagger}$ and $Q^n$, there is a commutative diagram with surjective vertical arrows:
\begin{equation*}
\xymatrix{
\mathbb{Z}_2=H^{2n}(Q^n_{\C},\mathbb{Z}_2)\ar[d]\ar[r]^-{\mu_{\C}^*}
&  H^{2n}(Y^{\dagger}_{\C},\mathbb{Z}_2)=\mathbb{Z}_2\ar[d]  \\
\mathbb{Z}/2\mathbb{Z}=H^{2n}(Q^n,\mathbb{Z}_2(2n))\ar[r]^-{\mu^*}
& H^{2n}(Y^{\dagger},\mathbb{Z}_2(2n))=\mathbb{Z}/2\mathbb{Z}.
}
\end{equation*}
Since $\mu_{\C}^*$ is the multiplication by the even number $\deg(\mu)$, $\mu^*$ vanishes. Hence so does the composite
$H^{2n}(\R,\mathbb{Z}_2(2n))\to H^{2n}(Q^n,\mathbb{Z}_2(2n))\stackrel{\mu^*}{\longrightarrow}  H^{2n}(Y^{\dagger},\mathbb{Z}_2(2n))$.
\end{proof}

We deduce the same result for $Y$ using a deformation argument: 

\begin{lem}\label{vanisheven1}
Suppose that $n$ is odd and $d\equiv 0[4]$.
Then $\omega^{2n}\in H^{2n}(Y,\mathbb{Z}_2(2n))$ is zero.
\end{lem}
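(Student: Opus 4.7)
My plan is to propagate the vanishing of $\omega^{2n}$ from $Y^{\dagger} = \mathcal{Y}_{s^{\dagger}}$ to $Y = \mathcal{Y}_{s}$ along the semi-algebraically connected real locus $S(\R)$, using the smooth projective family $p : \mathcal{Y} \to S$ constructed in Lemma \ref{connected}. For any $t \in S(\R)$, the fiber $\mathcal{Y}_t$ is smooth projective of dimension $n$ and, since $\mathcal{Y}(\R) = \varnothing$, satisfies $\mathcal{Y}_t(\R) = \varnothing$. Lemma \ref{cohotop}(ii) then applies; because $n$ is odd the parities of $2n$ and $n+1$ agree, so $H^{2n}(\mathcal{Y}_t, \mathbb{Z}_2(2n)) \simeq \mathbb{Z}/2\mathbb{Z}$. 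The function
\[
\Phi : S(\R) \to \mathbb{Z}/2\mathbb{Z}, \qquad t \mapsto i_t^* \omega^{2n},
\]
is thus well-defined, and it satisfies $\Phi(s^{\dagger}) = 0$ by Lemma \ref{vanishFermat}; the desired conclusion $\Phi(s) = 0$ reduces to showing that $\Phi$ is constant on $S(\R)$.

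The natural tool is smooth proper base change. The sheaf $\mathcal{F} := R^{2n} p_* \mathbb{Z}_2(2n)$ is a lisse $2$-adic sheaf on $S_{\text{\'et}}$, its stalk at each real point $t$ is canonically identified with $H^{2n}(\mathcal{Y}_t, \mathbb{Z}_2(2n))$, and $\omega^{2n}$ produces, via the Leray edge map for $p$, a global section $\sigma \in H^0(S, \mathcal{F})$ with $i_t^* \sigma = \Phi(t)$. Combined with the semi-algebraic connectedness of $S(\R)$ (Lemma \ref{connected}) and the local triviality of the smooth proper family in the semi-algebraic sense, this should force $\sigma$ to take the same value at every real point of $S$, giving $\Phi(s) = \Phi(s^{\dagger}) = 0$ as required.

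The main obstacle is the transfer of local constancy from the \'etale site to the semi-algebraic topology: $S(\R)$ need not be algebraically connected, even though $S$ is geometrically connected, so that the lisse nature of $\mathcal{F}$ over $S_{\text{\'et}}$ does not by itself control the values of $\Phi$ at different real points. I would address this either by working in Scheiderer's real \'etale topos of $S$, in which semi-algebraic connected components of $S(\R)$ are the correct notion of connectedness for locally constant sheaves, or by invoking the Tarski--Seidenberg transfer principle to reduce to $\R = \mathbb{R}$ and then using the $G$-equivariant local triviality of the smooth proper map $\mathcal{Y}(\mathbb{C}) \to S(\mathbb{C})$ along a semi-algebraic path from $s^{\dagger}$ to $s$ inside $S(\mathbb{R})$, which transports both the $G$-equivariant cohomology of the fibers and the universal class $\omega^{2n}$.
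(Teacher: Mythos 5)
Your proposal follows essentially the same route as the paper: deform from $Y^{\dagger}$ to $Y$ along the family $\mathcal{Y}\to S$ of Lemma \ref{connected}, identify $H^{2n}(\mathcal{Y}_t,\mathbb{Z}_2(2n))\simeq\mathbb{Z}/2\mathbb{Z}$ via Lemma \ref{cohotop}, and conclude by local constancy of the relevant direct image sheaf over the semi-algebraically connected set $S(\R)$, using Scheiderer's real \'etale machinery. The one technical difference is that you run the monodromy argument directly with $\mathbb{Z}_2(2n)$ coefficients, whereas Scheiderer's proper base change and local-constancy results (\cite[Theorem 16.2, Corollary 17.20]{Scheiderer}) are set up for finite coefficient sheaves; the paper sidesteps this by first reducing modulo $2$ (the vanishing of $\omega^{2n}$ in $H^{2n}(Y^{\dagger},\mathbb{Z}/2\mathbb{Z})$ follows from Lemma \ref{vanishFermat}), running the deformation with $\mathbb{Z}/2\mathbb{Z}$ coefficients, and then lifting back using that multiplication by $2$ is zero on $H^{2n}(Y,\mathbb{Z}_2(2n))\simeq\mathbb{Z}/2\mathbb{Z}$, so that reduction mod $2$ is injective there. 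You should adopt that reduction rather than manipulate the $2$-adic inverse system directly. Also, your fallback via Tarski--Seidenberg to reduce to $\R=\mathbb{R}$ is not viable as stated: the vanishing of a continuous \'etale cohomology class is not a first-order statement in the language of ordered fields, and the eventual application of this lemma (the specialization argument in Section \ref{s6}) genuinely requires non-archimedean real closed fields, so the Scheiderer route is the one to take.
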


\begin{proof}
Lemma \ref{vanishFermat} and the diagram:
\begin{equation*}
\xymatrix{
H^{2n}(\R,\mathbb{Z}_2(2n)) \ar^{\cong}[r]\ar[d]
& H^{2n}(\R,\mathbb{Z}/2\mathbb{Z})\ar[d]  \\
H^{2n}(Y^{\dagger},\mathbb{Z}_2(2n)) \ar[r]
&  H^{2n}(Y^{\dagger},\mathbb{Z}/2\mathbb{Z})
}
\end{equation*}
shows that $H^{2n}(\R,\mathbb{Z}/2\mathbb{Z})\to H^{2n}(Y^{\dagger},\mathbb{Z}/2\mathbb{Z})$ vanishes.

Now consider the family $\mathcal{Y}\stackrel{p}{\longrightarrow}S$ constructed at the end of paragraph \ref{parsemialg}. The varieties $Y$ and $Y^{\dagger}$ are members of this family and $S(\R)$ is semi-algebraically connected.
If we were working over the field $\mathbb{R}$ of real numbers, we would use topological arguments (namely a $G$-equivariant version of Ehresmann's theorem applied to the fibration $p_{\mathbb{C}}^{-1}(S(\mathbb{R}))\to S(\mathbb{R})$) to show that $H^{2n}(\mathbb{R},\mathbb{Z}/2\mathbb{Z})\to H^{2n}(Y,\mathbb{Z}/2\mathbb{Z})$ vanishes as well. Over an arbitrary real closed field $\R$, the corresponding tools have been developped by Scheiderer \cite{Scheiderer} and the topological arguments may be replaced by \cite[Corollary 17.21]{Scheiderer}. 

Let us explain more precisely how to apply this result. In doing so, we use freely the notations of \cite{Scheiderer}. Consider the composition:
\begin{alignat*}{2}
H^{2n}(\R, &\mathbb{Z}/2\mathbb{Z})\to H^{2n}(\mathcal{Y}, \mathbb{Z}/2\mathbb{Z})\xleftarrow{\sim}H^{2n}(\mathcal{Y}_b,\mathbb{Z}/2\mathbb{Z})\\
&\to H^0(S_b,R^{2n}p_{b*}\mathbb{Z}/2\mathbb{Z})\to H^0(S_r,i^*R^{2n}p_{b*}\mathbb{Z}/2\mathbb{Z}), 
\end{alignat*}
where the isomorphism $H^{2n}(\mathcal{Y}_b,\mathbb{Z}/2\mathbb{Z})\xrightarrow{\sim}H^{2n}(\mathcal{Y}, \mathbb{Z}/2\mathbb{Z})$ follows from \cite[Example 2.14]{Scheiderer}, taking into account Proposition \ref{realff} and the fact that $\mathcal{Y}(\R)=\varnothing$. By proper base change \cite[Theorem 16.2 b)]{Scheiderer} and comparing \'etale and $b$-cohomology using \cite[Example 2.14]{Scheiderer} once again, we see that the stalk of $i^*R^{2n}p_{b*}\mathbb{Z}/2\mathbb{Z}$ at $s$ (resp. $s^{\dagger}$) is $H^{2n}(Y,\mathbb{Z}/2\mathbb{Z})$ (resp. $H^{2n}(Y^{\dagger},\mathbb{Z}/2\mathbb{Z})$). We have proven above that $H^{2n}(\R,\mathbb{Z}/2\mathbb{Z})$ vanishes in the stalk at $s^{\dagger}$. But we know that the sheaf  $i^*R^{2n}p_{b*}\mathbb{Z}/2\mathbb{Z}$ is locally constant on $S_r$ by \cite[Corollary 17.20 b)]{Scheiderer}, and that $S_r$ is connected by \cite[Proposition 7.5.1 (i)]{BCR} and because $S(\R)$ is semi-algebraically connected. Consequently, $H^{2n}(\R,\mathbb{Z}/2\mathbb{Z})$ also vanishes in the stalk at $s$, so that $H^{2n}(\R,\mathbb{Z}/2\mathbb{Z})\to H^{2n}(Y,\mathbb{Z}/2\mathbb{Z})$ is zero.

To conclude that $\omega^{2n}\in  H^{2n}(Y,\mathbb{Z}_2(2n))$ vanishes, consider the exact diagram:
\begin{equation*}
\xymatrix{
&H^{2n}(\R,\mathbb{Z}_2(2n)) \ar^{\cong}[r]\ar[d]
& H^{2n}(\R,\mathbb{Z}/2\mathbb{Z})\ar[d]  \\
H^{2n}(Y,\mathbb{Z}_2(2n)) \ar[r]^{2}&H^{2n}(Y,\mathbb{Z}_2(2n)) \ar[r]
&  H^{2n}(Y,\mathbb{Z}/2\mathbb{Z}),
}
\end{equation*}
and notice that the multiplication by $2$ map is zero by Lemma \ref{cohotop}.
\end{proof}

\begin{prop}\label{vanisheven2}
Suppose that $d\equiv 0[4]$.  Then $\omega^{n+1}\in H^{n+1}(Y,\mathbb{Z}_2(n+1))$ is zero.
\end{prop}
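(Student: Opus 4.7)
My plan is to adapt the strategy of Lemma \ref{vanisheven1} to the class $\omega^{n+1}$ in place of $\omega^{2n}$. The argument will proceed in two steps: a deformation to the Fermat case, followed by an explicit cohomological computation on $Y^{\dagger}$ via the morphism $\mu\colon Y^{\dagger}\to Q^n$.

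For the deformation step, I use the family $\mathcal{Y}\to S$ of paragraph \ref{parsemialg}, whose real spectrum $S_r$ is connected and whose fibers include both $Y$ and $Y^{\dagger}$. Applying \cite[Corollary 17.20 (b)]{Scheiderer} to $i^{*}R^{n+1}p_{b*}(\mathbb{Z}/2\mathbb{Z})$, as in the final part of the proof of Lemma \ref{vanisheven1}, transfers the mod-$2$ vanishing of $\omega^{n+1}$ from $Y^{\dagger}$ to $Y$. The passage from mod-$2$ coefficients to $\mathbb{Z}_2(n+1)$-coefficients then uses the $2$-torsion nature of $\omega^{n+1}$ (since $2\omega=0$) combined with the exact diagram argument at the end of that lemma, provided that $H^{n+1}(Y,\mathbb{Z}_2(n+1))$ has the right torsion structure (deducible from Proposition \ref{BK} and related results).

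For $Y^{\dagger}$, I consider the commutative diagram
\begin{equation*}
\xymatrix{
H^{n+1}(Q^n_{\C},\mathbb{Z}_2)\ar[d]\ar[r]^-{\mu_{\C}^{*}}
&  H^{n+1}(Y^{\dagger}_{\C},\mathbb{Z}_2)\ar[d]  \\
H^{n+1}(Q^n,\mathbb{Z}_2(n+1))\ar[r]^-{\mu^{*}}
& H^{n+1}(Y^{\dagger},\mathbb{Z}_2(n+1)),
}
\end{equation*}
whose vertical maps are the $\pi_{*}$-arrows from the exact sequence (\ref{rc}). The formula $\mu^{*}H_{Q^n}=(d/2)\,H_{Y^{\dagger}}$ controls $\mu_{\C}^{*}$: when $n$ is odd, Proposition \ref{cohogeo}(iv) identifies $H^{n+1}(Y^{\dagger}_{\C},\mathbb{Z}_2)=\mathbb{Z}_2\cdot\alpha_{(n+1)/2}$ with $2\alpha_{(n+1)/2}=H_{\C}^{(n+1)/2}$, giving $\mu_{\C}^{*}(H_{\C}^{(n+1)/2})=2\,(d/2)^{(n+1)/2}\alpha_{(n+1)/2}$, which is divisible by $2$; when $n$ is even, $H^{n+1}(Y^{\dagger}_{\C},\mathbb{Z}_2)=0$ by Proposition \ref{cohogeo}(ii) and $\mu_{\C}^{*}=0$ trivially.

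The main obstacle is then to convert this mod-$2$ vanishing of $\mu_{\C}^{*}$ into vanishing of $\mu^{*}(\omega^{n+1}_{Q^n})=\omega^{n+1}_{Y^{\dagger}}$ in $H^{n+1}(Y^{\dagger},\mathbb{Z}_2(n+1))$. Concretely, one must verify that $\omega^{n+1}_{Q^n}$ lies in the image of the left vertical $\pi_{*}$, equivalently (by the exact sequence (\ref{rc})) that $\omega^{n+2}_{Q^n}$ vanishes in $H^{n+2}(Q^n,\mathbb{Z}_2(n))$ -- a statement about the anisotropic quadric that must be established directly using the small level of $\R(Q^n)$ (at most $n+1$, so well below $2^{n+2}$) and the specific cohomological structure of quadrics. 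It is in justifying this compatibility, together with the diagram chase at the level of $\mathbb{Z}_2(n+1)$-coefficients, that the hypothesis $d\equiv 0\pmod{4}$ is genuinely used (via the evenness of $d/2$ feeding extra factors of $2$ into the pullback formulas).
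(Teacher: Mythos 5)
Your strategy diverges from the paper's, and it breaks down at its central step. You reduce everything to showing that $\omega^{n+1}_{Q^n}$ lies in the image of $\pi_*\colon H^{n+1}(Q^n_{\C},\mathbb{Z}_2)\to H^{n+1}(Q^n,\mathbb{Z}_2(n+1))$, equivalently that $\omega^{n+2}$ vanishes in $H^{n+2}(Q^n,\mathbb{Z}_2(n))$, and you propose to get this from the small level of $\R(Q^n)$. But the level only controls powers of $\omega$ in the cohomology of the \emph{function field}, i.e.\ on a dense open subset; it says nothing about vanishing on the projective quadric. Indeed Lemma \ref{generateurs} shows that $\omega^{n+1}\neq 0$ in $H^{n+1}(Q^n,\mathbb{Z}_2(n+1))$ for every odd $n$, despite the level of $\R(Q^n)$ being tiny. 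Worse, for $n\equiv 3\ [4]$ your claim is provably false: if $\omega^{n+1}=c\,\pi_*\gamma$ then $0=2\omega^{n+1}=c\cdot 2H^{\frac{n+1}{2}}$, and $H^{\frac{n+1}{2}}$ has infinite order in $H^{n+1}(Q^n,\mathbb{Z}_2(n+1))$ (its image under $\pi^*$ is $2\gamma$), forcing $c=0$ and contradicting Lemma \ref{generateurs}. (Concretely, for $n=3$ one has $\omega^4=\pi_*\gamma-H^2$ and $\omega^5=\omega H^2\neq 0$ by the injectivity of the Gysin maps in (\ref{Gysins}).) For $n\equiv 1\ [4]$ the claim amounts to the equality $\delta=\omega^{n+1}$ exactly, which neither you nor the paper establishes (the paper only needs, and only proves, that $\omega^{n+1}$ is a combination of $\delta$ and classes divisible by $H^2$); and for $n$ even it amounts to $\omega^{n+1}_{Q^n}=0$ outright, again unproven, and suspicious since the hypothesis $d\equiv 0\ [4]$ would then play no role. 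There are secondary gaps as well: the passage from mod~$2$ vanishing to vanishing in $H^{n+1}(Y,\mathbb{Z}_2(n+1))$ is not available in degree $n+1$ (in degree $2n$ the paper uses $H^{2n}(Y,\mathbb{Z}_2(n+1))\simeq\mathbb{Z}/2\mathbb{Z}$ from Lemma \ref{cohotop}; Proposition \ref{BK} concerns a differently twisted sheaf and only controls unramified classes), and for $n\equiv 3\ [4]$ the map $\pi_*$ on $H^{n+1}(Y^{\dagger}_{\C},\mathbb{Z}_2)$ is injective, so divisibility of $\mu_{\C}^*\gamma$ by $2$ does not make its image under $\pi_*$ vanish.

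The paper's actual proof avoids all of this: it takes $k\geq n+1$ maximal with $\omega^k\neq 0$, shows $k=2l$ with $l$ odd and $\omega^k=\pi_*\alpha_l$ using Proposition \ref{cohogeo}, and then derives a contradiction by restricting to a linear section $Z\subset Y$ (itself a smooth double cover of $\mathbb{P}^l_{\R}$ of degree $d\equiv 0\ [4]$), where Lemma \ref{vanisheven1} applies in top degree; the Fermat/quadric/deformation analysis is confined to that top-degree lemma, where the relevant groups are completely computed.
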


\begin{proof}
Suppose not, and let $k\geq n+1$ be such that $\omega^k\in H^k(Y,\mathbb{Z}_2(k))$ is non-zero and $\omega^{k+1}\in H^{k+1}(Y,\mathbb{Z}_2(k+1))$ vanishes. By Proposition \ref{cd} (ii), $k$ exists and $k\leq 2n$.

Consider the short exact sequence (\ref{rc}) applied to $Y$:
\begin{equation*}
H^k(Y,\mathbb{Z}_2(k+1))\stackrel{\pi^*}{\longrightarrow}
H^k(Y_{\C},\mathbb{Z}_2)\stackrel{\pi_*}{\longrightarrow} H^k(Y,\mathbb{Z}_2(k))\stackrel{\omega}{\longrightarrow}H^{k+1}(Y,\mathbb{Z}_2(k+1)).
\end{equation*}
By hypothesis, $\omega^k\in \Ima(\pi_*)$. By Proposition \ref{cohogeo} (ii), since $\omega^k\in H^k(Y,\mathbb{Z}_2(k))$ is non-zero, $k$ has to be even: $k=2l$.

 If $l$ were even, we would have $H^k(Y_{\C},\mathbb{Z}_2(k+1))^G=0$ by Proposition \ref{cohogeo} (iv), and
the Hochschild-Serre spectral sequence (\ref{HS}) would show that $\pi^*:H^k(Y,\mathbb{Z}_2(k+1))\to H^k(Y_{\C},\mathbb{Z}_2)$ is zero. Consequently, $\Ima(\pi_*)$ has no torsion by Proposition \ref{cohogeo} (iv). This is a contradiction and shows that $l$ is odd. 

Let $H\in H^2(Y,\mathbb{Z}_2(1))$ be the class of $\mathcal{O}_{\mathbb{P}^n_{\R}}(1)$. Since $\pi^*H^l=H_{\C}^l$, and taking into account Proposition \ref{cohogeo} (iv), the only class in $\Ima(\pi_*)$ that may be non-zero is $\pi_*\alpha_l$. Consequently, we have $\omega^{k}=\pi_*\alpha_l$.

Choose by Bertini theorem an $l$-dimensional linear subspace $\mathbb{P}^l_{\R}\subset\mathbb{P}^n_{\R}$ that is transverse to the smooth hypersurface $\{F=0\}$, and define $i:Z\hookrightarrow Y$ to be the inverse image of $\mathbb{P}^l_{\R}$ in $Y$: it is a smooth double cover of $\mathbb{P}^l_{\R}$ ramified over $\{F=0\}\cap \mathbb{P}^l_{\R}$.
Consider the following commutative diagram, where the left horizontal arrows are restrictions, the right horizontal arrows are Gysin morphisms and the vertical ones are those appearing in the exact sequence (\ref{rc}):
\begin{equation*}
\xymatrix{
H^{k}(Y_{\C},\mathbb{Z}_2)\ar[r]^-{i_{\C}^*}\ar[d]^{\pi_*}& H^{k}(Z_{\C},\mathbb{Z}_2)\ar[r]\ar[r]^-{i_{\C*}}\ar[d]^{\pi_*}&H^{2n}(Y_{\C},\mathbb{Z}_2) \ar[d]^{\pi_*}  \\
H^{k}(Y,\mathbb{Z}_2(k))\ar[r]^-{i^*}& H^{k}(Z,\mathbb{Z}_2(k))\ar[r]^-{i_*}&H^{2n}(Y,\mathbb{Z}_2(n+l))   \\
}
\end{equation*}

Look at $\alpha_l\in H^{k}(Y_{\C},\mathbb{Z}_2)$. We have $i_*i^*\pi_*\alpha_l=i_*i^*\omega^{k}=i_*\omega^k=0$ by Lemma \ref{vanisheven1} applied to $Z$. On the other hand, $\pi_*i_{\C*}i_{\C}^*\alpha_l=\pi_*(\alpha_l\cdot[Z_{\C}])=\pi_*(\alpha_l\cdot H_{\C}^{n-l})=\pi_*\alpha_n\neq 0$ because it is the generator of $H^{2n}(Y,\mathbb{Z}_2(n+l))\simeq\mathbb{Z}/2\mathbb{Z}$ as seen in Lemma \ref{cohotop}. This is a contradiction.
\end{proof}

\subsection{Cohomology over $\R$ when $n$ is odd}\label{calculcohod2n}
 As in the previous paragraph, we will reduce the computations we need to the case of a quadric. To do this, we collect a few results about the cohomology of quadrics. Analogues with $2$-torsion coefficients of some of these computations appear in \cite[\S 4]{KS3}.

  We denote by $Q^n:=\{Z^2+T_0^2+\dots+T_n^2=0\}\subset\mathbb{P}^{n+1}_{\R}$ the $n$-dimensional projective anisotropic quadric, by $U^n:=Q^n\setminus Q^{n-1}$ its affine counterpart, and by $H\in H^2(Q^n,\mathbb{Z}_2(1))$ the class of $\mathcal{O}_{\mathbb{P}^{n+1}_{\R}}(1)$.

\begin{lem}\label{cohoUn}The cohomology groups of $U^n$ are as follows:
\begin{equation*}
  H^k(U^n,\mathbb{Z}_2(j))=\left\{
    \begin{array}{@{} l c @{}}
      \mathbb{Z}_2 & \text{if $k=0$ and $j\equiv 0[2]$,} \\
      \mathbb{Z}/2\mathbb{Z}\cdot\omega^k & \text{if $1\leq k\leq n$ and $j\equiv k[2]$,}\\
      \mathbb{Z}_2 & \text{if $k=n$ and $j\equiv n+1[2]$,} \\
      0 & \text{otherwise.}
    \end{array}\right.
\end{equation*}
\end{lem}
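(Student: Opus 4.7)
The plan is to apply the Hochschild-Serre spectral sequence (\ref{HS}) after computing the $2$-adic cohomology of $U^n_{\C}$ together with its Galois action. First I would observe that $U^n_{\C}$ is homotopy-equivalent to an $n$-sphere: writing each coordinate as $T_\ell = X_\ell + i Y_\ell$, the complex manifold $U^n(\mathbb{C})$ retracts onto $\{X = 0,\; \sum Y_\ell^2 = 1\} \cong S^n$. Hence $H^q(U^n_{\C}, \mathbb{Z}_2) = \mathbb{Z}_2$ for $q \in \{0, n\}$ and vanishes otherwise; this is valid over any algebraically closed $\C$ by invariance of étale cohomology under extension of algebraically closed fields. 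The complex conjugation $\sigma$ acts on the retract as $Y \mapsto -Y$, i.e.\ the antipodal map of $S^n$, so $\sigma^*$ acts by $(-1)^{n+1}$ on $H^n$ and trivially on $H^0$. Taking the Tate twist into account, $G$ acts on $H^0(U^n_{\C}, \mathbb{Z}_2(j))$ by $(-1)^j$ and on $H^n(U^n_{\C}, \mathbb{Z}_2(j))$ by $(-1)^{n+j+1}$.

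With these inputs, I would run the spectral sequence $E_2^{p,q} = H^p(G, H^q(U^n_{\C}, \mathbb{Z}_2(j))) \Rightarrow H^{p+q}(U^n, \mathbb{Z}_2(j))$. Only the rows $q = 0$ and $q = n$ are non-zero, so the only possibly non-trivial differential is $d_{n+1}\colon E_{n+1}^{p,n} \to E_{n+1}^{p+n+1,0}$. The groups $H^p(G, \mathbb{Z}_2)$ are standard (for trivial action, $\mathbb{Z}_2, 0, \mathbb{Z}/2, 0, \mathbb{Z}/2, \dots$; for sign action, $0, \mathbb{Z}/2, 0, \mathbb{Z}/2, 0, \dots$), so the $E_2$-page is completely described. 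Crucially, Proposition \ref{cd}(iii) guarantees that $U^n$, being affine with no $\R$-points, has étale cohomological dimension $\leq n$, so $H^k(U^n, \mathbb{Z}_2(j)) = 0$ for $k > n$. This vanishing forces $d_{n+1}$ to be an isomorphism wherever both its source and target are non-zero, and it further constrains the way a surviving $E_\infty^{0,n}$ sits inside $H^n(U^n_{\C}, \mathbb{Z}_2(j))$. A four-way parity case analysis in $(n \bmod 2,\, j \bmod 2)$ then yields the asserted groups $H^k(U^n, \mathbb{Z}_2(j))$ for $k \leq n$.

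To close, I would identify $\omega^k$ as a generator of $H^k(U^n, \mathbb{Z}_2(j)) \cong \mathbb{Z}/2\mathbb{Z}$ when $1 \leq k \leq n$ and $j \equiv k \pmod 2$: since $\omega$ generates $H^1(\R, \mathbb{Z}_2(1)) \cong \mathbb{Z}/2\mathbb{Z}$ and pulls back along $U^n \to \Spec \R$, its iterated cup products lie in the right bidegrees, and via the edge map they span the surviving $E_\infty^{k,0}$ in the spectral sequence. The main obstacle is the careful bookkeeping in the spectral sequence analysis, especially the top-degree case $k = n$, where one must distinguish when $H^n(U^n, \mathbb{Z}_2(j))$ is $\mathbb{Z}_2$ (coming from a surviving $E_\infty^{0,n} = 2\mathbb{Z}_2 \subset H^n(U^n_{\C}, \mathbb{Z}_2(j))$) versus $\mathbb{Z}/2$. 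A secondary subtlety is the justification of $\sigma^* = (-1)^{n+1}$ over an arbitrary real closed field, which I would handle by a base-change argument from a countable real closed subfield embedded in $\mathbb{R}$.
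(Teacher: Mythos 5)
Your proof is correct and follows the same overall strategy as the paper: feed the geometric cohomology of $U^n_{\C}$, with its $G$-action, into the Hochschild--Serre spectral sequence, and use the bound $\mathrm{cd}(U^n)\leq n$ from Proposition \ref{cd} (iii) to force the differentials $d_{n+1}\colon E^{p,n}\to E^{p+n+1,0}$ to be surjective, which determines everything. Where you genuinely diverge is in the first step: the paper obtains $H^*(U^n_{\C},\mathbb{Z}_2)$ as a $G$-module from the known $G$-module description of projective quadrics in SGA 7, XII, Th\'eor\`eme 3.3, together with the localization sequence for $Q^{n-1}_{\C}\subset Q^n_{\C}$, whereas you compute it directly via the retraction of the affine complex quadric onto $S^n$ and the identification of complex conjugation with the antipodal map (degree $(-1)^{n+1}$), then transport the answer to arbitrary $\C$ by base change from a real closed subfield embeddable in $\mathbb{R}$. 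Both give $H^n(U^n_{\C},\mathbb{Z}_2)=\mathbb{Z}_2(n+1)$; your route is more self-contained but requires invoking the $G$-equivariant comparison between \'etale and Betti cohomology, while the paper's stays entirely within \'etale cohomology. One small imprecision: your claim that the vanishing above degree $n$ forces $d_{n+1}$ to be an \emph{isomorphism} whenever source and target are both non-zero is not literally true in the case $E^{0,n}=\mathbb{Z}_2\to E^{n+1,0}=\mathbb{Z}/2\mathbb{Z}$, where it is only forced to be surjective with kernel $2\mathbb{Z}_2\cong\mathbb{Z}_2$; your final paragraph shows you treat this case correctly, so this is a wording slip rather than a gap.
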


\begin{proof}
The geometric cohomology groups of $U^n$ are easily computed as $G$-modules from the description of the geometric cohomology groups of $Q^n$ and $Q^{n-1}$ as $G$-modules given in \cite[XII Th\'eor\`eme 3.3]{SGA72}, and from the long exact sequence of cohomology with support associated with $Q^{n-1}_{\C}\subset Q^n_{\C}$:
$$\dots \to H^{k-2}(Q^{n-1}_{\C},\mathbb{Z}_2(-1))\to H^{k}(Q^n_{\C},\mathbb{Z}_2)\to H^{k}(U^n_{\C},\mathbb{Z}_2)\to\dots$$
One gets:
\begin{equation*}
  H^k(U^n_{\C},\mathbb{Z}_2)=\left\{
    \begin{array}{@{} l c @{}}
      \mathbb{Z}_2 & \text{if $k=0$,} \\
      \mathbb{Z}_2(n+1) & \text{if $k=n$,}\\
      0 & \text{otherwise.}
    \end{array}\right.
  \label{eq4}
\end{equation*}
Consider the Hochschild-Serre spectral sequence (\ref{HS}) for $U^n$. The only possibly non-zero arrows in this spectral sequence are the $d_n:E_{n+1}^{p,n}\to E_{n+1}^{p+n+1,0}$. But these are necessarily surjective because $H^k(U^n,\mathbb{Z}_2(j))=0$ for $k>n$ by Proposition \ref{cd} (iii). This allows to compute the spectral sequence entirely, and to deduce the lemma.
\end{proof}

\begin{lem}\label{generateurs}
Suppose that $n$ is odd. 
Then $H^{n}(Q^{n},\mathbb{Z}_2(n))\simeq (\mathbb{Z}/2\mathbb{Z})^{\lceil \frac{n}{4}\rceil}$ gene\-rated by $\omega^{n},\omega^{n-4}H^2,\dots, \omega^{n-4\lfloor \frac{n}{4}\rfloor}H^{2\lfloor \frac{n}{4}\rfloor}$. 

Moreover, the $2$-torsion subgroup of $H^{n+1}(Q^n,\mathbb{Z}_2(n+1))$ is $H^{n+1}(Q^n,\mathbb{Z}_2(n+1))[2]\simeq (\mathbb{Z}/2\mathbb{Z})^{\lceil \frac{n}{4}\rceil}$, generated by $\omega^{n+1},\omega^{n-3}H^2,\dots, \omega^{n+1-4\lfloor \frac{n}{4}\rfloor}H^{2\lfloor \frac{n}{4}\rfloor}$.
\end{lem}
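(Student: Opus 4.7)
The plan is to compute $H^n(Q^n,\mathbb{Z}_2(n))$ via the Hochschild--Serre spectral sequence (\ref{HS}) and then to extract the second assertion from the exact sequence (\ref{rc}). The starting input is the $G$-module structure of $H^\ast(Q^n_{\C},\mathbb{Z}_2)$: for $n$ odd the geometric cohomology is supported in even degrees and vanishes in the middle, $H^n(Q^n_{\C},\mathbb{Z}_2)=0$ (a classical calculation for odd-dimensional smooth quadrics), while each $H^{2l}(Q^n_{\C},\mathbb{Z}_2(l))$ is, by Proposition \ref{cohogeo}(iii)--(iv), a copy of $\mathbb{Z}_2$ with trivial Galois action---the $G$-invariance of $\alpha_l$ when $l>n/2$ being forced by $2\sigma\alpha_l=\sigma H_{\C}^l=H_{\C}^l=2\alpha_l$ in a torsion-free group. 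Consequently $H^{2l}(Q^n_{\C},\mathbb{Z}_2(n))\simeq\mathbb{Z}_2(n-l)$ as $G$-modules.

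For the first statement, I tabulate the $E_2$-page. A parity analysis of $E_2^{p,2l}=H^p(G,\mathbb{Z}_2(n-l))$ shows that the nonzero contributions to total degree $n$ lie precisely at the positions $(p,q)=(n-4k,4k)$ for $0\le k\le\lfloor n/4\rfloor$, each equal to $\mathbb{Z}/2\mathbb{Z}$---a total of $\lceil n/4\rceil$ summands. Each $\omega^{n-4k}H^{2k}$ is a genuine cup product of cohomology classes, hence a permanent cycle of filtration degree $\geq n-4k$; by multiplicativity its image in $E_\infty^{n-4k,4k}$ is the cup product of the generators of $H^{n-4k}(G,\mathbb{Z}_2(n-4k))$ and $H^{4k}(Q^n_{\C},\mathbb{Z}_2)^G$, hence generates $E_2^{n-4k,4k}=\mathbb{Z}/2\mathbb{Z}$. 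Thus all relevant $E_2$-entries survive to $E_\infty$. Since the classes $\omega^{n-4k}H^{2k}$ lie in strictly distinct filtration levels, project non-trivially to the corresponding graded quotients, and are all $2$-torsion, they form a basis of $H^n(Q^n,\mathbb{Z}_2(n))\simeq(\mathbb{Z}/2\mathbb{Z})^{\lceil n/4\rceil}$.

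For the second statement, the exact sequence (\ref{rc}) with $k=n$ and $j=n+1$, combined with $H^n(Q^n_{\C},\mathbb{Z}_2)=0$ and the identification $\mathbb{Z}_2(n+2)\simeq\mathbb{Z}_2(n)$ of Galois modules, produces a short exact sequence
\begin{equation*}
0\longrightarrow H^n(Q^n,\mathbb{Z}_2(n))\xrightarrow{\ \omega\cdot\ } H^{n+1}(Q^n,\mathbb{Z}_2(n+1))\longrightarrow C\longrightarrow 0
\end{equation*}
with $C$ a subgroup of $H^{n+1}(Q^n_{\C},\mathbb{Z}_2)\simeq\mathbb{Z}_2$, hence torsion-free. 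Taking $2$-torsion yields the isomorphism $\omega\cdot-\colon H^n(Q^n,\mathbb{Z}_2(n))\xrightarrow{\sim} H^{n+1}(Q^n,\mathbb{Z}_2(n+1))[2]$, sending $\omega^{n-4k}H^{2k}$ to $\omega^{n+1-4k}H^{2k}$, and producing the claimed generators. The most delicate point will be controlling the Hochschild--Serre differentials on the relevant diagonal; this is dispatched \emph{a priori} by realising each candidate generator as an actual cohomology class, and the detour through (\ref{rc}) further sidesteps a potential extension problem in a direct spectral-sequence computation of $H^{n+1}$ (between a $\mathbb{Z}_2$-summand at $E_\infty^{0,n+1}$ arising when $n\equiv 3\pmod 4$ and the higher-filtration $2$-torsion).
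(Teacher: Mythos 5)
Your second half is fine and coincides with the paper's argument: the sequence (\ref{rc}) with $j=n+1$, together with $H^{n}(Q^n_{\C},\mathbb{Z}_2)=0$ for $n$ odd and the torsion-freeness of $H^{n+1}(Q^n_{\C},\mathbb{Z}_2)$, gives $0\to H^n(Q^n,\mathbb{Z}_2(n))\xrightarrow{\omega}H^{n+1}(Q^n,\mathbb{Z}_2(n+1))\to\mathbb{Z}_2$, and identifying the $2$-torsion of the middle term with the image of the first map is exactly the paper's sequence (\ref{finalomega}).

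The first half, however, has a genuine gap. Your $E_2$-page analysis is correct and yields the upper bound $\#H^n(Q^n,\mathbb{Z}_2(n))\leq 2^{\lceil n/4\rceil}$, but the survival claim is not established. Multiplicativity of the Hochschild--Serre spectral sequence tells you that the symbol of $\omega^{n-4k}H^{2k}$ in $E_\infty^{n-4k,4k}$ equals the product of the symbols of $\omega^{n-4k}$ and $H^{2k}_{\C}$ \emph{computed in $E_\infty$}; it does not tell you that this product is nonzero. The group $E_\infty^{n-4k,4k}$ is a quotient of a subgroup of $E_2^{n-4k,4k}$, and the generator of $E_2^{n-4k,4k}$ could perfectly well be a boundary: the same parity count shows that $E_2^{n-4k-3,\,4k+2}=H^{n-4k-3}(G,H^{4k+2}(Q^n_{\C},\mathbb{Z}_2(n)))$ is a nonzero copy of $\mathbb{Z}/2\mathbb{Z}$, so there is an a priori nonzero differential $d_3\colon E_3^{n-4k-3,\,4k+2}\to E_3^{n-4k,\,4k}$ (and similarly $d_7$, etc.). If such a differential were nonzero, your class $\omega^{n-4k}H^{2k}$ would merely lie in deeper filtration (possibly being zero), with no contradiction; the step ``hence generates $E_2^{n-4k,4k}$'' conflates $E_2$ with $E_\infty$. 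This is not a purely formal worry: since $E_2^{p,2n}\neq 0$ for arbitrarily large even $p$ while $H^{k}(Q^n,\mathbb{Z}_2(j))=0$ for $k>2n$ by Proposition \ref{cd}(ii), the spectral sequence for $Q^n$ must have nonzero differentials somewhere, so one has to identify exactly which ones vanish. The paper obtains the missing lower bound (equivalently, the linear independence of $\omega^{n},\omega^{n-4}H^2,\dots$) by a different mechanism: a decreasing induction along $Q^{n-r-1}\subset Q^{n-r}$ using the localization sequence whose open piece is the affine quadric $U^{n-r}$, whose cohomology (Lemma \ref{cohoUn}) is pinned down by the cohomological dimension bound of Proposition \ref{cd}(iii) for affine varieties without real points. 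Some such extra geometric input is needed to close your argument.
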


\begin{proof}
Fix $r\geq 0$. The Gysin morphism $H^{n-2r-2}(Q^{n-r-1},\mathbb{Z}_2(n-r-1))\to H^{n-2r}(Q^{n-r},\mathbb{Z}_2(n-r))$ is part of a long exact sequence of cohomology with supports. In this exact sequence, the morphisms $H^{n-2r-1}(Q^{n-r},\mathbb{Z}_2(n-r))\to H^{n-2r-1}(U^{n-r},\mathbb{Z}_2(n-r))$ and $H^{n-2r}(Q^{n-r},\mathbb{Z}_2(n-r))\to H^{n-2r}(U^{n-r},\mathbb{Z}_2(n-r))$ are surjective. Indeed, in the degrees that come up, all the cohomology of $U^{n-r}$ comes from the base field by Lemma \ref{cohoUn}, hence a fortiori from $Q^{n-r}$.

It follows that this Gysin morphism is injective, with a cokernel naturally isomorphic to $H^{n-2r}(U^{n-r},\mathbb{Z}_2(n-r))=H^{n-2r}(\R,\mathbb{Z}_2(n-r))$. This allows to compute $H^{n-2r}(Q^{n-r},\mathbb{Z}_2(n-r))$ by decreasing induction on $r$, and shows, since $n$ is odd, that $H^{n}(Q^{n},\mathbb{Z}_2(n))\simeq (\mathbb{Z}/2\mathbb{Z})^{\lceil \frac{n}{4}\rceil}$ generated by $\omega^{n},\omega^{n-4}H^2,\dots, \omega^{n-4\lfloor \frac{n}{4}\rfloor}H^{2\lfloor \frac{n}{4}\rfloor}$.

The lemma follows by considering the long exact sequence (\ref{rc}), and using our knowledge of the geometric cohomology of $Q^n$ to get:
\begin{equation}\label{finalomega}0
\to H^n(Q^n,\mathbb{Z}_2(n))\stackrel{\omega}{\longrightarrow} H^{n+1}(Q^n,\mathbb{Z}_2(n+1))\to
 H^{n+1}(Q^n_{\C},\mathbb{Z}_2)\simeq 
\mathbb{Z}_2.\qedhere
\end{equation}
\end{proof}

In the remaining of this paragraph, we will continue to suppose that $n$ is odd. We consider the generator $\gamma$ of $H^{n+1}(Q^n_{\C},\mathbb{Z}_2)\simeq 
\mathbb{Z}_2$ such that $2\gamma=H_{\C}^{\frac{n+1}{2}}$ \cite[XII Th\'eor\`eme 3.3]{SGA72}.

If $n\equiv 1[4]$, define $\delta:=\pi_*\gamma\in H^{n+1}(Q^n,\mathbb{Z}_2(n+1))$. If $n\equiv 3[4]$, define $\delta:=\pi_*\gamma-H^{\frac{n+1}{2}}\in H^{n+1}(Q^n,\mathbb{Z}_2(n+1))$.

\begin{lem}\label{pasvanishoddquadric} Suppose that $n$ is odd. Then $\delta\in H^{n+1}(Q^n,\mathbb{Z}_2(n+1))[2]$. Moreover, $\delta$ does not belong to the subgroup of $H^{n+1}(Q^n,\mathbb{Z}_2(n+1))[2]$ generated by $\omega^{n-3}H^2,\dots, \omega^{n+1-4\lfloor \frac{n}{4}\rfloor}H^{2\lfloor \frac{n}{4}\rfloor}$.
\end{lem}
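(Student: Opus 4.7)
First, for the $2$-torsion claim, I would compute $2\delta$ directly. Since $2\gamma=H_{\C}^{(n+1)/2}=\pi^* H^{(n+1)/2}$, we have $2\pi_*\gamma=\pi_*\pi^* H^{(n+1)/2}$. The class $H^{(n+1)/2}$ naturally sits in $H^{n+1}(Q^n,\mathbb{Z}_2((n+1)/2))$, and whether $(n+1)/2$ has the same parity as $n$ or as $n+1$ depends on $n\bmod 4$. When $n\equiv 1\pmod{4}$, the weight $(n+1)/2$ is odd (same parity as $n$), so $\pi_*\pi^*$ is the composition of two consecutive arrows in (\ref{rc}) and vanishes by exactness; hence $2\pi_*\gamma=0=2\delta$. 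When $n\equiv 3\pmod{4}$, the weight $(n+1)/2$ is even (matching $n+1$); the composition $\pi_*\pi^*$ as an endomorphism of $H^{n+1}(Q^n,\mathbb{Z}_2(n+1))$ corresponds at the level of $G$-modules to $\mathbb{Z}_2(n+1)\hookrightarrow \pi_*\mathbb{Z}_2\simeq\mathbb{Z}_2[G]\twoheadrightarrow \mathbb{Z}_2(n+1)$, which is multiplication by $2$, so $2\pi_*\gamma=2H^{(n+1)/2}$ and $2\delta=0$ after subtracting.

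For the nonvanishing claim I would first verify $\pi^*\delta=0$. The composition $\pi^*\pi_*$ on $H^{n+1}(Q^n_{\C},\mathbb{Z}_2)$ equals $1+\sigma$, and $\sigma\gamma=(-1)^{(n+1)/2}\gamma$ (which follows from the $G$-invariance of the algebraic class $2\gamma=H_{\C}^{(n+1)/2}$ in its natural weight $(n+1)/2$, together with the weight-twist correction). Hence $\pi^*\pi_*\gamma=(1+(-1)^{(n+1)/2})\gamma$, which is $0$ for $n\equiv 1\pmod{4}$ and $2\gamma=\pi^* H^{(n+1)/2}$ for $n\equiv 3\pmod{4}$; in both cases $\pi^*\delta=0$. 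By the exact sequence (\ref{finalomega}), we may write $\delta=\omega\delta_0$ for a unique $\delta_0\in H^n(Q^n,\mathbb{Z}_2(n))$. By Lemma \ref{generateurs}, $H^n(Q^n,\mathbb{Z}_2(n))=\bigoplus_{k=0}^{\lfloor n/4\rfloor}\mathbb{Z}/2\cdot\omega^{n-4k}H^{2k}$, and the subgroup from the statement equals $\omega\cdot N'$ where $N':=\langle\omega^{n-4k}H^{2k}:k\geq 1\rangle$; thus the desired nonvanishing is equivalent to $\delta_0\notin N'$, i.e., $\delta_0$ has a nonzero $\omega^n$-component. Restricting to $U^n:=Q^n\setminus Q^{n-1}$, Lemma \ref{cohoUn} gives $H^n(U^n,\mathbb{Z}_2(n))=\mathbb{Z}/2\cdot\omega^n$, and since $H|_{U^n}=0$ the restriction kills every generator of $N'$ and factors through the projection $H^n(Q^n,\mathbb{Z}_2(n))/N'\simeq \mathbb{Z}/2\cdot\omega^n$. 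It therefore suffices to show that $\delta_0|_{U^n}$ is the nonzero generator.

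The main obstacle is this final step, because $\delta|_{U^n}\in H^{n+1}(U^n,\mathbb{Z}_2(n+1))=0$ vanishes automatically, so the relation $\omega\delta_0=\delta$ restricted to $U^n$ is tautological and gives no direct information about $\delta_0|_{U^n}$. My plan is to use the Gysin long exact sequence for the pair $(Q^n,Q^{n-1})$ to write $\delta=i_*\delta'$ for some $\delta'\in H^{n-1}(Q^{n-1},\mathbb{Z}_2(n))$ (possible because $H^{n+1}(U^n,\mathbb{Z}_2(n+1))=0$ forces $i_*$ to be surjective in this degree), and to exploit the geometric identification $\gamma=i_*[L_1]$, where $[L_1]\in H^{n-1}(Q^{n-1}_{\C},\mathbb{Z}_2)$ is the class of one of the two Galois-conjugate families of maximal isotropic $(n-1)/2$-dimensional linear subspaces in the even-dimensional quadric $Q^{n-1}_{\C}$. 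A diagram chase through (\ref{rc}) applied to $Q^n$ and to $U^n$, connected via the Gysin residue $H^n(U^n,\mathbb{Z}_2(n))\to H^{n-1}(Q^{n-1},\mathbb{Z}_2(n-1))$, should identify $\delta_0|_{U^n}$ with the image of $[L_1]-[L_2]$ under appropriate $\pi_*$ and residue maps, namely the nonzero generator $\omega^n$. Tracking signs and Galois weights carefully through this chase is the principal technical challenge.
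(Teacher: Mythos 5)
Your argument for the torsion claim is correct: computing $2\delta=\pi_*\pi^*H^{\frac{n+1}{2}}$ and splitting according to the parity of $\frac{n+1}{2}$ (vanishing of two consecutive arrows of (\ref{rc}) when $n\equiv 1\,[4]$, multiplication by $2$ when $n\equiv 3\,[4]$) is a valid variant of the paper's argument, which instead observes that for $n\equiv 1\,[4]$ the whole group $H^{n+1}(Q^n,\mathbb{Z}_2(n+1))$ is $2$-torsion because $H^{n+1}(Q^n,\mathbb{Z}_2(n+1))\to H^{n+1}(Q^n_{\C},\mathbb{Z}_2)$ vanishes for weight reasons. Your reduction of the non-membership claim is also sound: $\pi^*\delta=0$, so $\delta=\omega\delta_0$ with $\delta_0\in H^n(Q^n,\mathbb{Z}_2(n))$ unique by (\ref{finalomega}), and since $H$ restricts to zero on $U^n$ while $\omega^n$ restricts to the generator of $H^n(U^n,\mathbb{Z}_2(n))\simeq\mathbb{Z}/2\mathbb{Z}$ (Lemma \ref{cohoUn}), the statement is equivalent to $\delta_0|_{U^n}\neq 0$.

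The gap is that this last nonvanishing — which is the entire content of the lemma — is not proved but only sketched, and the sketch as written cannot close. You propose to detect $\delta_0|_{U^n}$ through the residue map $H^n(U^n,\mathbb{Z}_2(n))\to H^{n-1}(Q^{n-1},\mathbb{Z}_2(n-1))$; but the restriction $H^n(Q^n,\mathbb{Z}_2(n))\to H^n(U^n,\mathbb{Z}_2(n))$ is surjective ($\omega^n$ comes from the base field, hence from $Q^n$ — this is exactly the surjectivity used in the proof of Lemma \ref{generateurs}), so that residue map is identically zero and carries no information about $\delta_0|_{U^n}$. More generally, any chase starting from $\delta|_{U^n}=\omega\cdot(\delta_0|_{U^n})\in H^{n+1}(U^n,\mathbb{Z}_2(n+1))=0$ is tautological, as you yourself note. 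The paper circumvents the computation of $\delta_0$ altogether: it exploits the relation $\delta\cdot\omega=\pi_*\gamma\cdot\omega=0$ (projection formula, $\pi^*\omega=0$), shows via Lemma \ref{cohoUn} that the composite Gysin map $H^{n-2}(Q^{n-2},\mathbb{Z}_2(n))\to H^{n+2}(Q^n,\mathbb{Z}_2(n+2))$ in the bottom row of (\ref{Gysins}) is injective, and deduces that if $\delta$ were a combination of $\omega^{n-3}H^2,\dots$, i.e.\ the Gysin image of a combination $\epsilon$ of $\omega^{n-3},\dots$ on $Q^{n-2}$, then $\epsilon\cdot\omega=0$, contradicting the independence of $\omega^{n-2},\dots$ in $H^{n-2}(Q^{n-2},\mathbb{Z}_2(n))$ given by Lemma \ref{generateurs}. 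You would need either to adopt such an argument or to supply a genuinely different mechanism (your idea of using the ruling classes $[L_1],[L_2]$ of $Q^{n-1}_{\C}$ is not implausible, but the required Galois and sign bookkeeping is exactly what is missing, and the diagram you indicate for it factors through a zero map).
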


\begin{proof}
We suppose that $n\equiv 1[4]$, the arguments when $n\equiv 3[4]$ are analogous.

Since $G$ acts on $H^{n+1}(Q^n_{\C},\mathbb{Z}_2)\simeq 
\mathbb{Z}_2$ by multiplication by $(-1)^{\frac{n+1}{2}}$ and since $n+1\not\equiv\frac{n+1}{2}[2]$, the natural morphism $H^{n+1}(Q^n,\mathbb{Z}_2(n+1))\to H^{n+1}(Q^n_{\C},\mathbb{Z}_2)$ is zero. From the exact sequence (\ref{finalomega}), this implies that $H^{n+1}(Q^n,\mathbb{Z}_2(n+1))$ is a $2$-torsion group, so that $\delta$ is $2$-torsion.

Let us check that $\delta$ is nonzero. From the exact sequence:
$$H^{n+1}(Q^n,\mathbb{Z}_2(n))\stackrel{\pi^*}{\longrightarrow}  H^{n+1}(Q^n_{\C}, \mathbb{Z}_2)\stackrel{\pi_*}{\longrightarrow} H^{n+1}(Q^n,\mathbb{Z}_2(n+1)),$$
we see that it suffices to prove that there does not exist a cohomology class $\zeta\in H^{n+1}(Q^n,\mathbb{Z}_2(n))$ such that $\pi^*\zeta=\gamma$. If such a class existed, $\pi^*(\zeta\cdot H^{\frac{n-1}{2}})$ would be a generator of $H^{2n}(Q^n_{\C},\mathbb{Z}_2)$. This would contradict the fact, proven in Lemma \ref{cohotop}, that the cokernel of $H^{2n}(Q^n,\mathbb{Z}_2(n))\to H^{2n}(Q^n_{\C},\mathbb{Z}_2)$ is $H^{2n}(Q^n,\mathbb{Z}_2(n+1))\simeq \mathbb{Z}/2\mathbb{Z}$.      

 Introduce now the commutative diagram below, whose horizontal arrows are Gysin morphisms:
\begin{equation}\label{Gysins}
\begin{gathered}
\xymatrix@C=1em{
H^{n-3}(Q^{n-2},\mathbb{Z}_2(n-1))\ar[r]\ar[d]^{\omega}& H^{n-1}(Q^{n-1},\mathbb{Z}_2(n))\ar[r]\ar[d]^{\omega}&H^{n+1}(Q^{n},\mathbb{Z}_2(n+1)) \ar[d]^{\omega}  \\
H^{n-2}(Q^{n-2},\mathbb{Z}_2(n))\ar[r]& H^{n}(Q^{n-1},\mathbb{Z}_2(n+1))\ar[r]&H^{n+2}(Q^{n},\mathbb{Z}_2(n+2))  \\
}
\end{gathered}
\end{equation}

Let us show that the lower horizontal arrows of (\ref{Gysins}) are injective.
Considering $\omega^{n-1}\in H^{n-1}(Q^{n-1},\mathbb{Z}_2(n+1))$ and using Lemma \ref{cohoUn} shows that $H^{n-1}(Q^{n-1},\mathbb{Z}_2(n+1))\to H^{n-1}(U^{n-1},\mathbb{Z}_2(n+1))$ is surjective, so that the map $H^{n-2}(Q^{n-2},\mathbb{Z}_2(n))\to H^{n}(Q^{n-1},\mathbb{Z}_2(n+1))$ is injective. Since $H^{n+1}(U^{n},\mathbb{Z}_2(n+2))=0$ by Lemma \ref{cohoUn}, $H^{n}(Q^{n-1},\mathbb{Z}_2(n+1))\to H^{n+2}(Q^{n},\mathbb{Z}_2(n+2))$ is also injective.

Suppose for contradiction that $\delta$ may be written as a linear combination of the classes $\omega^{n-3}H^2,\dots, \omega^{n+1-4\lfloor \frac{n}{4}\rfloor}H^{2\lfloor \frac{n}{4}\rfloor}$. Then, it is the image by the Gysin morphism $H^{n-3}(Q^{n-2},\mathbb{Z}_2(n-1))\to H^{n+1}(Q^{n},\mathbb{Z}_2(n+1))$ of a class $\epsilon$ that is a linear combination of $\omega^{n-3},\dots, \omega^{n+1-4\lfloor \frac{n}{4}\rfloor}H^{2\lfloor \frac{n}{4}\rfloor-2}$. The image of $\epsilon $ in $H^{n+2}(Q^{n},\mathbb{Z}_2(n+2))$ is $\delta\cdot\omega=\pi_*\gamma\cdot\omega=0$. By the injectivity result just proved, $\epsilon\cdot\omega=0$.
It follows that $\omega^{n-2},\dots, \omega^{n+2-4\lfloor \frac{n}{4}\rfloor}H^{2\lfloor \frac{n}{4}\rfloor-2}$ are not independent in $H^{n-2}(Q^{n-2},\mathbb{Z}_2(n))$, contradicting Lemma \ref{generateurs}.
\end{proof}


We return to our double cover $Y\to\mathbb{P}^n_{\R}$. We recall from Proposition \ref{cohogeo} that $H^{n+1}(Y_{\C},\mathbb{Z}_2)=\mathbb{Z}_2(n+1)$ with a generator $\alpha:=\alpha_{\frac{n+1}{2}}$ such that $2\alpha=H_{\C}^{\frac{n+1}{2}}$.

If $n\equiv 1[4]$, define $\beta:=\pi_*\alpha\in H^{n+1}(Y,\mathbb{Z}_2(n+1))$. If $n\equiv 3[4]$, define $\beta:=\pi_*\alpha-H^{\frac{n+1}{2}}\in H^{n+1}(Y,\mathbb{Z}_2(n+1))$, where $H\in H^2(Y,\mathbb{Z}_2(1))$ is the class of $\mathcal{O}_{\mathbb{P}^n_{\R}}(1)$.

\begin{prop}\label{combiodd} 
Suppose that $n$ is odd. Then $\omega^{n+1}$ is a linear combination of $\omega^{n-3}H^2,\dots, \omega^{n+1-4\lfloor \frac{n}{4}\rfloor}H^{2\lfloor \frac{n}{4}\rfloor}$ and $\beta$ in $H^{n+1}(Y,\mathbb{Z}_2(n+1))$.
\end{prop}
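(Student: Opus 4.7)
The plan is to follow the three-step pattern of Lemma \ref{vanisheven1}: first, establish the analogous relation on the quadric $Q^n$; second, pull it back to the Fermat double cover $Y^\dagger$ via the map $\mu\colon Y^\dagger\to Q^n$; and third, propagate the conclusion to a general $Y$ using the family $\mathcal{Y}\to S$ of paragraph \ref{parsemialg}.

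\textbf{Steps 1 and 2.} First I would invoke Lemma \ref{generateurs} together with the proof of Lemma \ref{pasvanishoddquadric} to see that $H^{n+1}(Q^n,\mathbb{Z}_2(n+1))$ is entirely $2$-torsion with $\mathbb{F}_2$-basis $\{\omega^{n+1},\omega^{n-3}H^2,\dots,\omega^{n+1-4\lfloor n/4\rfloor}H^{2\lfloor n/4\rfloor}\}$. Since Lemma \ref{pasvanishoddquadric} provides a $2$-torsion class $\delta$ outside the span of the latter $\lfloor n/4\rfloor$ classes, swapping $\delta$ for $\omega^{n+1}$ yields a second $\mathbb{F}_2$-basis, producing $a_k,b\in\mathbb{F}_2$ with $\omega^{n+1}=\sum_k a_k\omega^{n+1-4k}H^{2k}+b\,\delta$ on $Q^n$. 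Then I would check that $\mu$ satisfies $\mu^*H=(d/2)H$, and combining $2\gamma=H_{Q^n,\C}^{(n+1)/2}$, $2\alpha=H_{Y^\dagger,\C}^{(n+1)/2}$ with the torsion-freeness of $H^{n+1}(Y^\dagger_\C,\mathbb{Z}_2)\simeq\mathbb{Z}_2$, conclude $\mu^*\gamma=(d/2)^{(n+1)/2}\alpha$, and hence by the naturality of (\ref{rc}) also $\mu^*\delta=(d/2)^{(n+1)/2}\beta$ (both parity cases $n\equiv 1,3\pmod 4$ being compatible with the $H^{(n+1)/2}$ correction). Pulling back the $Q^n$-relation then yields on $Y^\dagger$ the desired linear combination
\[\omega^{n+1}=\sum_{k=1}^{\lfloor n/4\rfloor}a_k(d/2)^{2k}\omega^{n+1-4k}H^{2k}+b(d/2)^{(n+1)/2}\beta.\]

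\textbf{Step 3 and main obstacle.} All classes involved are $2$-torsion (clear for the $\omega^kH^{*}$ monomials; for $\beta$ one computes $\pi^*\beta=0$ from the $G$-action on $\alpha$ by $(-1)^{(n+1)/2}$, placing $\beta$ in $\omega\cdot H^n(Y,\mathbb{Z}_2(n))$ by exactness of (\ref{rc}), hence $2\beta=0$), so the identity only needs to be verified modulo $2$. I would then argue as in the proof of Lemma \ref{vanisheven1} using Scheiderer's tools: the sheaf $i^*R^{n+1}p_{b*}\mathbb{Z}/2\mathbb{Z}$ is locally constant on the connected set $S_r$, the classes $\omega$ and $H$ come from the base of the family and give global sections, and the fiberwise class $\bar\beta$ assembles into a section too, because $\bar\alpha_s$ is the unique nonzero element of $H^{n+1}(\mathcal{Y}_{s,\C},\mathbb{Z}/2)\simeq\mathbb{Z}/2$ (hence monodromy-invariant, as the only automorphism of $\mathbb{Z}/2$ is trivial) and the operations $\pi_*$ and subtraction of the global class $H^{(n+1)/2}$ are compatible with specialisation. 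The section $R:=\omega^{n+1}-\sum_k c_k\omega^{n+1-4k}H^{2k}-c_\beta\bar\beta$ with the coefficients $c_k,c_\beta\in\mathbb{F}_2$ from step 2 then vanishes over $s^\dagger$, hence throughout $S_r$, and in particular over $s$. The main obstacle will be this last globalisation: reducing modulo $2$ is essential, because over $\mathbb{Z}_2$ the rank-$1$ local system $R^{n+1}p_{\C*}\mathbb{Z}_2$ could carry a sign monodromy, whereas modulo $2$ no such issue can arise.
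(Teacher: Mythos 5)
Your proposal follows the paper's proof: the paper likewise reduces to the Fermat cover $Y^{\dagger}$ by the deformation argument of Lemma \ref{vanisheven1}, pulls back along $\mu\colon Y^{\dagger}\to Q^n$ using $\mu^*H=(d/2)H$ and $\mu^*\delta=(d/2)^{(n+1)/2}\beta$, and obtains the relation on $Q^n$ exactly by your ``basis swap'' from Lemmas \ref{generateurs} and \ref{pasvanishoddquadric}. Two points in your write-up deserve attention. First, a harmless misstatement: $H^{n+1}(Q^n,\mathbb{Z}_2(n+1))$ is \emph{not} entirely $2$-torsion when $n\equiv 3\,[4]$ (the class $H^{\frac{n+1}{2}}$ maps to $2\gamma\neq 0$ in $H^{n+1}(Q^n_{\C},\mathbb{Z}_2)$, hence is non-torsion); the whole group is torsion only when $n\equiv 1\,[4]$, where $\pi^*$ vanishes for sign reasons. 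Your argument is unaffected, since it only uses the subgroup $H^{n+1}(Q^n,\mathbb{Z}_2(n+1))[2]$, for which Lemma \ref{generateurs} gives the basis and which contains $\delta$ by Lemma \ref{pasvanishoddquadric}.

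Second, a genuine though fillable gap in Step 3: the inference ``all classes involved are $2$-torsion, so the identity only needs to be verified modulo $2$'' is not valid in general --- in $\mathbb{Z}/4\mathbb{Z}$ the elements $0$ and $2$ are both $2$-torsion and congruent mod $2$, yet distinct. What you actually need is that reduction mod $2$ is injective on $H^{n+1}(Y,\mathbb{Z}_2(n+1))[2]$, i.e.\ that $H^{n+1}(Y,\mathbb{Z}_2(n+1))[2]\cap 2H^{n+1}(Y,\mathbb{Z}_2(n+1))=0$. This does hold here: any torsion class is killed by $\pi^*$ because $H^{n+1}(Y_{\C},\mathbb{Z}_2)$ is torsion free (Proposition \ref{cohogeo} (i)), hence lies in $\omega\cdot H^{n}(Y,\mathbb{Z}_2(n+2))$ by exactness of (\ref{rc}) and is therefore killed by $2$; so the torsion subgroup has exponent $2$ and the intersection above vanishes. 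With that supplement, your mod-$2$ transport along the locally constant sheaf on $S_r$ --- which is precisely the detail the paper leaves implicit in ``as in the proof of Lemma \ref{vanisheven1}'', including the observation that $\bar\beta$ spreads out because $\bar\alpha$ is the unique nonzero element of $H^{n+1}(\mathcal{Y}_{t,\C},\mathbb{Z}/2\mathbb{Z})$ --- does yield the statement over $Y$.
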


\begin{proof}
Using a deformation argument as in the proof of Lemma \ref{vanisheven1}, one reduces to the case of the Fermat double cover $Y^{\dagger}:=\{Z^2+F^{\dagger}=0\}$, where $F^{\dagger}:=X_0^d+\dots+X_n^d$.

As in Lemma \ref{vanishFermat}, we consider the morphism $\mu:Y^{\dagger}\to Q^n$ to the quadric $Q^n:=\{Z^2+T_0^2+\dots+T_n^2=0\}\subset\mathbb{P}^{n+1}_{\R}$, defined by $T_i=X_i^{d/2}$. Note that $\mu^*H=(\frac{d}{2})H$, so that $\mu_{\C}^*\gamma=(\frac{d}{2})^{\frac{n+1}{2}}\alpha$ and $\mu^*\delta=(\frac{d}{2})^{\frac{n+1}{2}}\beta$.

By Lemma \ref{generateurs}, the group $H^{n+1}(Q^n,\mathbb{Z}_2(n+1))[2]$ is freely generated by $\omega^{n+1}$, $\omega^{n-3}H^2, \dots,  \omega^{n+1-4\lfloor \frac{n}{4}\rfloor}H^{2\lfloor \frac{n}{4}\rfloor}$. By Lemma \ref{pasvanishoddquadric}, $\delta$ does not belong to the subgroup generated by $\omega^{n-3}H^2, \dots, \omega^{n+1-4\lfloor \frac{n}{4}\rfloor}H^{2\lfloor \frac{n}{4}\rfloor}$. Consequently, $\omega^{n+1}$ is a linear combination of $\delta,\omega^{n-3}H^2,\dots, \omega^{n+1-4\lfloor \frac{n}{4}\rfloor}H^{2\lfloor \frac{n}{4}\rfloor}$
in $H^{n+1}(Q^n,\mathbb{Z}_2(n+1))[2]$.

Pulling back this relation by $\mu$, it follows that $\omega^{n+1}$ is a linear combination of $\beta,\omega^{n-3}H^2,\dots, \omega^{n+1-4\lfloor \frac{n}{4}\rfloor}H^{2\lfloor \frac{n}{4}\rfloor}$
in $H^{n+1}(Y^{\dagger},\mathbb{Z}_2(n+1))$, as wanted.
\end{proof}

\begin{cor}\label{coniveaulien}
Suppose that $n$ is odd.
If $\alpha\in H^{n+1}(Y_{\C},\mathbb{Z}_2)$ has coniveau $\geq 2$, then $\omega^{n+1}\in H^{n+1}(Y,\mathbb{Z}_2(n+1))$ has coniveau $\geq 2$.
\end{cor}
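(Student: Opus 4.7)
The plan is to deduce the corollary from Proposition \ref{combiodd}, which expresses $\omega^{n+1}\in H^{n+1}(Y,\mathbb{Z}_2(n+1))$ as a $\mathbb{Z}_2$-linear combination of $\beta$ and the classes $\omega^{n+1-4k}H^{2k}$ for $1\leq k\leq \lfloor n/4\rfloor$. Since coniveau $\geq 2$ is closed under $\mathbb{Z}_2$-linear combinations, it is enough to verify that each of these summands has coniveau $\geq 2$.

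For the terms $\omega^{n+1-4k}H^{2k}$ with $k\geq 1$, I would use that $H\in H^2(Y,\mathbb{Z}_2(1))$ is the pullback of the hyperplane class under $Y\to \mathbb{P}^n_{\R}$: so $H^{2k}$ vanishes on the complement of the preimage of a codimension $2k$ linear subspace, a closed subset of $Y$ of codimension $2k\geq 2$. By the module structure of cohomology, every product $\omega^j\cdot H^{2k}$ with $k\geq 1$ therefore has coniveau $\geq 2$. When $n\equiv 3[4]$ the definition $\beta=\pi_*\alpha - H^{(n+1)/2}$ reduces the claim for $\beta$ to the claim for $\pi_*\alpha$, since $(n+1)/2\geq 2$ whenever $n\geq 3$ is odd; when $n\equiv 1[4]$ one has $\beta=\pi_*\alpha$ directly.

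The essential step is thus to show that $\pi_*\alpha$ has coniveau $\geq 2$ whenever $\alpha$ does. By hypothesis, $\alpha$ vanishes in $H^{n+1}(Y_{\C}\setminus Z_{\C},\mathbb{Z}_2)$ for some closed $Z_{\C}\subset Y_{\C}$ of codimension $\geq 2$. Replacing $Z_{\C}$ by the $G$-stable subset $Z_{\C}\cup \sigma(Z_{\C})$ (where $\sigma$ generates $G$), which has the same codimension, I may assume that $Z_{\C}$ descends to a closed subset $Z\subset Y$ of codimension $\geq 2$. Naturality of the exact sequence (\ref{rc}) with respect to the open immersion $j:Y\setminus Z\hookrightarrow Y$ yields a commutative square
\begin{equation*}
\xymatrix{
H^{n+1}(Y_{\C},\mathbb{Z}_2)\ar[r]^-{\pi_*}\ar[d]^-{j_{\C}^*} & H^{n+1}(Y,\mathbb{Z}_2(n+1))\ar[d]^-{j^*} \\
H^{n+1}((Y\setminus Z)_{\C},\mathbb{Z}_2)\ar[r]^-{\pi_*} & H^{n+1}(Y\setminus Z,\mathbb{Z}_2(n+1))
}
\end{equation*}
in which $j_{\C}^*\alpha=0$; hence $j^*\pi_*\alpha=0$, so $\pi_*\alpha$ has coniveau $\geq 2$, finishing the argument.

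The only nontrivial point is the $G$-stabilization of $Z_{\C}$, which is elementary but needs to be recorded; everything else is a routine combination of Proposition \ref{combiodd}, naturality of (\ref{rc}), and the codimension of linear sections.
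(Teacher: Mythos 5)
Your proof is correct and follows the same route as the paper: decompose $\omega^{n+1}$ via Proposition \ref{combiodd} and check that each summand ($\beta$, the classes $\omega^{n+1-4k}H^{2k}$, and $H^{\frac{n+1}{2}}$ when it appears) has coniveau $\geq 2$. The paper's own proof is terser --- it leaves implicit both the replacement of the support of $\alpha$ by the $G$-stable closed subset $Z_{\C}\cup\sigma(Z_{\C})$ and the compatibility of $\pi_*$ with restriction to open subsets --- but the details you supply for that step are exactly the right ones.
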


\begin{proof}
The statement follows from Proposition \ref{combiodd}, because the cohomology classes $\omega^{n-3}H^2, \dots, \omega^{n+1-4\lfloor \frac{n}{4}\rfloor}H^{2\lfloor \frac{n}{4}\rfloor}$, as well as $H^{\frac{n+1}{2}}$ when $n\neq 1$, obviously have coniveau $\geq 2$ as multiples of $H^2$. 
\end{proof}

\section{A geometric coniveau computation}\label{s5}

In this section, we fix $n\geq 3$ an odd integer and we take $d:=2n$. 

We work over an algebraically closed field $\C$ of characteristic $0$.
We consider $F\in\C[X_0,\dots,X_n]_d$ a homogeneous degree $d$ polynomial such that $\{F=0\}$ is smooth. Let $Y$ be the double cover of $\mathbb{P}^n_{\C}$ ramified over $\{F=0\}$ defined by the equation $Y:=\{Z^2+F=0\}$, and $H:=\mathcal{O}_{\mathbb{P}^n_{\C}}(1)$.

By Proposition \ref{cohogeo}, $H^{n+1}(Y,\mathbb{Z}_2)$ has a generator $\alpha$ such that $2\alpha=H^{\frac{n+1}{2}}$.
In order to apply Corollary \ref{coniveaulien}, we need to answer positively the following:

\begin{quest}\label{qconiveau}
Does $\alpha$ have coniveau $\geq 2$ ?
\end{quest}

When $n=3$, $Y$ is a sextic double solid, and this is very easy: 

\begin{lem}\label{coniveau3}
If $n=3$, $\alpha$ has coniveau $\geq 2$.
\end{lem}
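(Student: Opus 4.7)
The plan is to represent $\alpha$ as the fundamental class of a curve $L\subset Y$; since $\dim Y = 3$, such a curve has codimension $2$, so its class is automatically of coniveau $\geq 2$.

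From Proposition \ref{cohogeo}, $H^4(Y,\mathbb{Z}_2) = \mathbb{Z}_2\cdot\alpha$ with $2\alpha = H^2$, while $H^6(Y,\mathbb{Z}_2(3))$ is generated by the class $\alpha_3$ of a point, satisfying $2\alpha_3 = H^3$. Intersecting gives $\alpha\cdot H = \alpha_3$, so an irreducible curve $L\subset Y$ has $[L]=\alpha$ exactly when $H\cdot L = 1$, equivalently when $\pi|_L$ is birational onto a line in $\mathbb{P}^3_{\C}$. A sufficient geometric condition for such an $L$ to exist is the presence of a line $\ell\subset\mathbb{P}^3_{\C}$ for which $F|_\ell = g^2$ for some cubic $g\in H^0(\ell,\mathcal{O}(3))$: the preimage $\pi^{-1}(\ell)$ is then cut out by $Z^2+g^2=(Z-ig)(Z+ig)$ and splits into two components, each mapping isomorphically onto $\ell$. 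Even the degenerate case $g=0$, in which $\ell$ lies in the branch sextic $\{F=0\}$, suffices: the doubled class of the reduced preimage equals $\pi^*[\ell]=H^2=2\alpha$ in the torsion-free group $H^4(Y,\mathbb{Z}_2)$, so that the reduced line itself has class $\alpha$.

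The substantive obstacle is to guarantee that such a line exists for \emph{every} smooth $F$: a parameter count in $G(1,3)$ shows that the expected dimension of ``tritangent'' lines is $4-3=1$ (they form the preimage under $\ell\mapsto [F|_\ell]$ of the codimension $3$ Veronese subvariety of squares of cubic forms in $|\mathcal{O}_{\mathbb{P}^1}(6)|\simeq\mathbb{P}^6$), but nonemptiness must be verified uniformly in $F$. I would bypass this issue by invoking Voisin's theorem that the integral Hodge conjecture for $1$-cycles holds on smooth projective rationally connected threefolds. Indeed, $Y$ is Fano with $-K_Y=H$ ample, hence rationally connected; and $H^4(Y,\mathbb{Z})\simeq \mathbb{Z}\cdot\alpha$ is a pure Hodge structure of type $(2,2)$, so $\alpha$ is an integral Hodge class and is represented by an algebraic $1$-cycle. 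A standard spreading-out argument transfers the conclusion from $\mathbb{C}$ to any algebraically closed field of characteristic zero, yielding that $\alpha$ has coniveau $\geq 2$.
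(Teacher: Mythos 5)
Your proof is correct, and the first half (a curve $L$ with $H\cdot L=1$ has class $\alpha$ because $H^4(Y_{\C},\mathbb{Z}_2)=\mathbb{Z}_2\cdot\alpha$ is torsion free with $\alpha\cdot H=[\mathrm{pt}]$, and such a curve arises from a line $\ell\subset\mathbb{P}^3$ on which $F$ restricts to a square) is exactly the paper's argument. Where you diverge is the existence step: the paper settles it by the elementary dimension count of Lemma \ref{varlines} -- the incidence variety of pairs $(\ell,F)$ with $F|_\ell$ a square has a component dominating, hence (by properness of the projection) surjecting onto, the space of sextics, so \emph{every} smooth $Y$ contains a line; alternatively the uniformity worry you raise is already absorbed by Lemma \ref{reducspe}, which reduces Question \ref{qconiveau} to a general $F$ over $\mathbb{C}$, where Lemma \ref{varlines} applies directly. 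You instead invoke Voisin's theorem that the integral Hodge conjecture holds for $1$-cycles on uniruled threefolds, applied to the Fano threefold $Y$ whose $H^4$ is rank one and spanned rationally by the algebraic class $H^2=2\alpha$; this is a valid and more robust argument (it does not require knowing anything about lines on $Y$, and the spreading-out back to arbitrary $\C$ is the same as in Lemma \ref{reducextensions}), but it trades a half-page parameter count for a deep theorem. Both routes are sound; the paper's is the more economical one, and your legitimate concern about nonemptiness for all smooth $F$ does not actually force the detour.
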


\begin{proof}
A dimension count (see for instance Lemma \ref{varlines} below) shows that $Y$ contains a line, that is a curve of degree $1$ against $H$. The cohomology class of such a curve is $\alpha$, so that $\alpha$ is algebraic, hence of coniveau $2$.
\end{proof}

In what follows, we answer positively Question \ref{qconiveau} when $n=5$, following an argument of Voisin. We comment on the $n\geq 7$ case in paragraph \ref{Remarks}.

\begin{prop}\label{coniveau5}
If $n=5$, $\alpha$ has coniveau $\geq 2$.
\end{prop}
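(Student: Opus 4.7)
\medskip

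My plan is to realize $\alpha$ as the class of an explicit algebraic $2$-cycle on the Fermat member of the family and then transfer the resulting coniveau information to an arbitrary smooth $Y$ via a deformation argument.

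For the Fermat double cover $Y^{\dagger} := \{Z^2 + X_0^{10} + \cdots + X_5^{10} = 0\}$, I would fix three tenth roots of $-1$ denoted $\zeta_1, \zeta_2, \zeta_3$ and consider
\[
P := \{X_0 + \zeta_1 X_1 = X_2 + \zeta_2 X_3 = X_4 + \zeta_3 X_5 = Z = 0\} \subset \mathbb{P}(1^6, 5).
\]
Since $\zeta_i^{10} = -1$, the identities $X_{2i}^{10} + X_{2i+1}^{10} = 0$ hold on $P$, so $P \subset Y^{\dagger}$; as $P$ is cut out by four equations in the six-dimensional ambient weighted projective space, it is a $\mathbb{P}^2$. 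The projection $\pi$ restricts to an isomorphism $P \to \pi(P)$ onto the $2$-plane $\pi(P) \subset \{F^{\dagger} = 0\}$ in $\mathbb{P}^5$, so $\pi^{-1}(\pi(P)) = 2P$ as a scheme. Consequently $H^3 = \pi^{*}[\pi(P)] = 2[P]$ in $H^6(Y^{\dagger}, \mathbb{Z}_2)$, which identifies $[P]$ with $\alpha$. In particular, on $Y^{\dagger}$ the class $\alpha$ has coniveau $\geq 3$.

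To transfer this bound to a generic smooth $Y$, I would place $Y$ and $Y^{\dagger}$ in a smooth projective family $\mathcal{Y} \to B$ as in paragraph \ref{parsemialg} and apply a relative Bloch--Srinivas decomposition of the diagonal $\Delta_{\mathcal{Y}/B} \in \CH^5(\mathcal{Y} \times_B \mathcal{Y})$, which is available because the fibers are rationally connected. Although the Fermat $\mathbb{P}^2$'s are rigid and do not deform to nearby fibers as algebraic cycles, the following principle of Voisin applies: the coniveau-$2$ support of $\alpha$ at $Y^{\dagger}$ propagates through the decomposition of the relative diagonal to produce, on each smooth fiber $Y_t$, a closed codimension-$2$ subvariety $Z_t \subset Y_t$ supporting $\alpha_t$, yielding the desired coniveau bound on $Y$.

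The main obstacle is precisely this transfer step. Coniveau is not automatically preserved under specialization, and the Fermat $\mathbb{P}^2$'s do not deform, so one cannot simply transport the cycle. Voisin's key insight is to spread the coniveau-$2$ support (rather than the algebraic $\mathbb{P}^2$'s themselves) using the relative Bloch--Srinivas decomposition, thereby producing codimension-$2$ (but not necessarily codimension-$3$) support on each fiber. The case $n = 5$ is accessible precisely because the Fermat double cover contains $\mathbb{P}^2$'s of class $\alpha$ thanks to the numerical coincidence $d = 2n = 10$; for higher odd $n$ with $d = 2n$, no analogous linear subvariety of the correct class exists in the Fermat double cover, which is why Question \ref{qconiveau} remains open in those cases.
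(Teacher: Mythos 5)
Your first step is correct and is a nice observation: the linear subvariety $P=\{X_0+\zeta_1X_1=X_2+\zeta_2X_3=X_4+\zeta_3X_5=Z=0\}$ does lie on the Fermat double cover, is a $\mathbb{P}^2$, and satisfies $[P]\cdot H^2=1=\alpha\cdot H^2$, so $[P]=\alpha$ and $\alpha$ is algebraic (coniveau $\geq 3$) on $Y^{\dagger}$. The fatal problem is the transfer step, and you have correctly identified where the difficulty sits but not resolved it. Geometric coniveau propagates from the \emph{generic} fiber to \emph{special} fibers (spread out the support and use cospecialization, as in Lemmas \ref{reducextensions} and \ref{reducspe}), never the other way: the Fermat member is highly non-generic, and algebraicity of a class on one special fiber says nothing about nearby fibers (the locus where a monodromy-invariant class becomes algebraic is in general a countable union of proper closed subsets). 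The ``principle of Voisin'' you invoke is not a theorem: the relative Bloch--Srinivas decomposition of the diagonal for a rationally connected fibration only produces coniveau $\geq 1$ on the fibers, which is exactly what Proposition \ref{nn+1} already exploits, and gives no handle on coniveau $2$. A decisive internal check: your explanation of why $n=5$ is special is false, since the same construction (pair up the $n+1$ variables, which is an even number, and set $Z=0$) produces a linear $\mathbb{P}^{(n-1)/2}$ of class $\alpha$ on the Fermat double cover for \emph{every} odd $n$ with $d=2n$. If your transfer principle were valid it would therefore answer Question \ref{qconiveau} affirmatively for all odd $n$, contradicting the fact that the case $n\geq 7$ is left open (see paragraph \ref{Remarks}).

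The actual argument avoids deforming either a cycle or a coniveau bound on $Y$ itself. One degenerates to a special member $Y_0$ (not the Fermat) containing the preimage of a cone over a plane quintic, which produces a curve $\Phi_0$ lying in the smooth locus of the Fano variety of lines $F(Y_0)$; by Ehresmann's theorem the \emph{topological} class $\zeta_t=[\Phi_t]\in H^4(F(Y_t),\mathbb{Z}_2)$ persists on nearby smooth fibers, and the cylinder map gives $q_*p^*\zeta_t=5\alpha$. The point is that $\zeta_t$ has coniveau $\geq 1$ on the threefold $F(Y_t)$ automatically, for degree reasons, so $q_*p^*\zeta_t$ has coniveau $\geq 2$ on $Y_t$; combining the odd multiple $5\alpha$ with the algebraic class $2\alpha=H^3$ then yields the coniveau bound for $\alpha$ itself (Lemma \ref{incidence}). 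That is the mechanism by which information is transported between fibers -- a locally constant cohomology class on an auxiliary parameter space whose coniveau is forced by dimension -- and it is precisely what your proposal is missing.
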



\subsection{Reductions}
The following reductions are standard. We include them because we do not know a convenient reference. 

\begin{lem}\label{reducextensions}
Let $\C\subset\C'$ be an extension of algebraically closed fields.
Then the answer to Question \ref{qconiveau} is positive for $Y$ over $\C$ if and only if it is for $Y_{\C'}$ over $\C'$.
\end{lem}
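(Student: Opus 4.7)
The plan is to treat the two implications separately; one is trivial base change, the other is a spreading-out and specialization argument.

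For the direction $\Rightarrow$: if $W \subset Y$ is a closed subscheme of codimension $\geq 2$ witnessing that $\alpha \in H^{n+1}(Y,\mathbb{Z}_2)$ vanishes on $Y \setminus W$, then $W_{\C'} \subset Y_{\C'}$ is closed of codimension $\geq 2$, and the pullback $\alpha_{\C'}$ vanishes on $Y_{\C'}\setminus W_{\C'}$ by functoriality of cohomology with respect to $Y\setminus W \to Y_{\C'}\setminus W_{\C'}$.

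For the direction $\Leftarrow$: suppose $W' \subset Y_{\C'}$ is closed of codimension $\geq 2$ with $\alpha_{\C'}|_{Y_{\C'}\setminus W'}=0$. Since $W'$ is cut out by finitely many equations, there exist a finitely generated $\C$-subalgebra $A \subset \C'$ and a closed subscheme $\mathcal{W}\subset Y_A := Y\times_{\C}\Spec A$ with $\mathcal{W}\times_A \C' = W'$. Shrinking $\Spec A$ to a dense affine open (keeping $A$ a finitely generated $\C$-domain), we may assume $p\colon \mathcal{U}:=Y_A\setminus\mathcal{W}\to \Spec A$ is smooth with geometrically irreducible fibers, and that the closed fiber at every geometric point has codimension $\geq 2$ in the corresponding fiber of $Y_A$. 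After further shrinking, we may assume that for each $r\geq 1$ the constructible sheaf $R^{n+1}p_*\mathbb{Z}/2^r\mathbb{Z}$ is locally constant on $\Spec A$, so that, by irreducibility of $\Spec A$, a global section is determined by its germ at any one geometric point.

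Now the inclusion $A \hookrightarrow \C'$ induces the geometric generic point $\bar\eta = \Spec \overline{\Frac A}$ of $\Spec A$, and invariance of étale cohomology with finite coefficients under extensions of algebraically closed fields gives
\[
H^{n+1}(\mathcal{U}_{\bar\eta},\mathbb{Z}/2^r\mathbb{Z}) = H^{n+1}(\mathcal{U}_{\C'},\mathbb{Z}/2^r\mathbb{Z}) = H^{n+1}(Y_{\C'}\setminus W',\mathbb{Z}/2^r\mathbb{Z}),
\]
in which the image of $\alpha$ vanishes by hypothesis. By the local constancy arranged above, the class $\alpha|_{\mathcal{U}}$, viewed as a section of $R^{n+1}p_*\mathbb{Z}/2^r\mathbb{Z}$, vanishes at every geometric point of $\Spec A$. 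By the Nullstellensatz, there is a $\C$-point $s\in \Spec A(\C)$; for each $r$ the restriction $\alpha|_{\mathcal{U}_s}$ vanishes in $H^{n+1}(\mathcal{U}_s,\mathbb{Z}/2^r\mathbb{Z})$, and passing to the limit over $r$ gives the vanishing of $\alpha$ in $H^{n+1}(\mathcal{U}_s,\mathbb{Z}_2)$. The closed subscheme $W:=\mathcal{W}_s\subset Y$ then has codimension $\geq 2$ and witnesses $\alpha$ having coniveau $\geq 2$ on $Y$.

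The main subtleties to watch are (a) the passage from finite to $2$-adic coefficients, which here is handled by the fact that $\varprojlim_r$ of vanishing classes still vanishes, and (b) the generic local constancy step, which requires shrinking $\Spec A$ but does not affect the existence of a $\C$-point since $\C$-points remain dense in any dense open of a variety of finite type over $\C$.
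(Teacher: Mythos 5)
Your proposal is correct and follows essentially the same route as the paper: the forward implication by functoriality, and the converse by spreading out $W'$ over a finitely generated $\C$-subalgebra of $\C'$, using (generic) constancy of the cohomology of the fibers along the base — the paper phrases this via cospecialization maps for smooth morphisms from SGA~4$\tfrac{1}{2}$, you via generic local constancy of $R^{n+1}p_*\mathbb{Z}/2^r\mathbb{Z}$ — and then specializing to a $\C$-point. The finite-level-then-inverse-limit handling of the $\mathbb{Z}_2$ coefficients is a harmless technical variant.
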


\begin{proof}
 It is clear that if $\alpha$ has coniveau $\geq 2$ over $\C$, it has coniveau $\geq 2$ over $\C'$. 

Suppose conversely that there is a closed subset $Z\subset Y_{\C'}$ of codimension $\geq 2$ such that $\alpha_{\C'}$ vanishes in $Y_{\C'}\setminus Z$. Taking an extension of finite type of $\C$ over which $Z$ is defined, spreading out and shrinking the base gives a smooth integral variety $B$ over $\C$, and a subvariety $Z_B\subset Y\times B$ of codimension $\geq 2$ in the fibers of $p_2:Y\times B\to B$ such that $p_1^*\alpha$ vanishes in the generic geometric fiber of $p_2:(Y\times B\setminus Z_B)\to B$. The existence of cospecialization maps for smooth morphisms \cite[Arcata V (1.6)]{SGA45} implies that $\alpha$ vanishes in every geometric fiber of $p_2:(Y\times B\setminus Z_B)\to B$. Taking the fiber over a $\C$-point of $B$ shows that $\alpha$ has coniveau $\geq 2$.
\end{proof}

\begin{lem}\label{reducspe}
In order to answer positively Question \ref{qconiveau} in general, it suffices to answer it over the field $\mathbb{C}$ of complex numbers, for a general choice of $F$.
\end{lem}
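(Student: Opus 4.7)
The plan is to prove the lemma in two stages: (a) a Lefschetz-principle reduction from an arbitrary algebraically closed field $\C$ of characteristic $0$ to the case $\C = \mathbb{C}$, and (b) a specialization argument from ``general $F$'' to ``arbitrary $F$''. For stage (a), the polynomial $F$ has finitely many coefficients and is thus defined over a subfield of $\C$ of finite transcendence degree over $\mathbb{Q}$, whose algebraic closure in $\C$ is a countable algebraically closed field $\C_0$; such a $\C_0$ embeds into $\mathbb{C}$. Two applications of Lemma \ref{reducextensions} (to $\C_0 \subset \C$ and to $\C_0 \hookrightarrow \mathbb{C}$) then reduce the problem to $\C = \mathbb{C}$.

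For stage (b), assume that the answer to Question \ref{qconiveau} is positive for $F$ in some Zariski-dense open subset $U \subset V\setminus\Delta$, and fix $F_0 \in (V\setminus\Delta)(\mathbb{C})$. I would choose a smooth integral curve $C$ equipped with a morphism $\rho : C \to V\setminus\Delta$ through $F_0$ and whose generic point lands in $U$ --- for instance, the normalization of a general line through $F_0$ in an affine embedding of $V\setminus\Delta$. Let $p_C : \mathcal{Y}_C \to C$ be the pullback of the universal family, a smooth projective morphism. Applying Lemma \ref{reducextensions} to an embedding $\mathbb{C} \hookrightarrow \overline{\kappa(\eta_C)}$ and the positivity hypothesis at general $\mathbb{C}$-points of $\rho^{-1}(U)$, the class $\alpha$ has coniveau $\geq 2$ on the geometric generic fiber: there is an integral codimension-$2$ closed subvariety $Z^\circ \subset \mathcal{Y}_{\overline{\eta_C}}$ on whose complement $\alpha$ vanishes. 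After replacing $C$ by a finite \'etale cover, $Z^\circ$ descends and spreads out to a closed subscheme $\mathcal{Z}_V \subset \mathcal{Y}_C|_V$ over a dense open $V \subset C$. Its closure $\bar Z \subset \mathcal{Y}_C$ has every irreducible component dominating $C$, so $\bar Z \to C$ is automatically flat (a morphism from a reduced scheme with dominant components to a smooth curve), whence the fiber $\bar Z_{F_0}$ has pure codimension $2$ in $Y_{F_0}$.

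The main obstacle is to propagate the vanishing of $\alpha$ in $H^{n+1}(\mathcal{Y}_b \setminus \bar Z_b, \mathbb{Z}_2)$ from $b = \overline{\eta_C}$ to $b = F_0$. Here $\alpha$ is a global section of the locally constant rank-$1$ sheaf $R^{n+1}p_{C*}\mathbb{Z}_2$ on $C$, uniquely determined stalkwise by the relation $2\alpha = H^{(n+1)/2}$ together with the torsion-freeness of Proposition \ref{cohogeo}. Applying proper base change jointly to $p_C$ and to the proper morphism $\bar Z \to C$, the localization triangle on $\mathcal{Y}_C$ produces a triangle of constructible sheaves on $C$ comparing cohomology with support in $\bar Z$ to $R^{n+1}p_{C*}\mathbb{Z}_2$. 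The subtlety is that specialization on a general constructible sheaf need not be injective, so one cannot conclude directly from vanishing at $\overline{\eta_C}$; instead, I would spread an explicit Gysin decomposition $\alpha = i_*\beta$ on a resolution $\widetilde{\bar Z} \to \bar Z$ into a family over (a finite \'etale cover of) $C$, extend $\beta$ across the special fiber after suitable modifications --- further \'etale covers and blow-ups of $\widetilde{\bar Z}$ to absorb Leray obstructions --- and then restrict the resulting global identity to $F_0$ to produce the desired codimension-$2$ witness for $\alpha_{F_0}$.
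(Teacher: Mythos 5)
Your stage (a) and the first half of stage (b) --- spreading out a codimension-$2$ witness $Z^\circ$ from the geometric generic fiber, taking the closure $\bar Z$, and noting its flatness over the curve --- match the paper's strategy (the paper works over a strictly henselian trait mapping to $V\setminus\Delta$ rather than a global curve, which is only a cosmetic difference). The genuine gap is precisely the step you flag as ``the main obstacle'' and then do not carry out. You assert that one cannot conclude directly from the vanishing of $\alpha$ on $(\mathcal{Y}\setminus\bar Z)_{\overline{\eta_C}}$ because specialization on constructible sheaves need not be injective, and you substitute a programme (``suitable modifications'', ``further \'etale covers and blow-ups to absorb Leray obstructions'') that is never executed. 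In fact one \emph{can} conclude directly, and no injectivity statement is needed: after localizing at $F_0$ one has a smooth (non-proper) morphism $\mathcal{Y}_T\setminus\bar Z\to T$ over a strictly henselian trait, and local acyclicity of smooth morphisms (the cospecialization maps of \cite[Arcata V (1.6)]{SGA45}, already invoked in the proof of Lemma \ref{reducextensions}) shows that the restriction of a class from the total space to the special fiber factors through its restriction to the geometric generic fiber. Since $\alpha$ extends to a class on $\mathcal{Y}_T$ (the family is smooth proper, $R^{n+1}p_*\mathbb{Z}_2$ is constant of rank one and torsion free) and dies on $(\mathcal{Y}_T\setminus\bar Z)_{\bar\eta}$, it dies on $(\mathcal{Y}_T\setminus\bar Z)_{F_0}$. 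This is the whole content of the specialization step.

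Beyond being incomplete, your proposed workaround starts from rewriting ``$\alpha$ has coniveau $\geq 2$'' as a Gysin identity $\alpha=i_*\beta$ with $\widetilde{\bar Z}$ a \emph{smooth} resolution. With $\mathbb{Z}_2$ coefficients this passage is not available: the localization sequence only lifts $\alpha$ to $H^{n+1}_{\bar Z}(Y,\mathbb{Z}_2(n+1))$, and when $\bar Z$ is singular there is no purity isomorphism identifying this group with the cohomology of a resolution --- the image of the Gysin map from a resolution may be strictly smaller (this is the distinction between coniveau and strong coniveau, which is genuinely delicate integrally). A smaller issue: to get positivity at the geometric generic point of $C$ from positivity at general $\mathbb{C}$-points, an embedding $\mathbb{C}\hookrightarrow\overline{\kappa(\eta_C)}$ does not suffice, since the geometric generic fiber is not the base change of any fiber over a $\mathbb{C}$-point; one needs the paper's trick of choosing an abstract isomorphism $\overline{\kappa(\eta_C)}\simeq\mathbb{C}$ carrying the generic polynomial into $U$, and then applying Lemma \ref{reducextensions}.
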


\begin{proof}
Let $U\subset \mathbb{C}[X_0,\dots,X_n]_d$ be a Zariski-open subset of degree $d$ polynomials $F$ as in the hypothesis: for $F\in U(\mathbb{C})$, $\{F=0\}$ is smooth and $\alpha$ is of coniveau $\geq 2$.

  Let $K$ be an algebraic closure of the function field $\mathbb{C}(U)$, let $F_K$ be the generic polynomial and let $Y_K$ be the associated universal double cover. Choosing an isomorphism $K\simeq\mathbb{C}$ such that the induced polynomial $F_{\mathbb{C}}$ belongs to $U$
shows that the answer to Question \ref{qconiveau} is positive for $Y_K$.

  Let us now deal with the case $\C=\mathbb{C}$. It is possible to find the spectrum $T$ of a strictly henselian discrete valuation ring and a morphism $T\to \mathbb{C}[X_0,\dots,X_n]_d$ sending the closed point of $T$ to the polynomial associated to $Y$ and its generic point to the generic point of $U$. Let $Y_T$ be the induced family of double covers. Up to replacing $T$ by a finite extension, there exists a codimension $2$ subset $Z\subset Y_T$ flat over $T$ such that $\alpha$ vanishes in the complement of $Z$ in the generic geometric fiber. Using cospecialization maps again shows that $\alpha$ vanishes in the complement of $Z$ in the special fiber, so that the answer to Question \ref{qconiveau} is positive for $Y$.

  In general, choose an algebraically closed subfield of finite transcendence degree of $\C$ over which $Y$ is defined, embed it in $\mathbb{C}$, and apply Lemma \ref{reducextensions}.
\end{proof}

\subsection{The variety of lines}
We define $F(Y)$ to be the Fano variety of lines of $Y$, that is the Hilbert scheme of $Y$ parametrizing degree $1$ curves in $Y$. We also introduce the universal family $I\subset Y\times F(Y)$ and denote by $q:I\to Y$ and $p:I\to F(Y)$ the natural projections.

  The following lemma is well-known for hypersurfaces \cite[\S 3]{BVdV} or complete intersections \cite[Th\'eor\`eme 2.1]{DebarreManivel}, and the proof in our situation is similar.

\begin{lem}\label{varlines}
If $Y$ is general, $F(Y)$ is smooth, non-empty and of dimension $n-2$.
\end{lem}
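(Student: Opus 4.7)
The plan is to carry out a standard parameter count in the universal family of double covers. Set $V:=H^0(\mathbb{P}^n_\C,\mathcal{O}(2n))$, write $\mathbb{G}:=G(2,n+1)$ for the Grassmannian of lines in $\mathbb{P}^n_\C$ (of dimension $2n-2$), and introduce over $\mathbb{G}$ the vector bundles $\mathcal{E}$, $\mathcal{F}$ whose fibers over $\ell$ are $H^0(\ell,\mathcal{O}_\ell(n))$ and $H^0(\ell,\mathcal{O}_\ell(2n))$, of ranks $n+1$ and $2n+1$. The key observation is that any line $L\subset Y$ satisfies $L\cdot H=1$ and therefore projects isomorphically onto a line $\ell\subset\mathbb{P}^n_\C$; such an $L$ is then determined by the section $Z|_L\in H^0(\ell,\mathcal{O}_\ell(n))$, which must satisfy $(Z|_L)^2+F|_\ell=0$. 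I would form the incidence variety
\[
\tilde{\mathcal{I}}:=\{(F,\ell,G)\in V\times\mathcal{E}:F|_\ell+G^2=0\},
\]
together with the projection $\pi\colon\tilde{\mathcal{I}}\to V$; its fiber over a polynomial $F$ is canonically identified with the Fano variety $F(Y_F)$ of lines in $Y_F=\{Z^2+F=0\}$ (both sheets of $Y_F\to\mathbb{P}^n_\C$ giving two actual lines $L_{\pm G}$).

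The first step is to show that $\tilde{\mathcal{I}}$ is smooth and irreducible of dimension $\dim V+n-2$. It is the zero locus in the smooth variety $V\times\mathcal{E}$ of the section $(F,\ell,G)\mapsto F|_\ell+G^2$ of the pullback of $\mathcal{F}$. The differential of this section in the $V$-direction is the restriction map $V\to H^0(\ell,\mathcal{O}(2n))$, which is surjective, so the section is transverse to the zero section. Hence $\tilde{\mathcal{I}}$ is smooth of codimension $2n+1$ in $V\times\mathcal{E}$, of dimension $(\dim V+3n-1)-(2n+1)=\dim V+n-2$. Irreducibility follows because the other projection $\tilde{\mathcal{I}}\to\mathcal{E}$ realizes $\tilde{\mathcal{I}}$ as an affine bundle of rank $\dim V-(2n+1)$ over the irreducible base $\mathcal{E}$.

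The main content is the dominance of $\pi$, for which it suffices to exhibit a single $(F_0,\ell_0,G_0)\in\tilde{\mathcal{I}}$ with $\{F_0=0\}$ smooth. I would take $\ell_0=\{X_2=\cdots=X_n=0\}$, $G_0=(X_0^n+X_1^n)|_{\ell_0}$, and look for $F_0$ of the form
\[
F_0=-(X_0^n+X_1^n)^2+\sum_{i=2}^n X_iQ_i,\qquad Q_i\in H^0(\mathbb{P}^n,\mathcal{O}(2n-1)),
\]
which automatically satisfies $F_0|_{\ell_0}=-G_0^2$. The base locus of this linear system in the $Q_i$'s is the finite set $\ell_0\cap\{X_0^n+X_1^n=0\}$ of $n$ points; away from it, smoothness of $\{F_0=0\}$ holds for general $Q_i$ by Bertini's theorem, while at each base point $p$ the differential computes to $dF_0|_p=\sum_{i\geq 2}Q_i(p)\,dX_i$, nonzero as soon as some $Q_i(p)\neq 0$, a further Zariski open condition in the $Q_i$'s. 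A general choice of the $Q_i$'s thus produces the required smooth example.

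Given dominance, since $\pi$ is a dominant morphism between smooth irreducible varieties over a field of characteristic zero, generic smoothness yields a dense open $U\subset V$ above which $\pi$ is smooth; for $F\in U$ the fiber $F(Y_F)$ is therefore smooth, and by dimension count it is of pure dimension $\dim\tilde{\mathcal{I}}-\dim V=n-2$, in particular non-empty. The only real hurdle I expect is Step 3: the naive $F_0=-(X_0^n+X_1^n)^2$ is singular along the $n$ ramification points on $\ell_0$, so the perturbation $\sum X_iQ_i$ must be chosen to introduce tangent directions transverse to $\ell_0$ at those points, which the differential computation above arranges.
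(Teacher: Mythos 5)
Your setup is essentially the paper's argument in incidence--variety form (the paper works with the relative Fano scheme $F(Y)_V\to V$ and a normal bundle computation), and most of it is sound: the transversality argument showing $\tilde{\mathcal{I}}$ is smooth irreducible of dimension $\dim V+n-2$, the identification of fibers with Fano schemes, and the generic smoothness conclusion are all fine. The gap is in the dominance step. Exhibiting one point $(F_0,\ell_0,G_0)\in\tilde{\mathcal{I}}$ with $\{F_0=0\}$ smooth does \emph{not} show that $\pi$ is dominant: the image of $\pi$ is a closed irreducible subset of $V$ (the map $\tilde{\mathcal{I}}\to V\times\mathbb{G}$ is finite since squaring $H^0(\ell,\mathcal{O}(n))\to H^0(\ell,\mathcal{O}(2n))$ is a finite morphism, so $\pi$ is proper), and nothing prevents it from being a proper subvariety that happens to meet the complement of the discriminant. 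Being in the image over a smooth $F_0$ and being a point where $\pi$ is \emph{smooth} are different conditions, and only the latter yields dominance (a smooth morphism is open). This is not a formality: in the analogous problem for lines on hypersurfaces of large degree, the incidence variety has positive expected fiber dimension and yet the projection fails to be dominant, so non-emptiness of the general fiber genuinely requires an argument.

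The needed verification is the surjectivity of $d\pi$ at your point. Computing the tangent space of $\tilde{\mathcal{I}}$ at $(F,\ell,G)$ as $\{(F',\ell',G'):F'|_\ell+D_{\ell'}(F|_\ell)+2GG'=0\}$, surjectivity of $d\pi$ is equivalent to $H^0(\ell,\mathcal{O}(2n))$ being spanned by the $\mathcal{O}(1)$-multiples of the $\partial F/\partial X_i|_\ell$ ($i\geq 2$) together with $G\cdot H^0(\ell,\mathcal{O}(n))$ --- exactly the condition $H^1(\Lambda,N_{\Lambda/Y})=0$ that the paper checks via the normal bundle sequence. For your $F_0$ this reads: $\C[X_0,X_1]_{2n}$ is spanned by $X_0Q_i|_{\ell_0},X_1Q_i|_{\ell_0}$ and $(X_0^n+X_1^n)\cdot\C[X_0,X_1]_n$. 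This holds for general $Q_i$ and is easy to arrange, but it is \emph{not} implied by smoothness of $\{F_0=0\}$: taking all $Q_i|_{\ell_0}$ equal to a single form nonvanishing at the roots of $X_0^n+X_1^n$ gives a smooth $F_0$ for which the span has dimension at most $(n+1)+2<2n+1$ once $n\geq 3$. So you must add this spanning check; once you have it, dominance follows and, incidentally, your Bertini computation becomes unnecessary, since for $F$ general in $V$ the hypersurface $\{F=0\}$ is automatically smooth.
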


\begin{proof}
  First, an easy dimension count shows that if $F$ is general, $\{F=0\}$ contains no line \cite[\S 3]{BVdV}. It follows that for such a general $Y$, $F(Y)$ is a double \'etale cover of the variety $G(Y)$ of lines in $\mathbb{P}^n_{\C}$ on which the restriction of $F$ is a square. 

  One introduces the space $V\subset \C[X_0,\dots,X_n]_d$ of degree $d$ polynomials $F$ such that $\{F=0\}$ is smooth.
We consider the universal double cover $Y_V\to V$, and the universal Fano variety of lines $F(Y)_V\to V$, viewed as a double
cover of $G(Y)_V$ that is \'etale generically over $V$. Looking at the natural projection from $G(Y)_V$ to the grassmannian of lines in $\mathbb{P}^n_{\C}$, one sees that $G(Y)_V$ is smooth of dimension $\dim(V)+n-2$. Since we are in characteristic $0$, the general fiber of $G(Y)_V\to V$ is smooth and it remains to show that this morphism is dominant, or that $F(Y)_V\to V$ is dominant. We will do it by finding one point at which it is smooth.

  To do so, we fix a line $L$ with equations $X_2=\dots=X_n=0$, and a degree $d$ polynomial $F$ such that the restriction of $F$ to $L$ is the square of $H\in\C[X_0,X_1]_n$. The double cover $Y$ may be viewed naturally as the zero-locus of a section of $\mathcal{O}(d)$ in the total space $E\to \mathbb{P}^n_{\C}$ of the line bundle $\mathcal{O}_{\mathbb{P}^n_{\C}}(n)$.
The inverse image of $L$ in $Y$ splits into the union of two lines: let $\Lambda$ be one of them. The normal exact sequence $0\to N_{\Lambda/Y}\to N_{\Lambda/E}\to N_{Y/E}|_{\Lambda}\to 0$ reads:
$$0\to N_{\Lambda/Y}\to \mathcal{O}(1)^{\oplus n-1}\oplus\mathcal{O}(n)\to \mathcal{O}(2n)\to 0.$$
The same computation as the one carried out in \cite[\S 2]{BVdV} for hypersurfaces shows that the last arrow is given by $(\frac{\partial F}{\partial X_2},\dots,\frac{\partial F}{\partial X_n}, H)$. Consequently, if $H^0(\Lambda, \mathcal{O}(2n))$ is generated by multiples of $\frac{\partial F}{\partial X_2},\dots,\frac{\partial F}{\partial X_n}$ and $H$, $H^1(\Lambda, N_{\Lambda/Y})=0$ and $\Lambda$ corresponds to a smooth point of the relative Hilbert scheme $F(Y)_V\to V$, as wanted. It is easy to find a polynomial $F$ satisfying this condition.
\end{proof}

\begin{lem}\label{incidence}
  If there exists a smooth $Y$ over $\mathbb{C}$ such that $F(Y)$ is smooth of dimension $n-2$, and a cohomology class $\zeta\in H^{n-1}(F(Y),\mathbb{Z}_2)$ such that $q_*p^*\zeta$ is an odd multiple of $\alpha$, then Question \ref{qconiveau} has a positive answer.
\end{lem}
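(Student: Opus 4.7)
The plan is to trace the Bloch--Ogus coniveau filtration through the cylinder correspondence $q_*p^*$. The crucial observation is that every $\zeta\in H^{n-1}(F(Y),\mathbb{Z}_2)$ automatically has coniveau $\geq 1$ on $F(Y)$ for dimensional reasons, and the geometry of the incidence diagram then bumps this up to coniveau $\geq 2$ on $Y$, from which the odd multiplicity finishes the argument.

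First, I would verify that $\zeta$ has coniveau $\geq 1$ on $F(Y)$. Since $F(Y)$ is smooth of dimension $n-2$ over $\mathbb{C}$, any smooth affine open $U\subset F(Y)$ satisfies $H^{n-1}(U,\mathbb{Z}/2^r\mathbb{Z})=0$ by \cite[XIV Corollaire 3.2]{SGA43}, and hence $H^{n-1}(U,\mathbb{Z}_2)=0$ since the finite groups $H^{n-2}(U,\mathbb{Z}/2^r\mathbb{Z})$ contribute no $\varprojlim^1$. Taking the direct limit over affine open neighbourhoods of the generic point then yields $H^{n-1}_{\to}(\eta_{F(Y)},\mathbb{Z}_2)=0$, so $\zeta$ vanishes on some dense affine open $V\subset F(Y)$ and is supported on $D:=F(Y)\setminus V$, a closed subset of dimension $\leq n-3$.

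Next, I would propagate this support through the incidence diagram. Since $p\colon I\to F(Y)$ is a smooth $\mathbb{P}^1$-bundle, $p^*\zeta$ vanishes on $p^{-1}(V)$ and is supported on $p^{-1}(D)\subset I$, of dimension $\leq n-2$. The morphism $q\colon I\to Y$ is proper, being the composition of the closed immersion $I\hookrightarrow Y\times F(Y)$ with the proper projection, so by proper base change $q_*p^*\zeta$ vanishes on the complement of the closed subset $q(p^{-1}(D))\subset Y$, which has codimension $\geq 2$. Therefore $q_*p^*\zeta\in N^2 H^{n+1}(Y,\mathbb{Z}_2)$; and since by hypothesis $q_*p^*\zeta=c\alpha$ with $c$ odd, hence a unit in $\mathbb{Z}_2$, we conclude $\alpha=c^{-1}(q_*p^*\zeta)\in N^2 H^{n+1}(Y,\mathbb{Z}_2)$: the class $\alpha$ has coniveau $\geq 2$ on the given $Y$.

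To transfer this positive answer to the general $Y$ needed for Question \ref{qconiveau}, I would combine the above with Lemmas \ref{reducextensions} and \ref{reducspe}: the condition that $F(Y)$ be smooth of dimension $n-2$ is Zariski-open on the space of smooth degree-$d$ hypersurfaces by Lemma \ref{varlines}, so the $Y$ in the hypothesis may be arranged to represent a general $F$ over $\mathbb{C}$, after which those reduction lemmas spread the conclusion to arbitrary smooth $F$ over any algebraically closed field of characteristic zero. The only step requiring any delicacy is the proper base-change/support-tracking through $q$; everything else is essentially bookkeeping, once one notices the free gain in coniveau coming from the dimensional inequality $\dim F(Y)=n-2<n-1=\deg\zeta$.
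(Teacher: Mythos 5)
Your core computation is correct and is essentially the paper's: $\zeta$ has coniveau $\geq 1$ on $F(Y)$ by Artin vanishing on affine opens, $p^{-1}(D)$ has dimension $\leq n-2$, and properness of $q$ together with the compatibility of the Gysin map with restriction to open subsets gives $q_*p^*\zeta$ coniveau $\geq 2$; inverting the odd integer $c$ in $\mathbb{Z}_2$ then yields that $\alpha$ has coniveau $\geq 2$ \emph{for the particular $Y$ furnished by the hypothesis}. (The paper instead writes $\alpha$ as a combination of $c\alpha$ and the algebraic class $2\alpha=H^{\frac{n+1}{2}}$; your use of the fact that $c$ is a unit in $\mathbb{Z}_2$ is equally valid and marginally cleaner.)

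The gap is in your last paragraph. Lemma \ref{reducspe} requires the coniveau bound for all $F$ in a dense Zariski-open subset of $\mathbb{C}[X_0,\dots,X_n]_d$: its proof specializes from the geometric generic fibre to special fibres, so knowing the bound at a single member is not enough --- coniveau can perfectly well be larger on a proper closed subset of the parameter space than at the generic point. Your argument only produces the bound for the one $Y$ on which $\zeta$ is assumed to exist, and saying that this $Y$ ``may be arranged to represent a general $F$'' begs the question, since the hypothesis hands you a fixed $Y$ and you have no $\zeta$ on the others. The missing ingredient is the observation, via Ehresmann's theorem applied to the family of pairs $(Y,F(Y))$ over the connected Zariski-open locus where both are smooth, that the existence of a class $\zeta\in H^{n-1}(F(Y),\mathbb{Z}_2)$ with $q_*p^*\zeta$ an odd multiple of $\alpha$ is a purely topological condition, hence holds for every member of that locus once it holds for one. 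With that in hand, your coniveau computation applies to every such $Y$, and Lemmas \ref{reducspe} and \ref{reducextensions} finish the proof as you describe.
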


\begin{proof}
  By Lemma \ref{reducspe} and Lemma \ref{varlines}, it suffices to consider a double cover over the complex numbers whose variety of lines is smooth of dimension $n-2$. By Ehresmann's theorem, the existence of a cohomology class $\zeta$ as in our hypothesis does not depend on $Y$ (as long as $Y$ and $F(Y)$ are smooth). Consequently, it suffices to answer Question \ref{qconiveau} for a double cover $Y$ for which such a $\zeta$ exists.

  On the one hand $2\alpha=H^{\frac{n+1}{2}}$ is algebraic, hence of coniveau $\geq 2$. On the other hand $\zeta$ has coniveau $\geq 1$ because it vanishes on any affine open of $F(Y)$. It follows that $q_*p^*\zeta$ has coniveau $\geq 2$ because $q:I\to Y$ is not dominant by dimension. Combining these two assertions, we see that $\alpha$ has coniveau $\geq 2$.
\end{proof}

\subsection{A degeneration argument}\label{degeneration}

In this paragraph, we set $n=5$ and $d=10$, and we prove Proposition \ref{coniveau5} by checking the hypothesis of Lemma \ref{incidence}.

To do so, we choose four homogeneous polynomials $P\in\mathbb{C}[X_0,\dots,X_5]_5$, $G\in\mathbb{C}[X_1,X_2,X_3]_5$,  and $Q_1,Q_2\in \mathbb{C}[X_0,\dots,X_5]_9$, and we set
$$F_0:=X_0^{10}+PG+Q_1X_4+Q_2X_5\in \mathbb{C}[X_0,\dots,X_5]_{10}.$$
The reason for this choice is that $F_0$ restricts to a square on the cone $\Gamma:=\{G=X_4=X_5=0\}$, so that the inverse image of $\Gamma$ in $Y_0:=\{Z^2+F_0=0\}$ has two irreducible components, giving rise to two $1$-dimensional families of lines in $Y_0$. We denote by $\Phi_0\subset F(Y_0)$ the curve corresponding to one of these families: it is naturally isomorphic to $\{G=0\}\subset\mathbb{P}^2_{\mathbb{C}}$.

\begin{lem}\label{equationchoice}
If $P,G,Q_1,Q_2$ have been chosen general, $Y_0$ is smooth, $\Phi_0$ is smooth and $F(Y_0)$ is smooth of dimension $3$ along $\Phi_0$.
\end{lem}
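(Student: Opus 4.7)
The plan is to verify the three smoothness statements in turn. For $Y_0$: since $\partial_Z(Z^2+F_0)=2Z$, it suffices to prove smoothness of $\{F_0=0\}\subset\mathbb{P}^5_{\mathbb{C}}$; the family $X_0^{10}+PG+Q_1X_4+Q_2X_5$ has base locus contained in $\{X_0=X_4=X_5=0,\ G=0\}$, so by Bertini a general member is smooth away from this locus, while a direct inspection of $\nabla F_0$ at a base point (whose non-trivial entries are the values of $P\cdot\nabla G$, $Q_1$, $Q_2$ there) shows that, generically in $(P,Q_1,Q_2)$, these do not simultaneously vanish at any point of $\{G=0\}$. For $\Phi_0\simeq\{G=0\}\subset\mathbb{P}^2_{X_1,X_2,X_3}$: Bertini applied to $|\mathcal{O}_{\mathbb{P}^2}(5)|$ gives smoothness of $\{G=0\}$ for generic $G$.

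For $F(Y_0)$, I fix $\Lambda\in\Phi_0$ lying above the ruling of $\Gamma$ joining $v=[1:0:0:0:0:0]$ to $[0:a_1:a_2:a_3:0:0]$ (so $G(a)=0$), parametrize $\Lambda$ by $[s:t]$ with $X_0=s$, $X_i=ta_i$ for $i=1,2,3$, $X_4=X_5=0$, $Z=\pm is^5$, and view $Y_0$ as a divisor of degree $10$ in the total space $E$ of $\mathcal{O}_{\mathbb{P}^5_{\mathbb{C}}}(5)$, as in Lemma~\ref{varlines}. The normal exact sequence
\[0\to N_{\Lambda/Y_0}\to\mathcal{O}_\Lambda(1)^{\oplus 4}\oplus\mathcal{O}_\Lambda(5)\to\mathcal{O}_\Lambda(10)\to 0\]
has $\chi(N_{\Lambda/Y_0})=3$, so smoothness of $F(Y_0)$ of dimension $3$ at $[\Lambda]$ follows from surjectivity of the rightmost map on $H^0$. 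Computing partial derivatives of $Z^2+F_0$ in the five normal directions yields $\pm 2is^5$ (from $\partial_Z$), $Q_1|_\Lambda$ and $Q_2|_\Lambda$ (from $\partial_{X_4}$, $\partial_{X_5}$), and $(\nabla G(a)\cdot\vec b)\,t^4\,P|_\Lambda$ for $\vec b\in\vec a^\perp\subset\mathbb{C}^3$; the choice $\vec b\parallel\nabla G(a)$ yields a nonzero multiple of $t^4P|_\Lambda$, using smoothness of $\{G=0\}$ at $a$ and Euler's identity $\nabla G(a)\cdot\vec a=5G(a)=0$. Surjectivity thus reduces to the statement that $s^5\cdot H^0(\mathcal{O}_\Lambda(5))$ together with the three $2$-dimensional subspaces $H^0(\mathcal{O}_\Lambda(1))\cdot t^4P|_\Lambda$, $H^0(\mathcal{O}_\Lambda(1))\cdot Q_1|_\Lambda$ and $H^0(\mathcal{O}_\Lambda(1))\cdot Q_2|_\Lambda$ span the $11$-dimensional space $H^0(\mathcal{O}_\Lambda(10))$.

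The main obstacle is arranging this surjectivity simultaneously for every $\Lambda\in\Phi_0$, i.e., for every $\vec a\in\{G=0\}$. Passing to the $5$-dimensional quotient $H^0(\mathcal{O}_\Lambda(10))/s^5H^0(\mathcal{O}_\Lambda(5))$ (with basis $t^{10},\,st^9,\,s^2t^8,\,s^3t^7,\,s^4t^6$), surjectivity becomes the rank-$5$ condition on a $5\times 6$ matrix whose entries are polynomials in $\vec a$ and in the coefficients of $P,Q_1,Q_2$. Since non-surjectivity is a closed subvariety of the incidence space $\{(P,G,Q_1,Q_2,\vec a):G(a)=0\}$, it is enough to exhibit one quadruple for which the rank is $5$ at every $\vec a\in\{G=0\}$; the set of such quadruples will then be Zariski open. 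I would achieve this by first specializing $Q_1,Q_2$ so that their restrictions to lines through $v$ are sufficiently generic degree-$9$ polynomials (easily arranged by taking $Q_1,Q_2$ to be short generic sums of monomials involving $X_0,\ldots,X_3$), and then choosing $P$ so that the residual rank-drop locus inside $\{G=0\}$ is cut out by a single polynomial equation in the coefficients of $P$ which can be made nonzero by a further generic perturbation.
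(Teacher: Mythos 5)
Your treatment of the smoothness of $Y_0$ and of $\Phi_0$, and your reduction of the smoothness of $F(Y_0)$ along $\Phi_0$ to the surjectivity of $H^0(\mathcal{O}_\Lambda(1)^{\oplus 4}\oplus\mathcal{O}_\Lambda(5))\to H^0(\mathcal{O}_\Lambda(10))$, with image $s^5H^0(\mathcal{O}_\Lambda(5))+t^4P|_\Lambda H^0(\mathcal{O}_\Lambda(1))+Q_1|_\Lambda H^0(\mathcal{O}_\Lambda(1))+Q_2|_\Lambda H^0(\mathcal{O}_\Lambda(1))$, all match the paper's argument (which quotes the normal bundle computation from the proof of Lemma~\ref{varlines}). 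Your observation that it suffices, by properness of the projection from the incidence variety, to exhibit a single good quadruple is also a correct logical framework, equivalent to the paper's "generic choice works".

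The gap is that you never actually establish that a good quadruple exists: the last paragraph is a plan ("I would achieve this by\dots"), not a proof, and its two key assertions are exactly where the difficulty lies. First, it is not clear in what sense a fixed $Q_1,Q_2$ can have "sufficiently generic" restrictions to \emph{every} line $\Lambda_a$, $a\in\{G=0\}$, simultaneously; these restrictions vary in an algebraic family and cannot all be generic. Second, the claim that the residual rank-drop locus is cut out by a single nonzero polynomial in the coefficients of $P$ presupposes that the bad set of $(P,Q_1,Q_2,a)$ does not dominate the parameter space, which is the whole point. What is needed is a codimension estimate at a \emph{fixed} line: the locus of $(P,Q_1,Q_2)$ for which the six sections fail to span has codimension at least $2$ (not merely $1$), so that its union over the one-dimensional family $\Phi_0$ is still a proper closed subset. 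The paper proves this codimension-$2$ bound by restricting everything to $\Lambda\simeq\mathbb{P}^1$ (so the parameters become $P\in\mathbb{C}[X_0,X_1]_5$, $Q_1,Q_2\in\mathbb{C}[X_0,X_1]_9$) and exhibiting a complete curve in the projectivized parameter space avoiding the bad locus, which forces that locus to contain no divisor. Some such argument (or an explicit quadruple verified to work at every $a\in\{G=0\}$) is indispensable, and your proposal does not supply it.
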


\begin{proof}
  To check that the general zero-locus of such an $F_0$ is smooth, it suffices to deal with equations of the form $\lambda X_0^{10}+\mu X_1^{10}+Q_1X_4+Q_2X_5\in \mathbb{C}[X_0,\dots,X_5]_{10}$. These form a linear system, so a general one among these is smooth outside of the base locus $\{X_0=X_1=X_4=X_5=0\}$ by Bertini theorem. But there exists a particular one that is smooth on the base locus: take $Q_1=X_2^9$ and $Q_2=X_3^9$. It follows that the general one is smooth everywhere.

 To conclude, fix $G$ such that $\Phi_0\simeq\{G=0\}\subset\mathbb{P}^2_{\mathbb{C}}$ is smooth. It suffices to prove that for every $\Lambda\in \Phi_0$, $F(Y_0)$ is smooth of dimension $3$ at $\Lambda$, with possible exceptions on a codimension $2$ subset of the parameter space for $P,Q_1$ and $Q_2$.
By the computations in the proof of Lemma \ref{varlines}, we need to show that, outside such a subset, $H^0(\Lambda,\mathcal{O}(10))$ is generated by multiples of $\frac{\partial F_0}{\partial X_2},\dots,\frac{\partial F_0}{\partial X_5}$ and $X_0^5$. 

This amounts to showing that, outside of a codimension $2$ subset of the parameter space for $P\in\mathbb{C}[X_0,X_1]_5$ and $Q_1,Q_2\in \mathbb{C}[X_0,X_1]_9$, $H^0(\mathbb{P}^1,\mathcal{O}(10))$ is generated by multiples of $X_0^5, PX_1^4,Q_1$ and $Q_2$. This is easy to see, by exhibiting a complete curve in the projectivized parameter space avoiding the bad locus.
\end{proof}


Now, let $\Delta$ be a small enough disk in $\mathbb{C}[X_0,\dots,X_5]_{10}$ centered around the polynomial $F_0$ given by Lemma \ref{equationchoice}. Let $Y_{\Delta}$ be the family of double covers over it, and $F(Y)_{\Delta}$ the corresponding family of varieties of lines. 

Recall that $\Phi_0$ is a smooth proper subvariety of the smooth locus of the special fiber $F(Y_0)$. Using the flow of a vector field as in the
proof of Ehresmann's theorem, one sees that $\Phi_0$ deforms (as a differentiable submanifold) to nearby fibers for which $F(Y_t)$ is smooth,
giving rise to a cohomology class $\zeta_t=[\Phi_t]\in H^{n-1}(F(Y_t),\mathbb{Z}_2)$. We compute $q_*p^*\zeta_t=q_*[p^{-1}(\Phi_t)]=q_*[p^{-1}(\Phi_0)]$: it is the cycle class of the cone $\Gamma$, that is equal to $5\alpha$.

\subsection{Remarks}\label{Remarks}

When $n\geq 7$, an argument analogous to that of paragraph \ref{degeneration} fails, because one gets a double cover $Y_0$ whose variety of lines is singular along a codimension $2$ subset of the subvariety $\Phi_0$ that we would like to deform to nearby fibers $F(Y_t)$.

It might still be possible to show, by another argument, the existence of a cohomology class $\zeta$ allowing to apply Lemma \ref{incidence}. To do so, one would need to compute part of the integral cohomology of $F(Y)$. The rational cohomology of $F(Y)$ in the required degree is well understood thanks to \cite[Th\'eor\`eme 3.4]{DebarreManivel} (where the computations are carried out in the analogous setting of hypersurfaces or complete intersections). However, Debarre and Manivel's approach, relying on the Hodge decomposition, does not allow to control the integral cohomology groups of $F(Y)$.

\section{Proof of the main theorem}\label{s6}

We now come back to our main goal: the proof of Theorem \ref{main}.

\subsection{The generic case}

Let us first put together what we have obtained so far. Fix $n\geq 2$. Define $d(n)$ by setting $d(n):=2n$ if $n$ is even or equal to $3$ or $5$ and $d(n):=2n-2$ if $n\geq 7$ is odd.

\begin{prop}\label{proofgeneric}
Let $f\in \R[X_1,\dots,X_n]$ be a positive semidefinite polynomial of degree $d(n)$ whose homogenization $F$ defines a smooth hypersurface in $\mathbb{P}^n_{\R}$. Then $f$ is a sum of $2^n-1$ squares in $\R(X_1,\dots,X_n)$.
\end{prop}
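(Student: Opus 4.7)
The plan is to chain together the equivalences of Sections \ref{s3} and \ref{s4}, which have been designed precisely so that the verification for a smooth $Y$ becomes a formal combination of the statements proven so far. Since $\{F=0\}$ is smooth, so is the double cover $Y=\{Z^2+F=0\}$, and by Lemma \ref{rev} it is integral with $Y(\R)=\varnothing$. The variety $Y$ is moreover projective, which together with the smoothness puts us in position to apply Propositions \ref{levelnr} and \ref{nn+1} directly to $X=Y$.

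First, by Proposition \ref{sumlevel}, the conclusion is equivalent to the statement that $\R(Y)$ has level $<2^{n}$. Applying the equivalence (i)$\Leftrightarrow$(iii) of Proposition \ref{levelnr} to $Y$, this reduces to showing that the class $\omega^n\in H^n(Y,\mathbb{Z}_2(n))$ has coniveau $\geq 1$. Next, we invoke the converse direction of Proposition \ref{nn+1}; this requires that $\CH_0(Y_{\C})$ be supported on a subvariety of dimension $n-1$. Since $d=d(n)\leq 2n$, a direct computation gives $K_Y=\phi^*\mathcal{O}_{\mathbb{P}^n_{\R}}(d/2-n-1)$ with $d/2-n-1<0$, so $Y$ is Fano and in particular rationally connected over $\C$, hence $\CH_0(Y_{\C})=\mathbb{Z}$ is supported on a point. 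Consequently, it suffices to prove that $\omega^{n+1}\in H^{n+1}(Y,\mathbb{Z}_2(n+1))$ has coniveau $\geq 2$.

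The last step splits according to the congruence of $d(n)$ modulo $4$. If $n$ is even, $d(n)=2n\equiv 0\,[4]$; if $n\geq 7$ is odd, $d(n)=2n-2=2(n-1)\equiv 0\,[4]$ as well. In both of these situations, Proposition \ref{vanisheven2} yields that $\omega^{n+1}=0$ in $H^{n+1}(Y,\mathbb{Z}_2(n+1))$, from which coniveau $\geq 2$ is immediate. There remain the cases $n\in\{3,5\}$ with $d(n)=2n$, where $d(n)\not\equiv 0\,[4]$ and Proposition \ref{vanisheven2} does not apply. Here we use Lemma \ref{coniveau3} (for $n=3$) or Proposition \ref{coniveau5} (for $n=5$) to conclude that the generator $\alpha$ of $H^{n+1}(Y_{\C},\mathbb{Z}_2)$ has coniveau $\geq 2$; Corollary \ref{coniveaulien} then transports this conclusion to $\omega^{n+1}\in H^{n+1}(Y,\mathbb{Z}_2(n+1))$, finishing the proof.

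The main conceptual obstacle has already been overcome in the preceding sections: it is the passage from the abstract cohomological condition (coniveau of $\omega^n$) to the geometrically verifiable condition (coniveau of $\omega^{n+1}$) supplied by Proposition \ref{nn+1}, together with the coniveau estimates of Sections \ref{s4} and \ref{s5}. In the present ``generic'' situation where $Y$ is already smooth, no further input is needed; the difficulty of extending the result to arbitrary positive semidefinite polynomials (where $Y$ may be singular and we lack control of a resolution) is addressed by the specialization argument deferred to the remainder of Section \ref{s6}.
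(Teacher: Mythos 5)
Your proposal is correct and follows exactly the paper's own argument: reduce to the level of $\R(Y)$ via Proposition \ref{sumlevel}, then to the coniveau of $\omega^n$ via Proposition \ref{levelnr}, then to the coniveau of $\omega^{n+1}$ via the converse direction of Proposition \ref{nn+1} (justified by $Y_{\C}$ being Fano, hence rationally connected), and finally conclude by Proposition \ref{vanisheven2} when $d(n)\equiv 0\,[4]$ and by Corollary \ref{coniveaulien} combined with Lemma \ref{coniveau3} or Proposition \ref{coniveau5} when $n=3$ or $5$. The congruence check $d(n)\equiv 0\,[4]$ for $n$ even or $n\geq 7$ odd and the anticanonical computation are both as in the paper.
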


\begin{proof}
We consider the double cover $Y$ of $\mathbb{P}^n_{\R}$ ramified over $\{F=0\}$ defined by the equation $Y:=\{Z^2+F=0\}$. The variety $Y$ is smooth, $Y(\R)=\varnothing$ by Lemma \ref{rev}, and computing that the anticanonical bundle $-K_Y=\mathcal{O}_{\mathbb{P}^n_{\R}}(n+1-\frac{d(n)}{2})$ of $Y$ is ample, one sees that $Y_{\C}$ is Fano, hence rationally connected. 

By Proposition \ref{sumlevel}, we need to show that the level of $\R(Y)$ is $<2^n$. Applying Proposition \ref{levelnr} (iii)$\Rightarrow$(i), we have to prove that $\omega^n\in H^n(Y,\Z_2(n))$ has coniveau $\geq 1$. Finally, since $Y_{\C}$ is rationally connected, the converse Proposition \ref{nn+1} (ii)$\Rightarrow$(i) holds: we only have to check that $\omega^{n+1}\in H^{n+1}(Y,\mathbb{Z}_2(n+1))$ has coniveau $\geq 2$.

When $n\neq 3$ or $5$, $d(n)\equiv 0[4]$ so that $\omega^{n+1}\in H^{n+1}(Y,\mathbb{Z}_2(n+1))$ vanishes by Proposition \ref{vanisheven2}.

When $n=3$ or $5$, $\omega^{n+1}\in H^{n+1}(Y,\mathbb{Z}_2(n+1))$ is seen to be of coniveau $\geq 2$ by combining Corollary \ref{coniveaulien} and either Lemma \ref{coniveau3} when $n=3$ or Proposition \ref{coniveau5} when $n=5$.
\end{proof}

\subsection{A specialization argument}

  We do not know how to deal with singular equations using the same arguments because one has too little control on the geometry of (a resolution of singularities of) the variety $Y$. Instead, we rely on a specialization argument, that will also take care of the lower values of the degree.

\begin{thm}[Theorem \ref{main}]
Let $f\in \R[X_1,\dots,X_n]$ be a positive semidefinite polynomial of degree $\leq d(n)$. Then $f$ is a sum of $2^n-1$ squares in $\R(X_1,\dots,X_n)$.
\end{thm}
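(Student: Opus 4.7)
\emph{The plan} is to reduce the statement to Proposition \ref{proofgeneric} by a deformation over a non-archimedean real closed extension of $\R$, followed by a specialization. Assume $f\neq 0$ (the case $f=0$ is trivial). Let $\R' := \R\langle t\rangle$ be the real closure of $\R(t)$ for the ordering making $t$ a positive infinitesimal over $\R$, and let $\lambda : \mathcal{O} \to \R \cup \{\infty\}$ be the place fixing $\R$ and sending $t$ to $0$, where $\mathcal{O}\subset \R'$ is its valuation ring. Fix once and for all the Fermat polynomial $h := 1 + X_1^{d(n)} + \cdots + X_n^{d(n)} \in \R[X_1,\ldots,X_n]$, whose homogenization $H = X_0^{d(n)} + \cdots + X_n^{d(n)}$ is positive definite and defines a smooth hypersurface in $\mathbb{P}^n_\R$. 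I would set $f_t := f + th \in \R'[X_1,\ldots,X_n]$: this is positive semidefinite over $\R'$ (as $h > 0$ and $t > 0$) and has degree exactly $d(n)$, its leading form being $t(X_1^{d(n)}+\cdots+X_n^{d(n)})$.

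Next, I would verify that the homogenization $F_t = X_0^{d(n)-d} F + tH$ of $f_t$ cuts out a smooth hypersurface in $\mathbb{P}^n_{\R'}$, so that Proposition \ref{proofgeneric} applies. The family $\{X_0^{d(n)-d} F + sH = 0\}_{s\in\mathbb{A}^1_\R}$ has smooth fiber at $s=\infty$ (where it degenerates to $\{H=0\}$, which is smooth), hence its discriminantal locus is a proper closed subscheme of $\mathbb{A}^1_\R$ cut out by a nonzero polynomial $\delta\in\R[s]$; since $t$ is transcendental over $\R$, $\delta(t)\neq 0$, confirming smoothness. Proposition \ref{proofgeneric} then produces, after clearing denominators in a sum-of-squares representation, an identity
\begin{equation*}
q^2 f_t = \sum_{i=1}^{2^n-1} p_i^2 \quad\text{in }\R'[X_1,\ldots,X_n],
\end{equation*}
with $q, p_i \in \R'[X_1,\ldots,X_n]$ and $q\neq 0$.

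The final step is to specialize this identity to $\R$. After multiplying $q$ and all the $p_i$ by a suitable element of $\R'^{\times}$, I may assume their coefficients all lie in $\mathcal{O}$, with at least one having nonzero image under $\lambda$. Applying $\lambda$ coefficientwise and using $\lambda(f_t) = f + \lambda(t)h = f$ yields
\begin{equation*}
\bar{q}^2 f = \sum_{i=1}^{2^n-1} \bar{p}_i^2 \quad\text{in }\R[X_1,\ldots,X_n].
\end{equation*}
\emph{The main obstacle}, and really the only subtle point, is to guarantee that $\bar{q}\neq 0$, so as to obtain a genuine sum-of-squares decomposition of $f$. If it were zero, then $\sum \bar{p}_i^2 = 0$ in $\R[X_1,\ldots,X_n]$, and since $\R(X_1,\ldots,X_n)$ is formally real by Proposition \ref{realff} (the origin is a smooth $\R$-point), every $\bar{p}_i$ would vanish, contradicting the normalization. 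Dividing through by $\bar{q}^2$ then expresses $f$ as a sum of $2^n-1$ squares in $\R(X_1,\ldots,X_n)$, as required.
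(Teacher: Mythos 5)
Your argument is correct and is essentially the paper's own proof: perturb $f$ to $f+t\,(1+\sum_i X_i^{d(n)})$ over a real closed extension of $\R$ in which $t$ is a positive infinitesimal, check that the homogenization is smooth so that Proposition \ref{proofgeneric} applies there, and specialize back along the $t$-adic place, with formal reality of $\R(X_1,\dots,X_n)$ ruling out degeneration of the identity. The one point you should make explicit is why $f$ itself is still positive semidefinite on $(\R')^n$ — your parenthetical ``$h>0$ and $t>0$'' only handles the $th$ term; the paper settles this by Artin's theorem ($f$ is a sum of squares of rational functions over $\R$, hence over $\R'$), and Tarski's transfer principle would do equally well. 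Your packaging of the specialization (clear denominators to a polynomial identity and reduce coefficientwise) is a mild, slightly more elementary variant of the paper's, which instead extends the $t$-adic valuation to the whole function field $\R'(X_1,\dots,X_n)$ and reduces the rational-function identity directly; both hinge on exactly the same non-degeneration argument.
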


\begin{proof}
Consider $g:=f+t(1+\sum_{i=1}^nX_i^{d(n)})\in \R(t)[X_1,\dots,X_n]$. It is a degree $d(n)$ polynomial whose homogenization defines a smooth hypersurface in $\mathbb{P}^n_{\R}$ because so does its specialization $1+\sum_{i=1}^nX_i^{d(n)}$. Let $\s:=\cup_r\R((t^{1/r}))$ be a real closed extension of $\R(t)$. By Artin's solution to Hilbert's 17\textsuperscript{th} problem \cite{Artin17}, $f$ is a sum of squares in $\R(X_1,\dots,X_n)$, hence still a positive semidefinite polynomial viewed in $\s[X_1,\dots,X_n]$. Consequently, since $t=(t^{1/2})^2$ is a square in $\s$, $g\in \s[X_1,\dots,X_n]$ is a positive semidefinite polynomial. Applying Proposition \ref{proofgeneric} over the real closed field $\s$, we see that $g$ is a sum of $2^n-1$ squares in $\s(X_1,\dots,X_n)$: one has $g=\sum_{i=1}^{2^n-1} h_i^2$.

Consider the $t$-adic valuation on $\s$.
Applying $n$ times successively \cite[Chapitre VI \S10, Proposition 2]{BourbakiCA}, we can extend it to a valuation $v$ on $\s(X_1,\dots,X_n)$ that is trivial on $\R(X_1,\dots,X_n)$, and whose residue field is isomorphic to $\R(X_1,\dots,X_n)$. Note that these choices imply that $v(g)=0$ and that the reduction of $g$ modulo $v$ is $f\in \R(X_1,\dots,X_n)$.

Define $m:=\inf_i v(h_i)$ and notice that $m\leq 0$ because $v(g)=0$. 
Suppose for contradiction that $m<0$ and let $j$ be such that $v(h_j)=m$.
Then it is possible to reduce the equality $gh_j^{-2}=\sum_{i=1}^{2^n-1} (h_ih_j^{-1})^2$ modulo $v$. This is absurd because we get a non-trivial sum of squares that is zero in $\R(X_1,\dots,X_n)$. This shows that $m=0$. Consequently, it is possible to reduce the equality $g=\sum_{i=1}^{2^n-1} h_i^2$ modulo $v$, showing that $f$ is a sum of $2^n-1$ squares in $\R(X_1,\dots,X_n)$ as wanted.
\end{proof}

We conclude by stating explicitely the following consequence of our proof:

\begin{prop}\label{conditional}
If $n\geq 3$ is odd and if Question \ref{qconiveau} has a positive answer, then Theorem \ref{main} also holds in $n$ variables and degree $d=2n$.
\end{prop}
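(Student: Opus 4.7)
The plan is to add one more case to the two-step structure of Section \ref{s6}, namely $d=2n$ for $n \geq 3$ odd. The first step is the analog of Proposition \ref{proofgeneric}: handle positive semidefinite $f$ of degree $2n$ whose homogenization $F$ defines a smooth hypersurface. The second step invokes verbatim the specialization argument of Theorem \ref{main}, with $d(n)$ replaced by $2n$, to pass from the smooth case to arbitrary $f$ of degree $\leq 2n$.

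For the generic step, I would set $Y := \{Z^2+F=0\}$, which is smooth with $Y(\R)=\varnothing$ by Lemma \ref{rev}. The anticanonical bundle is $-K_Y = \mathcal{O}_{\mathbb{P}^n_{\R}}(n+1-d/2) = \mathcal{O}_{\mathbb{P}^n_{\R}}(1)$, which remains ample, so $Y_{\C}$ is Fano and hence rationally connected; in particular $\CH_0(Y_{\C}) = \Z$ is supported on a point. Following the chain of Proposition \ref{sumlevel}, Proposition \ref{levelnr}, and the converse direction of Proposition \ref{nn+1} (available thanks to rational connectedness), it suffices to show that $\omega^{n+1} \in H^{n+1}(Y, \mathbb{Z}_2(n+1))$ has coniveau $\geq 2$. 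Since $n$ is odd, Corollary \ref{coniveaulien} applies, and this reduces to $\alpha \in H^{n+1}(Y_{\C}, \mathbb{Z}_2)$ having coniveau $\geq 2$, which is precisely the content of a positive answer to Question \ref{qconiveau}.

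For the extension to arbitrary $f$ of degree $\leq 2n$, I would copy the specialization argument in the proof of Theorem \ref{main} with the value $d(n)$ replaced by $2n$: form $g := f + t(1 + \sum_{i=1}^n X_i^{2n}) \in \R(t)[X_1,\dots,X_n]$, whose homogenization defines a smooth hypersurface because that of its specialization $1 + \sum X_i^{2n}$ at $t=0$ already does. Pass to the real closed extension $\s := \bigcup_r \R((t^{1/r}))$, in which $t$ is a square, so that $g$ is still positive semidefinite over $\s$. Applying the generic case just established (over $\s$) yields a decomposition $g = \sum_{i=1}^{2^n-1} h_i^2$, and reducing modulo a valuation on $\s(X_1,\dots,X_n)$ extending the $t$-adic valuation, trivial on $\R(X_1,\dots,X_n)$ and with residue field $\R(X_1,\dots,X_n)$, produces the desired identity for $f$ (using the standard minimum-valuation trick to exclude negative valuations among the $h_i$).

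The argument introduces no genuinely new ideas beyond those of the paper: the conditional hypothesis is set up precisely to feed into Corollary \ref{coniveaulien}, and everything else is a direct transcription of arguments already developed in Sections \ref{s3} and \ref{s6}. The only real ``obstacle'' is isolated into Question \ref{qconiveau} itself, which Section \ref{s5} resolves unconditionally in the cases $n=3$ and $n=5$.
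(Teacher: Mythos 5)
Your proposal is correct and matches the paper's intended argument exactly: the paper presents Proposition \ref{conditional} as a direct byproduct of the proof of Theorem \ref{main}, obtained by running Proposition \ref{proofgeneric} with $d=2n$ (so $-K_Y=\mathcal{O}(1)$ is still ample and $Y_{\C}$ rationally connected), feeding the positive answer to Question \ref{qconiveau} into Corollary \ref{coniveaulien}, and then repeating the specialization argument verbatim. No gaps.
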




\bibliographystyle{plain}
\bibliography{biblio}

\end{document}